\documentclass[b5paper,10pt,twoside,english,draft]{amsart}
\usepackage[utf8]{inputenc}
\usepackage{mathrsfs}
\usepackage{hyperref}
\usepackage[final]{microtype}

\usepackage[utf8]{inputenc}
\usepackage{amsthm}
\usepackage{amsmath}
\usepackage{eucal}
\usepackage{enumitem}
\usepackage{tikz}
\usepackage[sort,nocompress]{cite}
\usetikzlibrary{matrix,arrows,decorations.pathmorphing}
\usepackage{tikz-cd}
\usepackage{tensor}
\usepackage{enumitem}
\usepackage{mathtools}
\usepackage{amssymb}
\usetikzlibrary{shapes}
\DeclareMathOperator{\add}{\mathsf{add}}



\newcommand\Tcong{\mathrel{\overset{\makebox[0pt]{\mbox{\tiny $T$}}}{\cong}}}
\newcommand\Scong{\mathrel{\overset{\makebox[0pt]{\mbox{\tiny $S$}}}{\cong}}}

\DeclareMathOperator{\bounded}{\mathsf b}

\DeclareMathOperator{\id}{\mathsf{id}}

\DeclareMathOperator{\res}{\mathsf{res}}

\DeclareMathOperator{\op}{\mathsf{op}}

\DeclareMathOperator{\cc}{\mathsf c}

\DeclareMathOperator{\RR}{\mathsf R}

\DeclareMathOperator{\X}{\mathsf X}
\DeclareMathOperator{\Y}{\mathsf Y}
\DeclareMathOperator{\T}{\mathsf T}
\DeclareMathOperator{\SSS}{\mathsf S}
\DeclareMathOperator{\D}{\mathsf D}

\DeclareMathOperator{\A}{\mathsf A}

\DeclareMathOperator{\B}{\mathsf B}

\DeclareMathOperator{\HH}{\mathsf H}
\DeclareMathOperator{\C}{\mathsf C}
\DeclareMathOperator{\K}{\mathsf K}
\DeclareMathOperator{\R}{\mathsf R}
\DeclareMathOperator{\LL}{\mathsf L}

\DeclareMathOperator{\Prod}{\mathsf{Prod}}
\DeclareMathOperator{\Add}{\mathsf{Add}}
\DeclareMathOperator{\maxx}{\mathsf{max}}

\DeclareMathOperator{\Ker}{\mathsf{Ker}}

\DeclareMathOperator{\Cok}{\mathsf{Cok}}
\DeclareMathOperator{\modules}{\mathsf{mod}}
\DeclareMathOperator{\Modules}{\mathsf{Mod}}
\DeclareMathOperator{\Hom}{\mathsf{Hom}}

\DeclareMathOperator{\HOM}{\mathcal{H}\mathsf{om}}
\DeclareMathOperator{\RHom}{\mathsf{\mathbb{R}Hom}}
\DeclareMathOperator{\colim}{\mathsf{colim}}
\DeclareMathOperator{\limit}{\mathsf{lim}}
\DeclareMathOperator{\holim}{\mathsf{holim}}
\DeclareMathOperator{\hocolim}{\mathsf{hocolim}}
\DeclareMathOperator{\Gstable}{\underline{\mathsf{Gproj}}}
\DeclareMathOperator{\GStable}{\underline{\mathsf{GProj}}}
\DeclareMathOperator{\Gcostable}{\overline{\mathsf{Ginj}}}
\DeclareMathOperator{\GcoStable}{\overline{\mathsf{GInj}}}

\DeclareMathOperator{\Ktac}{\mathsf K_{\mathsf{tac}}}

\DeclareMathOperator{\pdim}{\mathsf{pdim}}
\DeclareMathOperator{\Ext}{\mathsf{Ext}}
\DeclareMathOperator{\Tor}{\mathsf{Tor}}

\DeclareMathOperator{\proj}{\mathsf{proj}}
\DeclareMathOperator{\Proj}{\mathsf{Proj}}
\DeclareMathOperator{\inj}{\mathsf{inj}}
\DeclareMathOperator{\Inj}{\mathsf{Inj}}

\DeclareMathOperator{\Db}{\mathsf D^{\mathsf b}}
\DeclareMathOperator{\Dsg}{\mathsf D_{\mathsf{sg}}}

\DeclareMathOperator{\ac}{\mathsf {ac}}

\DeclareMathOperator{\tot}{\mathsf{tot}}
\DeclareMathOperator{\perf}{\mathsf{perf}}

\DeclareMathOperator{\rad}{\mathsf{rad}}
\DeclareMathOperator{\thick}{\mathsf{thick}}
\DeclareMathOperator{\coac}{\mathsf{coac}}

\DeclareMathOperator{\Lambdae}{\Lambda^\mathsf e}

\DeclareMathOperator{\Gproj}{\mathsf{Gproj}}
\DeclareMathOperator{\GProj}{\mathsf{GProj}}
\DeclareMathOperator{\Ginj}{\mathsf{Ginj}}
\DeclareMathOperator{\GInj}{\mathsf{GInj}}

\DeclareMathOperator{\Cone}{\mathsf{Cone}}

\DeclareMathOperator{\pd}{\mathsf{pd}}

\DeclareMathOperator{\coh}{\mathsf{coh}}
\DeclareMathOperator{\Ab}{\mathsf{Ab}}

\numberwithin{equation}{section}
\newtheorem{theorem}{Theorem}[section]
\newtheorem{lemma}[theorem]{Lemma}
\newtheorem{corollary}[theorem]{Corollary}
\newtheorem{proposition}[theorem]{Proposition}
\newtheorem{observation}[theorem]{Observation}
\newtheorem{definition}[theorem]{Definition}
\newtheorem*{theorem*}{Theorem}
\newtheorem*{theorem1}{Theorem I}
\newtheorem*{theorem2}{Theorem II}
\newtheorem*{theorem3}{Theorem III}
\newtheorem*{lemmaappendix1}{Lemma~\ref{lem:compgenff}}
\newtheorem*{lemmaappendix2}{Lemma~\ref{lem:naturalmonosplitartin}}
\newtheorem*{proposition*}{Proposition}
\newtheorem*{definition*}{Definition}
\newtheorem*{lemma*}{Lemma}
\theoremstyle{definition}
\newtheorem{example}[theorem]{Example}

\newtheorem*{construction*}{Construction}
\newtheorem{remark}[theorem]{Remark}

\usepackage[colorinlistoftodos]{todonotes}
\usepackage{hyperref}
\usepackage{color}
\usepackage{cite}

\begin{document}

\title{Change of rings and singularity categories}

\author[Oppermann, Psaroudakis and Stai]{Steffen Oppermann, Chrysostomos Psaroudakis and Torkil Stai}
\address{Institutt for matematiske fag, NTNU, 7491 Trondheim, Norway}
\email{steffen.oppermann@ntnu.no}
\email{torkil.stai@ntnu.no}
\address{Institute of Algebra and Number Theory, University of Stuttgart, Pfaffenwaldring 57, 70569 Stuttgart, Germany}
\email{chrysostomos.psaroudakis@mathematik.uni-stuttgart.de}
\thanks{The first and third named authors are supported by Norges forskningsr{\aa}d (grant $250056$). The second named author is supported by Deutsche Forschungsgemeinschaft (DFG, grant KO $1281/14-1$).}
\keywords{Gorenstein projective module, Singularity category, Change of rings, Homotopy category, Acyclic and coacyclic complex, Adjoint functor, $0$-cocompact object, t-Structure, Contravariantly finite subcategory}
\subjclass[2010]{16E30;16E35;16E65;16G;16G50;18E30}

\dedicatory{Dedicated to the memory of Ragnar-Olaf Buchweitz} 

\begin{abstract}
We investigate the behavior of singularity categories and stable categories of Gorenstein projective modules along a morphism of rings. The natural context to approach the problem is via change of rings, that is, the classical adjoint triple between the module categories. In particular, we identify conditions on the change of rings to induce functors between the two singularity categories or the two stable categories of Gorenstein projective modules. Moreover, we study this problem at the level of `big singularity categories' in the sense of Krause~\cite{MR2157133}.
Along the way we establish an explicit construction of a right adjoint functor between certain homotopy categories. This is achieved by introducing the notion of $0$-cocompact objects in triangulated categories and proving a dual version of Bousfield's localization lemma. We provide applications and examples illustrating our main results.
\end{abstract}

\maketitle
\setcounter{tocdepth}{1} \tableofcontents

\section{Introduction}

\subsection*{Singularity categories via morphisms of rings}

The singularity category of a noetherian ring $R$, introduced by Buchweitz in his unpublished manuscript \cite{buchweitz1986maximal} as the Verdier quotient $\Dsg(R)=\Db(\modules R)/\perf R$, is by now a celebrated invariant. This category vanishes precisely when $R$ has finite global dimension and, in a sense, describes how far $R$ is from being regular. Indeed, if $R$ is commutative then $\Dsg(R)$ is a categorical measure for the complexity of the singularities of the spectrum of $R$. It should be remarked that Orlov \cite{MR2101296} later considered $\Db(\coh \mathbb X)/\perf \mathbb X$ for an algebraic variety $\mathbb X$ not only in order to understand the singularities of $\mathbb X$, but also to provide new insight into Kontsevich's homological mirror symmetry conjecture \cite{MR1403918}. Denoting by $\Gstable R$ the stable category of finitely generated Gorenstein projective $R$-modules, there is a natural triangle functor $\Gstable R \hookrightarrow \Dsg(R)$ which is always fully faithful. By a famous theorem due to Buchweitz \cite{buchweitz1986maximal}, obtained independently by Happel in \cite[Theorem~4.6]{MR1112170}, this is even a triangle equivalence provided $R$ is Gorenstein. Notice that this is a more general version of the well known equivalence between the singularity category and the stable module category of a selfinjective algebra, due to Rickard \cite[Theorem~2.1]{MR1027750}.

The notion of singular equivalence of finite dimensional algebras has recently attracted much attention. In particular, Chen \cite{MR2790881,MR2501179,MR2880676,MR3490659} has investigated when certain extensions of rings have equivalent singularity categories. In \cite{MR3274657} this topic was studied from the point of view of recollements. Explicitly, each idempotent element $e$ in $R$ gives rise to a recollement of module categories
\[
\modules R/ \langle e\rangle  \hookrightarrow \modules R \xrightarrow{\pi} \modules e R e,
\]
and the authors of that paper gave necessary and sufficient conditions for the quotient functor $\pi$ to induce a singular equivalence. Moreover, Chen \cite{MR3209319} investigated what is happening in the left hand part of the diagram.

It is thus natural to explore singularity categories and stable categories of Gorenstein projective modules along a more general morphism of rings. Such an $f\colon \Lambda \to \Gamma$ gives rise to change of rings via the classical adjoint triple
\begin{center}
\begin{tikzpicture}
      \matrix(a)[matrix of math nodes,
      row sep=2.5em, column sep=4em,
      text height=1.5ex, text depth=0.25ex]
      { \Modules \Lambda & \Modules \Gamma.  \\};
      \path[->,font=\scriptsize](a-1-2) edge node[above]{$\res$} (a-1-1);
      \path[->,bend left, font=\scriptsize](a-1-1) edge node[above]{$- \otimes_{\Lambda} \Gamma$} (a-1-2);
      \path[->,bend right, font=\scriptsize](a-1-1) edge node[below]{$\Hom_{\Lambda}(\Gamma, -)$} (a-1-2);
\end{tikzpicture}
\end{center}
If moreover the two rings are module finite over a commutative noetherian ring --- i.e.\ they are `noetherian algebras' --- then these functors restrict to an adjoint triple on the level of finitely generated modules. It is our aim to understand when these functors induce functors on the levels of singularity categories and stable categories of Gorenstein projective modules, and to investigate what kind of properties these induced functors have. Our main results in this direction are summarized below.

\begin{theorem1}
   Let $f\colon \Lambda\to\Gamma$ be a morphism of noetherian algebras such that the projective dimension of $\Gamma$ is finite both as left and as right $\Lambda$-module. Then the following hold.
   \begin{enumerate}[label=\emph{(\roman*)}]
      \item We have the solid part of the commutative diagram
      \[ \begin{tikzpicture}
          \matrix(a)[matrix of math nodes,row sep=2.5em, column sep=4em,text height=1.5ex, text depth=0.25ex]
            { \Gstable\Lambda & \Gstable\Gamma  \\
            \Dsg(\Lambda) & \Dsg(\Gamma) \\ };
            \draw[right hook->,shorten <=.3em] (a-1-1) to (a-2-1);
            \draw[right hook->,shorten <=.3em] (a-1-2) to (a-2-2);
            \draw[->,bend left=30] (a-1-1) to node [above] {\scriptsize $- \otimes_{\Lambda} \Gamma$} (a-1-2);
            \draw[->, dashed] (a-1-2) to node[above]{\scriptsize $\res$} (a-1-1);
            \draw[->,bend left=30] (a-2-1) to node [above] {\scriptsize $- \otimes^{\mathbb{L}}_{\Lambda} \Gamma$} (a-2-2);
            \draw[->] (a-2-2) to  node [above] {\scriptsize $\res$} (a-2-1);
            \draw[->, dashed, bend right=30] (a-2-1) to node [below] {\scriptsize $\RHom_{\Lambda}(\Gamma, -)$} (a-2-2);
         \end{tikzpicture} \]
      where the two functors on the bottom level form an adjoint pair.
      \item If moreover $\RHom_{\Lambda}(\Gamma, \Lambda)$ belongs to $\perf \Gamma$ then $\res$ restricts to a functor between stable categories of finitely generated Gorenstein projective modules, and it has a right adjoint $\RHom_{\Lambda}(\Gamma, -)$ on the level of singularity categories, as indicated by the dashed arrows.
      \item If moreover $\Lambda$ is module finite over some regular ring and $\Cone(f)$ belongs to $\perf \Lambdae$, then the functor $-\otimes_\Lambda^{\mathbb L}\Gamma$, hence also $-\otimes_\Lambda\Gamma$, is fully faithful. In particular, in this case the pair $\left(\Dsg(\Lambda),\Ker(\res_{\Dsg}) \right)$is a stable $t$-structure in $\Dsg(\Gamma)$ which restricts to a stable $t$-structure $\left(\Gstable\Lambda, \Ker(\res)\right)$ in $\Gstable\Gamma$ when the condition in \textnormal{(ii)} is met.
   \end{enumerate}
\end{theorem1}

The above theorem is a summary of results of Section~\ref{section:singularitycategories}. We remark that (iii) immediately gives a generalization of the main result in \cite{MR3209319}, by choosing $\Gamma$ to be the quotient of $\Lambda$ by a homological ideal of finite projective dimension as bimodule.

\subsection*{Homotopy categories of big modules}

Often it is necessary to consider all modules, not just finitely generated ones. Also in the realm of singularity categories, `big' categories --- i.e.\ triangulated categories which admit small coproducts --- give a different perspective.

In this case, the category one considers is the homotopy category of acyclic complexes of injective modules, $\K_{\ac}(\Inj \Lambda)$, which should be considered the natural `big singularity category' since it is compactly generated by $\Dsg(\Lambda)$ --- see Krause \cite{MR2157133} and Theorem~\ref{thm:compactcompletions} below. This big singularity category plays a key role in recent developments in the theory of support varieties. In particular, if $R$ is locally a hypersurface ring, then by \cite{MR3234500} the lattice of (compactly generated) localizing subcategories of $\K_{\ac}(\Inj R)$ is isomorphic to the lattice of (specialization closed) subsets of the singular locus of $R$.

As a big version of the stable category of Gorenstein projective modules, it seems natural to consider the homotopy category of totally acyclic complexes of projective (or injective) modules. However, unfortunately, the connection here is not as good as one might have hoped: While the Gorenstein projective modules are always compact in $\Ktac(\Proj \Lambda)$, they only generate this category if the algebra is virtually Gorenstein --- a class of algebras introduced by Beligiannis--Reiten\cite{MR2327478}.

For these big versions of the categories in Theorem~I we obtain the following result, which is an excerpt of Section~\ref{sec:bigcat} --- see in particular the Summary at the end of the section where we also provide a version in terms of the homotopy categories of coacyclic and totally acyclic complexes of projective modules.

\begin{theorem2}
Let $f\colon \Lambda\to\Gamma$ be a morphism of noetherian algebras such that the projective dimension of $\Gamma$ is finite both as left and as right $\Lambda$-module.
   \begin{enumerate}[label=\emph{(\roman*)}]
      \item There are adjunctions as indicated by the solid arrows in the following diagram.
      \[ \begin{tikzpicture}
       \node (KIA) at (-1,1) {$\K_{\ac}(\Inj \Lambda)$};
       \node (KIB) at (4,1) {$\K_{\ac}(\Inj \Gamma)$};
       \draw [->,bend left=30] (KIA) to (KIB);
       \draw [->,bend right=15] (KIB) to node [above=-.5mm] {$\scriptstyle \lambda \res$} (KIA);
       \draw [->] (KIA) to node [above=-1mm] {$\scriptstyle \Hom_{\Lambda}(\Gamma, -)$} (KIB);
       \draw [->,bend left=15] (KIB) to (KIA);
       \draw [->,bend right=30,dashed] (KIA) to (KIB);
      \end{tikzpicture} \]
      If moreover $\RHom_\Lambda(\Gamma, \Lambda)$ belongs to $\perf\Gamma$, then there is an additional adjoint functor, as indicated by the dashed arrow.
      \item If $\Lambda$ and $\Gamma$ admit dualizing complexes $D_\Lambda$ and $D_\Gamma$, respectively, such that there is an isomorphism $\Hom_\Lambda(\Gamma,D_\Lambda)\cong D_\Gamma$ of complexes of $\Lambda$--$\Gamma$-bimodules, then there are adjunctions as indicated by the solid arrows in the following diagram.
      \[ \begin{tikzpicture}
      \node (KIA) at (-1,1) {$\Ktac(\Inj \Lambda)$};
      \node (KIB) at (4,1) {$\Ktac(\Inj \Gamma)$};
      \draw [->,bend right=15] (KIB) to (KIA);
      \draw [->] (KIA) to node [above=-1mm] {$\scriptstyle \Hom_{\Lambda}(\Gamma, -)$} (KIB);
      \draw [->,bend left=15] (KIB) to (KIA);
      \draw [->,bend right=30,dashed] (KIA) to (KIB);
      \end{tikzpicture} \]
      If moreover $\RHom_\Lambda(\Gamma, \Lambda)$ belongs to $\perf\Gamma$, then there is an additional adjoint functor, as indicated by the dashed arrow.
      \item If moreover the conditions of Theorem~I \textnormal{(iii)} are met, then both occurences of $\Hom_\Lambda(\Gamma,-)$ are fully faithful. In this case the pair $\left(\K_{\ac}(\Inj \Lambda),\Ker(\lambda\res) \right)$ is a stable $t$-structure in $\K_{\ac}(\Inj \Gamma)$ which also restricts to the level of totally acyclic complexes.
   \end{enumerate}
\end{theorem2}

Here $\lambda$ denotes injective resolutions, which become necessary since the restriction of a complex of injectives need not consist of injectives any more. While we do not unveil the precise constructions of all the functors above in this introduction, it is worth mentioning that they are all given completely constructively, at least under the hypotheses of (ii).

\subsection*{Adjoint functors}

In some cases the mere existence of an adjoint functor has important consequences. For instance, the right adjoint of the inclusion functor $\Ktac(\Proj R)\hookrightarrow \K(\Proj R)$, which exists under rather weak restrictions on $R$, was used by Jørgensen \cite{MR2283103} in order to establish the contravariant finiteness of the Gorenstein projective modules in $\Modules R$. In Corollary~\ref{cor:GProjcontfinite} we continue in this direction and add to the class of rings over which such right approximations exist.

On the other hand, for the purpose of doing actual computations, learning from formal arguments that a functor exists is often not satisfactory. In the age of Brown representability, due to Neeman \cite{MR1812507}, such scenarios seem to arise quite often. In certain cases, however, Bousfield's `localization lemma' \cite{MR543337,MR728454} --- as presented by Neeman \cite[Lemma 1.7]{MR1191736} --- tells us precisely what a wealth of left adjoint functors actually look like. To apply his result it suffices to assume that $\T$ is a triangulated category admitting coproducts. The lemma asserts that if $\X$ is a set of compact objects in $\T$, then the inclusion $\X^{\perp} \hookrightarrow \T$ admits a left adjoint and, most remarkably, that the latter is explicitly given by assigning to each object $T$ of $\T$ the homotopy colimit of the cones of iterated right $\Add \X$-approximations of $T$. Such a good grasp on this functor has been proven most useful, e.g.\ in \cite{MR1191736} where massive generalizations of results from algebraic geometry were obtained.

We provide the following dual of Bousfield's lemma. Since non-zero cocompact objects rarely exist in the categories we typically consider, we relax the assumption on the set of objects $\X$ to a notion which we call $0$-cocompactness in order to get an interesting result. 

\begin{theorem3}[Theorem~\ref{theorem.t_from_left_perp}]
Let $\T$ be a triangulated category which admits products. Let $\X$ be a set of $0$-cocompact objects in $\T$  
Then
\[ \left(\prescript{\perp}{}{\X}, (\prescript{\perp}{}{\X})^{\perp}\right) \]
is a stable $t$-structure in $\T$.
\end{theorem3}

For the sake of brevity the reader is referred to Section~\ref{section:rightadjoint} for the precise definition of $0$-cocompactness and a detailed account of the construction itself. 
Here ``$\perp$'' refers to all extensions vanishing.

It should be noted that the above stable $t$-structure provides us with a functorial triangle $T_{\prescript{\perp}{}{\X}} \to T \to T_{\X} \to T_{\prescript{\perp}{}{\X}}[1]$, where the objects $T_{\prescript{\perp}{}{\X}}$ in $\prescript{\perp}{}{\X}$ and $T_{\X}$ in $(\prescript{\perp}{}{\X})^{\perp}$, as well as the maps, are explicitly given by a construction dual to the one of Bousfield. We emphasize that a set of $0$-cocompact objects in a triangulated category cogenerates a stable $t$-structure, in contrast to Bousfield's case. 

We moreover show that the hypotheses of Theorem~III are satisfied in several cases of interest. In particular, we provide explicit descriptions of the right adjoint functors to the following inclusions --- see Corollary~\ref{prop:explicitRforArtin}.
\begin{align*}
& \K_{\ac}(\Inj \Lambda) \hookrightarrow \K(\Inj \Lambda); && \Ktac(\Inj \Lambda) \hookrightarrow \K(\Inj \Lambda);  \\
& \K_{\coac}(\Proj \Lambda) \hookrightarrow \K(\Proj \Lambda); && \Ktac(\Proj \Lambda) \hookrightarrow \K(\Proj \Lambda).
\end{align*}

\subsection*{Outline} Section~\ref{section:preliminaries} recalls and to some extent introduces notions and results that will be employed at later stages. This section consists of seven short subsections. In the first subsection we recall well known concepts and results from the theory of triangle functors and adjoints, as well as homotopy limits and colimits. Our first contributions appear in Subsection~\ref{subsection:projres}, where we provide a description of the right adjoint of the inclusion functor $\K(\Proj R) \hookrightarrow \K(\Modules R)$ and a description of the left adjoint of the embedding $\K(\Inj R) \hookrightarrow \K(\Modules R)$. In Subsection~\ref{subDualizingcomplexes} we discuss dualizing complexes and recall a result of Iyengar--Krause stating that a dualizing complex for a noetherian ring $R$ induces an equivalence between $\K(\Proj R)$ and $\K(\Inj R)$. In Proposition~\ref{prop duality commutes} we show that this equivalence interacts nicely with change of rings. Subsection~\ref{subsection:(co)acyclicity} is devoted to acyclic, coacyclic and totally acyclic complexes: For an Artin algebra $\Lambda$ we show in Proposition~\ref{prop:leftperpforartin} how the subcategories $\K_{\ac}(\Proj \Lambda)$ and $\K_{\coac}(\Proj \Lambda)$ of $\K(\Proj\Lambda)$ can be described as certain left perpendicular classes, and obtain similar descriptions of the subcategories $\K_{\ac}(\Inj \Lambda)$ and $\K_{\coac}(\Inj \Lambda)$ in $\K(\Inj\Lambda)$. In Subsection~\ref{subGorHomAlg} we recall several aspects of Gorenstein homological algebra that are used throughout the paper. In Subsection~\ref{subsection:compactcompletions} we recall in Theorem~\ref{thm:compactcompletions} that if $R$ is a noetherian ring, then the bounded derived category $\Db(\modules R)$ and the singularity category $\Dsg(R)$ can be realized as the subcategories of compact objects in certain compactly generated triangulated categories --- a result due to Krause \cite{MR2157133}. Under additional assumptions on $R$, we show a similar statement for the stable category of Gorenstein projective modules $\Gstable R$. Finally, in Subsection~\ref{lastsubsection} we discuss contravariantly finite subcategories and torsion pairs. Moreover, we recall in Theorem~\ref{thm:jorgensenGProjcontfinite} a result of Jørgensen\cite{MR1859047, MR2283103} providing sufficient conditions on a ring $R$ so that the subcategory of Gorenstein projective modules $\GProj R$ is contravariantly finite in $\Modules R$.

In Section~\ref{section:singularitycategories} we show Theorem~I above by breaking the proof down into several smaller and managable steps, and provide examples that illustrate the result. The claims (i) and (ii) are Theorem~\ref{thm finite main}, while (iii) is Proposition~\ref{prop:coneperfectimpliesfullyfaithful} and Corollary~\ref{cor:twotorsionpair} where we view $f$ as a morphism of bimodules over $\Lambda$. As a byproduct of Theorem~I, we obtain the main result of Chen \cite{MR3209319} as the case $f\colon\Lambda \to \Lambda/I$ for $\Lambda$ a finite dimensional algebra with a homological ideal $I$ of finite projective dimension as bimodule --- see Corollary~\ref{chen'sresult}. As a further application, we show for certain trivial extensions $\Gamma = \Lambda \ltimes X$ that there are fully faithful functors $-\otimes_{\Lambda}^{\mathbb L}\Gamma\colon\Dsg(\Lambda)\to\Dsg(\Gamma)$ and $-\otimes_{\Lambda}\Gamma\colon\Gstable \Lambda \to \Gstable \Gamma$ --- see Corollary~\ref{cor.trivialext}. The results of this section are illustrated with two examples --- see Example~\ref{firstexample} and Example~\ref{secondexample}.

In Section~\ref{sec:bigcat} we extend the functors of Theorem~I to certain homotopy categories of projective and of injective modules. Adjunctions turn out to be plentiful and Theorem~II above reflects only a part of the full picture. In particular, we show that Theorem~I is in fact the restriction of a picture --- see Diagram~\ref{bigpicture} in the Summary of Section~\ref{sec:bigcat} --- that exists on the level of certain homotopy categories, where the singularity category as well as the stable category of Gorenstein projective modules serve as subcategories of compact objects. The claim (i) in Theorem~II is Proposition~\ref{propfivetupleadj} and Proposition~\ref{prop restriction to acyclic}, (ii) is Proposition~\ref{prop:restrictiontotac}, while (iii) is Corollary~\ref{cor:fullyfaithfulonKcoac} and~\ref{corsecondff}. Moreover, we obtain `big' versions of some of the applications of Section~\ref{section:singularitycategories} --- see Corollary~\ref{cor:fullyfaithfulonKcoac} and Corollary~\ref{corsecondff}.

Section~\ref{section approximationsofmodules} deals with right approximations and shows how, for any ring $R$, a contravariantly finite subcategory of $\K(\Proj R)$ induces one in $\Modules R$ --- see Proposition~\ref{prop:contfiniteboundaries}. As a consequence, we slightly expand Jørgensen's result \cite{MR2283103} by showing that the category of Gorenstein projective modules is contravariantly finite in $\Modules R$ as long as $R$ is a noetherian ring with a dualizing complex --- see Corollary~\ref{cor:GProjcontfinite}.

The final Section~\ref{section:rightadjoint} contains a proof of Theorem~III and demonstrates how this result provides explicit descriptions of several functors that appear at earlier stages of the paper. In particular, Theorem~III completes the explicit descriptions of the remaining functors of Diagram~\ref{bigpicture} --- see Corollary~\ref{prop:explicitRforArtin}. 
We also discuss our notion of a $0$-cocompact object in a triangulated category and, in particular, show in Corollary~\ref{corollary.artinina_0-cocompact} that if $\Lambda$ is an Artin algebra, then each finite complex of finitely generated $\Lambda$-modules is $0$-cocompact in the homotopy category $\K(\Modules \Lambda)$.

\subsection*{Notation} By a \emph{noetherian algebra} we mean a ring which is module finite over some commutative noetherian ring. By a module we mean a right module. When $\A$ is an additive category and $\X$ is a class of objects in $\A$, the left and right \emph{orthogonal classes} of $\X$ are the subcategories
\[
{}^{\perp}\X = \{A\in\A \vert \A(A,\X)=0\} \text{ and } \X^{\perp} = \{A\in\A \vert \A(\X,A)=0\},
\]
respectively. When $\T$ is a triangulated category and $\X$ is a class of objects in $\T$, we write
\[
{}^{\perp}\X = \{T\in\T \vert \T(T,\X[i])=0 \text{ $\forall$ } i\} \text{ and } \X^{\perp} = \{T\in\T \vert \T(\X[i],T)=0\text{ $\forall$ } i\}.
\]
If $\A$ has coproducts, then $\Add \X$ is the subcategory consisting of summands of coproducts of objects in $\X$. Dually, if $\A$ has products, then $\Prod \X $ denotes the subcategory of summands of products of objects in $\X$.


\section{Preliminaries} \label{section:preliminaries}
\subsection{Triangle functors and adjoints} Let us collect a few facts about functors between triangulated categories that will be employed in the sequel.

\begin{lemma}[\textnormal{Orlov \cite[Lemma~1.2]{MR2101296}}]
\label{lemadjointquotient}
Let $F\colon \T\to \T'$ be an exact functor between triangulated categories with right adjoint $G$. Let $\X$ and $\X'$ be triangulated subcategories of $\T$ and $\T'$, respectively, satisfying $F(\X)\subseteq \X'$ and $G(\X')\subseteq \X$. Then the induced functors $\overline{F}\colon \T/\X\to \T'/\X'$ and $\overline{G}\colon \T'/\X'\to \T/\X$ form an adjoint pair $(\overline{F}, \overline{G})$. Moreover, if $F$ is fully faithful, then so is $\overline{F}$.
\end{lemma}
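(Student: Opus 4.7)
The plan is to use the universal property of Verdier localization to produce both functors, and then push the unit and counit of the original adjunction down to the quotients. First I would let $\Q\colon\T\to\T/\X$ and $\Q'\colon\T'\to\T'/\X'$ denote the respective Verdier quotient functors. Since $F$ is exact and satisfies $F(\X)\subseteq\X'$, the composition $\Q'\circ F$ sends every morphism of $\T$ whose cone lies in $\X$ to an isomorphism in $\T'/\X'$. The universal property of the Verdier quotient therefore yields a unique exact functor $\overline{F}\colon\T/\X\to\T'/\X'$ such that $\overline{F}\circ\Q=\Q'\circ F$. The dual argument, using $G(\X')\subseteq\X$, produces an exact functor $\overline{G}\colon\T'/\X'\to\T/\X$ with $\overline{G}\circ\Q'=\Q\circ G$.

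For the adjunction I would push the unit $\eta\colon\id_\T\to GF$ and counit $\varepsilon\colon FG\to\id_{\T'}$ of the original adjunction through the quotient functors. Concretely, identifying objects of the quotients with objects of $\T$ respectively $\T'$, I set $\overline{\eta}_A:=\Q(\eta_A)$ and $\overline{\varepsilon}_B:=\Q'(\varepsilon_B)$. Naturality of $\overline{\eta}$ with respect to morphisms coming from $\T$ is inherited directly from naturality of $\eta$; for a general morphism of $\T/\X$, written as a roof in the Verdier calculus of fractions, naturality follows by applying naturality of $\eta$ to each leg of the roof. This bookkeeping, together with the dual argument for $\overline{\varepsilon}$, is the one place in the proof where a little care is required. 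Once naturality is in place, the triangle identities for $(\overline{F},\overline{G},\overline{\eta},\overline{\varepsilon})$ are immediate from applying the quotient functors to the triangle identities for $(F,G,\eta,\varepsilon)$, since $\Q$ and $\Q'$ respect composition.

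Finally, if $F$ is fully faithful then $\eta$ is a natural isomorphism, so $\overline{\eta}=\Q(\eta)$ is a natural isomorphism in $\T/\X$, and this is equivalent to $\overline{F}$ being fully faithful. Beyond these formal manipulations the argument presents no real obstacle; as noted above, the only mildly delicate point is checking naturality of the induced unit and counit at those morphisms of the quotient that are not in the image of $\Q$ or $\Q'$.
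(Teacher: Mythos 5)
The paper does not prove this lemma; it cites it to Orlov \cite[Lemma~1.2]{MR2101296} and takes it as known, so there is no ``paper's own proof'' to compare against. Your argument is the standard one and it is correct: the universal property of the Verdier quotient produces $\overline{F}$ and $\overline{G}$ from the containments $F(\X)\subseteq\X'$ and $G(\X')\subseteq\X$; the unit and counit descend since every object of $\T/\X$ (resp.\ $\T'/\X'$) lies in the image of $\Q$ (resp.\ $\Q'$), so one may define $\overline{\eta}_{\Q A}=\Q(\eta_A)$ and $\overline{\varepsilon}_{\Q'B}=\Q'(\varepsilon_B)$; naturality on a general roof $A\xleftarrow{s}C\xrightarrow{f}B$ follows from the two naturality squares for $s$ and $f$ together with exactness of $\overline{G}\,\overline{F}$ (which preserves the isomorphism $\Q(s)$); the triangle identities are obtained by applying $\Q$ and $\Q'$ to the originals; and $\overline{F}$ fully faithful follows from $\Q$ preserving the isomorphism $\eta$. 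The one point you flag as delicate — naturality at roofs — is indeed the only place requiring a computation, and your sketch of it is the right one. No gaps.
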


We now need to recall a few key notions. Suppose $\T$ is a triangulated category admitting coproducts. A triangulated subcategory of $\T$ is called \emph{thick} if it is closed under direct summands, and \emph{localizing} if it is closed under coproducts. It is not hard to show that a localizing subcategory is automatically thick. An object $C$ in $\T$ is called \emph{compact} if the functor $\T(C,-)$ preserves coproducts, i.e.\ if any morphism $C\to\coprod T_i$ factors through a finite subcoproduct. $\T$ is \emph{compactly generated} if it admits a set $\C$ of compact objects that generate $\T$ in the sense that $\T(\C,X)=0$ implies $X=0$. In this case $\T$ coincides with the smallest localizing subcategory containing $\C$. Moreover, the subcategory $\T^{\cc}$ of all compact objects in $\T$ coincides with the smallest thick subcategory containing $\C$.

The following theorem is a consequence of Brown representability for compactly generated triangulated categories \cite[Theorem 8.6.1]{MR1812507}.

\begin{theorem} [{{Neeman \cite[Theorems~4.1 and 5.1]{MR1308405} and \cite[Theorem~8.6.1]{MR1812507}}}] \label{thm:neemanthreeadjoints}
Suppose $F\colon \T\to\T'$ is an exact functor between triangulated categories with $\T$ compactly generated.
\begin{enumerate}[label=\emph{(\roman*)}]
\item $F$ admits a right adjoint if and only if it preserves coproducts.
\item $F$ admits a left adjoint if and only if it preserves products.
\item If $F$ admits a right adjoint $G$, then $F$ preserves compact objects if and only if $G$ preserves coproducts.
\end{enumerate}
\end{theorem}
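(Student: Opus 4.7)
The plan is to deduce all three statements from Brown representability together with standard facts about adjoint functors.

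For (i), the forward implication is immediate: any functor with a right adjoint preserves whatever colimits exist, and in particular coproducts. Conversely, suppose $F$ preserves coproducts. For each $Y\in\T'$, consider the contravariant functor
\[ H_Y \;=\; \T'\bigl(F(-),Y\bigr)\colon \T^{\mathsf{op}}\longrightarrow \Ab. \]
Since $F$ is exact, $H_Y$ is cohomological, and since $F$ preserves coproducts while $\T'(-,Y)$ turns coproducts into products, $H_Y$ sends coproducts in $\T$ to products in $\Ab$. Brown representability, valid because $\T$ is compactly generated, then supplies an object $G(Y)\in\T$ representing $H_Y$. The universal property upgrades $Y\mapsto G(Y)$ to a functor, and the resulting natural isomorphism $\T'(F(-),-)\cong \T(-,G(-))$ is exactly the adjunction.

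Statement (ii) is formally dual to (i). If $F$ admits a left adjoint then $F$ is itself a right adjoint and therefore preserves all limits, in particular products. For the converse one invokes \emph{Brown representability for the dual}: on a compactly generated triangulated category $\T$, a cohomological functor $K\colon\T\to\Ab$ is corepresentable precisely when it carries products in $\T$ to products in $\Ab$. Applied to $K_{X'}=\T'(X',F(-))$ for each $X'\in\T'$, the hypothesis that $F$ preserves products yields an object $H(X')\in\T$ together with a natural isomorphism $\T'(X',F(-))\cong \T(H(X'),-)$, and this assembles into a left adjoint $H$ of $F$. The main obstacle in this part is precisely the dual version of Brown representability, which in Neeman's framework requires considerably more delicate arguments than the coproduct version used in (i).

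For (iii), assume $F$ admits a right adjoint $G$. Let $C\in\T$ be compact and $(Y_i)$ an arbitrary family in $\T'$. Adjointness identifies the canonical map
\[ \coprod_i \T'\bigl(F(C),Y_i\bigr)\longrightarrow \T'\bigl(F(C),\coprod_i Y_i\bigr) \]
with
\[ \coprod_i \T\bigl(C,G(Y_i)\bigr)\longrightarrow \T\bigl(C,G(\coprod_i Y_i)\bigr), \]
and compactness of $C$ rewrites the source as $\T(C,\coprod_i G(Y_i))$. Hence $F(C)$ is compact for every compact $C$ if and only if, for every family $(Y_i)$, the natural comparison morphism $\coprod_i G(Y_i)\to G(\coprod_i Y_i)$ becomes an isomorphism after applying $\T(C,-)$ for every compact $C\in\T$. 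Since $\T$ is compactly generated, a morphism in $\T$ is an isomorphism precisely when it induces isomorphisms on $\T(C,-)$ for all compact $C$; the condition above therefore holds for every family if and only if $G$ preserves coproducts.
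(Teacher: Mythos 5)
The paper does not supply its own proof of this theorem: it is quoted as a result of Neeman, with the remark that it is a consequence of Brown representability for compactly generated triangulated categories. Your argument is correct and is precisely the standard one behind the cited results --- Brown representability for the representability of $\T'(F(-),Y)$ in part (i), the (harder) dual Brown representability of \cite[Theorem 8.6.1]{MR1812507} for the corepresentability of $\T'(X',F(-))$ in part (ii), and the adjunction-plus-compact-generation comparison for part (iii) --- so it matches the approach the paper points to.
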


The proof of the following observation is a standard dévissage argument and can be found in the Appendix.

\begin{lemma} \label{lem:compgenff}
   Let $\T$ and $\T'$ be triangulated categories with coproducts and suppose $\T$ is compactly generated. Let $F \colon\T\to\T'$ be exact and coproduct preserving. If $F$ restricts to a fully faithful functor $\T^{\cc}\to \T'^{\cc}$, then $F$ is fully faithful.
\end{lemma}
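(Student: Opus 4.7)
The plan is a classical two-step dévissage using the compact generation of $\T$. Fix a set $\C$ of compact generators of $\T$. For objects $X, Y \in \T$ and $n \in \mathbb Z$, write $\eta_{X,Y}^n$ for the map $\T(X, Y[n]) \to \T'(F(X), F(Y)[n])$ induced by $F$. The goal is to show $\eta_{X,Y}^n$ is bijective for all $X, Y, n$; taking $n = 0$ then gives full faithfulness.

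First, I would fix $C \in \T^{\cc}$ and consider the class
\[ \SSS_C = \{Y \in \T \mid \eta_{C,Y}^n \text{ is an isomorphism for all } n\}. \]
Since $F$ is exact and both $\T(C, -)$ and $\T'(F(C), F(-))$ are cohomological, $\SSS_C$ is closed under shifts and triangles. Because $C$ is compact in $\T$, the functor $\T(C, -)$ preserves coproducts; on the other side, $F(C)$ is compact in $\T'$ by hypothesis and $F$ preserves coproducts, so $\T'(F(C), F(-))$ also preserves coproducts. Hence $\SSS_C$ is closed under coproducts, i.e.\ localizing. Full faithfulness of $F$ on $\T^{\cc}$ implies $\T^{\cc} \subseteq \SSS_C$, and in particular $\C \subseteq \SSS_C$. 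Since $\T$ is the smallest localizing subcategory containing $\C$, we conclude $\SSS_C = \T$.

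The second step swaps the roles. Fix $Y \in \T$ and consider
\[ \T_Y = \{X \in \T \mid \eta_{X,Y}^n \text{ is an isomorphism for all } n\}. \]
Again this is closed under shifts and triangles. The contravariant functors $\T(-, Y[n])$ and $\T'(F(-), F(Y)[n])$ turn coproducts into products, and here the latter uses that $F$ preserves coproducts. Therefore $\T_Y$ is closed under coproducts as well. By Step~1, $\T_Y$ contains every compact object, in particular $\C$. Localizing plus containing $\C$ forces $\T_Y = \T$, which completes the proof.

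I do not anticipate a genuine obstacle here: the only points that require any care are the verifications that the two test classes are closed under coproducts, and these hinge exactly on the two hypotheses at hand, namely that $F$ preserves coproducts and that $F$ sends $\T^{\cc}$ into $\T'^{\cc}$ (which is part of the assumption that the restriction $\T^{\cc} \to \T'^{\cc}$ makes sense). Everything else is bookkeeping with cohomological functors on triangulated categories.
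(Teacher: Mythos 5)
Your proof is correct and follows essentially the same two-step dévissage as the paper's own argument: first fix the compact first argument and bootstrap over all second arguments using that both $\T(C,-)$ and $\T'(F(C),F(-))$ preserve coproducts, then fix the second argument and bootstrap over all first arguments using that both $\T(-,Y)$ and $\T'(F(-),F(Y))$ send coproducts to products. The paper spells out the five-lemma diagram for closure under triangles, but the structure and the two test classes you introduce match the paper's $X_*$ and $B^*$ exactly.
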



As the only monomorphisms and epimorphisms in a triangulated category are the ones that are split, limits and colimits rarely exist. In certain cases the following machinery is still useful. Suppose we are given a sequence
\[
T_0 \xrightarrow{t_0} T_1 \xrightarrow{t_1} T_2 \to \cdots
\]
in a triangulated category admitting coproducts, and denote by $t\colon\coprod T_i \to \coprod T_i$ the morphism induced by the $t_i$. The \emph{homotopy colimit} of such a sequence, a notion which originates from algebraic topology and was introduced in algebra through \cite{MR1214458}, is defined by the triangle
\[
\coprod T_i \xrightarrow{1-t} \coprod T_i \to \hocolim T_i \to \left(\coprod T_i \right)\![1].
\]
In particular, taking $\hocolim$ is not in general a functorial procedure, but it does commute with left adjoint functors. For each $T_j$ there is a morphism $\mu_j \colon T_j \to \hocolim T_i$ which is compatible with the $t_i$. It is easy to check that if $t_i$ is invertible for each $i$, then each $\mu_j$ is an isomorphism. Moreover, if each $t_i=0$, then $\hocolim T_i$ vanishes. Using homotopy colimits, it is not difficult to show that a triangulated category with coproducts is automatically idempotent complete. Homotopy colimits also (often) enable us to construct the totalization of a complex in a triangulated category. This should be thought of as an analog of the fundamental notion of the total complex with respect to coproducts of a double complex.

Dually, let $\T$ be a triangulated category with products and consider a sequence
\[
\cdots \xrightarrow{} S_2 \xrightarrow{s_2} S_1 \xrightarrow{s_1} S_0.
\]
The {\em homotopy limit} of this sequence is given by the triangle
\[
\left(\prod S_i\right)\![-1] \to \holim{S_i} \to \prod S_i \xrightarrow{1-s} \prod S_i,
\]
where the $i$-th component of $s$ is the map $S_i\xrightarrow{s_i} S_{i-1}\to \prod S_i$. Note again that the $\holim$ is determined up to a non-unique isomorphism.

\subsection{Projective and injective resolutions}
\label{subsection:projres}

Let $R$ be a ring. The homotopy category $\K(\Proj R)$ is not in general compactly generated, but by \cite{MR2439608} it is always `well generated' which means that it still satisfies Brown representability. Hence the coproduct preserving inclusion $\K(\Proj R) \hookrightarrow \K(\Modules R)$ has a right adjoint, which we will denote by $\rho$.
Similarly, by \cite{MR3212862} the inclusion $\K(\Inj R) \hookrightarrow \K(\Modules R)$ has a left adjoint which we will denote by $\lambda$.

The aim of this subsection is twofold: First we give explicit descriptions of $\rho$ and $\lambda$ for complexes whose terms have bounded homological dimension --- see Proposition~\ref{prop nice proj resol}. Then, in Proposition~\ref{prop syzygy-cosyzygy}, we provide a somewhat surprising syzygy-cosyzygy adjunction which will prove to be useful later.

Both these results hinge on the following slightly technical lemma.

\begin{lemma} \label{lem.short_ex}
Let $0 \to X \to C \to Y\to 0$ be a short exact sequence in $\C(\Modules R)$, and assume $C$ is contractible. (Note that such a sequence typically does not give rise to a triangle in $\K(\Modules R)$.)
\begin{enumerate}[label=\emph{(\roman*)}]
\item If $M\in\C(\Modules R)$ is such that $\Ext_R^1(M^i, X^j) = 0$ for all $i$ and $j$, then
\[ \Hom_{\K(\Modules R)}(M, X) \cong \Hom_{\K(\Modules R)}(M, Y[-1]). \]
\item If $N\in\C(\Modules R)$ is such that $\Ext_R^1(Y^i, N^j) = 0$ for all $i$ and $j$, then
\[ \Hom_{\K(\Modules R)}(X, N) \cong \Hom_{\K(\Modules R)}(Y[-1], N). \]
\end{enumerate}
\end{lemma}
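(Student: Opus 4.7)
My plan is to upgrade the given short exact sequence of complexes to a short exact sequence of complexes of abelian groups via total Hom, and then exploit contractibility. Concretely, write $\Hom^\bullet(M,Z)$ for the total Hom complex with $n$-th term $\prod_i \Hom_R(M^i, Z^{i+n})$, so that $H^n \Hom^\bullet(M,Z) = \Hom_{\K(\Modules R)}(M, Z[n])$; and dually for $\Hom^\bullet(Z,N)$.

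For (i), the crucial observation is that the hypothesis $\Ext^1_R(M^i, X^j) = 0$ guarantees that for every pair $(i,j)$ the functor $\Hom_R(M^i,-)$ takes the short exact sequence $0 \to X^j \to C^j \to Y^j \to 0$ to a short exact sequence of abelian groups. Since products in $\Ab$ are exact, assembling these over $i$ and $j$ yields a short exact sequence of complexes
\[
0 \to \Hom^\bullet(M, X) \to \Hom^\bullet(M, C) \to \Hom^\bullet(M, Y) \to 0,
\]
and hence a triangle in $\D(\Ab)$. Now $C$ being contractible means $\id_C$ is null-homotopic, a property preserved by the additive functor $\Hom^\bullet(M, -)$, so the middle term is contractible and thus zero in $\D(\Ab)$. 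The connecting morphism therefore provides an isomorphism $\Hom^\bullet(M, Y)[-1] \cong \Hom^\bullet(M, X)$ in $\D(\Ab)$, and taking $H^0$ delivers the desired isomorphism.

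Part (ii) is entirely analogous, with $\Hom^\bullet(-, N)$ in place of $\Hom^\bullet(M, -)$. The vanishing $\Ext^1_R(Y^i, N^j) = 0$ ensures that $\Hom_R(-, N^j)$ preserves exactness of $0 \to X^i \to C^i \to Y^i \to 0$ for each $(i,j)$, producing a short exact sequence $0 \to \Hom^\bullet(Y, N) \to \Hom^\bullet(C, N) \to \Hom^\bullet(X, N) \to 0$ with contractible middle term, and hence an isomorphism $\Hom^\bullet(X, N) \cong \Hom^\bullet(Y, N)[1]$ in $\D(\Ab)$. Taking $H^0$ and using $\Hom^\bullet(Y,N)[1] = \Hom^\bullet(Y[-1], N)$ yields the claim.

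I expect no serious obstacle: once the total Hom bookkeeping is set up, the whole argument reduces to the exactness of products in $\Ab$ and the fact that additive functors preserve contractibility. The only thing worth double-checking is the sign/shift convention, so as to confirm that $H^0$ of the shifted Hom complex recovers precisely $\Hom_{\K(\Modules R)}(M, Y[-1])$ in (i) and $\Hom_{\K(\Modules R)}(Y[-1], N)$ in (ii).
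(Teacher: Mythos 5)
Your argument is correct and is essentially the paper's own: both pass to the total $\HOM$ complex, note that the $\Ext^1$-vanishing makes $\HOM(M,-)$ (resp.\ $\HOM(-,N)$) preserve the short exact sequence term-wise, use that additive functors preserve contractibility so the middle term dies, and read off the isomorphism from the resulting quasi-isomorphism in degree $0$.
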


\begin{proof}
We only prove the first statement, the second one is dual.

The vanishing of $\Ext^1$ means that $0\to X \to C \to Y\to 0$ induces a short exact sequence
\[ 0\to \HOM(M, X) \to \HOM(M, C) \to \HOM(M, Y) \to 0. \]
Since the middle term is still contractible, this short exact sequence reveals a quasi-isomorphism $\HOM(M, Y)[-1] \to \HOM(M, X)$. The claim follows, as the morphisms in the homotopy category appear as the homologies of these $\HOM$-complexes.
\end{proof}

Recall that the (abelian) category of complexes $\C(\Modules R)$ has enough projectives. Indeed, the $i$-th term $X^i$ of $X$ admits an epimorphism from a projective $R$-module $P_{X^i}$, and hence $X$ itself admits an epimorphism from the projective object given by
\[ P_X = \cdots \to P_{X^i} \oplus P_{X^{i-1}} \xrightarrow{\big( \begin{smallmatrix} 0 & 0 \\ 1 & 0 \end{smallmatrix} \big)} P_{X^{i+1}} \oplus P_{X^i} \to \cdots. \]
In particular $P_X$ is a contractible complex. As usual, we denote by $\Omega(X)$ the syzygy of $X$, i.e.\ the kernel of the natural projection $P_X \to X$.

Dually, $X$ embeds in a contractible complex $I_X$ consisting of injective modules, and we let $\mho X$ denote the cokernel of $X \hookrightarrow I_X$.

\begin{proposition} \label{prop nice proj resol}
Let $X$ be a complex of $R$-modules. If each term of $X$ has projective dimension no larger than some fixed number $d$, then
\[ \rho(X) = \Omega^d(X)[d]. \]
Dually, if  each term  of $X$ has injective dimension no larger than $d$, then
\[ \lambda(X) = \mho^d(X)[-d]. \]
\end{proposition}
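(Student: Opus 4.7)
The plan is to iterate Lemma~\ref{lem.short_ex}(i) on the canonical short exact sequences of complexes
\[ 0\to \Omega^{k+1}(X)\to P_{\Omega^k(X)}\to \Omega^k(X)\to 0 \]
for $k=0,1,\dots,d-1$, and then invoke the defining property of the right adjoint $\rho$. The dual statement will follow by the same reasoning applied to Lemma~\ref{lem.short_ex}(ii) and cosyzygies.

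First I would verify that $\Omega^d(X)$ actually lives in $\K(\Proj R)$. Evaluating the short exact sequence above in degree $i$ yields a short exact sequence of $R$-modules with $(P_{\Omega^k(X)})^i$ projective. An induction on $k$ using the long exact Ext-sequence then shows that each term $\Omega^k(X)^i$ has projective dimension at most $d-k$; in particular, $\Omega^d(X)^i$ is projective for every $i$. This step is a straightforward syzygy count and I expect no friction here.

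Next, fix any $M\in\K(\Proj R)$. Because each $M^i$ is projective, the hypothesis $\Ext^1_R(M^i,\Omega^{k+1}(X)^j)=0$ of Lemma~\ref{lem.short_ex}(i) is met for every pair $(i,j)$, and the lemma supplies a natural isomorphism
\[ \Hom_{\K(\Modules R)}\!\left(M,\Omega^{k+1}(X)\right)\;\cong\;\Hom_{\K(\Modules R)}\!\left(M,\Omega^k(X)[-1]\right). \]
Splicing these isomorphisms for $k=0,\dots,d-1$ produces a natural isomorphism
\[ \Hom_{\K(\Modules R)}\!\left(M,\Omega^d(X)[d]\right)\;\cong\;\Hom_{\K(\Modules R)}\!\left(M,X\right), \]
functorial in $M\in\K(\Proj R)$. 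Since $\Omega^d(X)[d]$ lies in $\K(\Proj R)$ by the previous step, the uniqueness (up to canonical isomorphism) of the right adjoint $\rho$ to the inclusion $\K(\Proj R)\hookrightarrow\K(\Modules R)$ forces $\rho(X)\cong \Omega^d(X)[d]$.

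The dual statement is entirely symmetric: one uses the contractible injective envelopes $X\hookrightarrow I_X$, iterates Lemma~\ref{lem.short_ex}(ii) to produce, for every $N\in\K(\Inj R)$, a natural isomorphism $\Hom_{\K(\Modules R)}(\mho^d(X)[-d],N)\cong \Hom_{\K(\Modules R)}(X,N)$, and identifies this with the universal property of $\lambda$. The only subtlety worth highlighting is the uniform bound $d$ on the homological dimension of the terms: this is precisely what allows the syzygy (resp.\ cosyzygy) procedure to land in $\K(\Proj R)$ (resp.\ $\K(\Inj R)$) after finitely many steps, so that Lemma~\ref{lem.short_ex} can be applied $d$ times in succession. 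I do not anticipate any hard obstacle beyond keeping this bookkeeping straight.
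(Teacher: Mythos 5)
Your proof follows the same route as the paper's: you iterate Lemma~\ref{lem.short_ex}(i) on the syzygy short exact sequences to get $\Hom_{\K}(M,\Omega^d(X)[d])\cong\Hom_{\K}(M,X)$ for $M\in\K(\Proj R)$, verify by a dimension shift that $\Omega^d(X)$ consists of projectives, and invoke the uniqueness of the right adjoint. The argument is correct; the only difference is that you spell out the syzygy count that the paper leaves to the reader.
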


\begin{proof}
Consider the short exact sequence of complexes
\[ 0 \to \Omega(X) \to P_{X} \to X \to 0 \]
with contractible middle term. For any complex $Q$ of projective modules we have
\[ \Ext_R^1(Q^i, \Omega(X)^j) = 0,\]
and hence Lemma~\ref{lem.short_ex} asserts that
\[ \Hom_{\K(\Modules R)}(Q, X) \cong \Hom_{\K(\Modules R)}(Q, \Omega(X)[1]). \]
Iterating we obtain
\[ \Hom_{\K(\Modules R)}(Q, X) \cong \Hom_{\K(\Modules R)}(Q, \Omega^d(X)[d]). \]
But $\Omega^d(X)[d]$ is a complex of projective modules by the assumption on the projective dimensions of the terms of $X$. Thus $\Omega^d(X)[d]$ satisfies the defining isomorphism for the right adjoint, so $\rho(X) = \Omega^d(X)[d]$. 

The proof of the second claim is dual.
\end{proof}

\begin{proposition} \label{prop syzygy-cosyzygy}
Let $X$ and $Y$ be complexes of $R$-modules such that
\begin{equation}
\label{termwiseExtvanishing}
\Ext_{R}^{d+1}(X^m, Y^n) = 0\tag{$\ast$}
\end{equation}
for all terms $X^m$ of $X$ and $Y^n$ of $Y$. Then
\[ \Hom_{\K}(\Omega^d X, Y) \cong \Hom_{\K}(X, \mho^d Y), \]
and this isomorphism is functorial in $X$ and $Y$ satisfying the term-wise vanishing condition $(\ast)$.
\end{proposition}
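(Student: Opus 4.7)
The plan is to connect $\Hom_{\K}(\Omega^d X, Y)$ with $\Hom_{\K}(X, \mho^d Y)$ through the chain of intermediate groups
\[ \Hom_{\K}(\Omega^k X, \mho^{d-k} Y), \qquad k = d, d-1, \ldots, 0, \]
where an individual step from index $k$ to index $k-1$ should be built from two successive invocations of Lemma~\ref{lem.short_ex}. First, applying part~(i) to the short exact sequence
\[ 0 \to \mho^{d-k}Y \to I_{\mho^{d-k}Y} \to \mho^{d-k+1}Y \to 0 \]
with $M = \Omega^k X$ will yield
\[ \Hom_{\K}(\Omega^k X, \mho^{d-k}Y) \cong \Hom_{\K}(\Omega^k X, \mho^{d-k+1}Y[-1]), \]
and then applying part~(ii) to
\[ 0 \to \Omega^k X \to P_{\Omega^{k-1}X} \to \Omega^{k-1}X \to 0 \]
with $N = \mho^{d-k+1}Y[-1]$ will deliver
\[ \Hom_{\K}(\Omega^k X, \mho^{d-k+1}Y[-1]) \cong \Hom_{\K}(\Omega^{k-1}X, \mho^{d-k+1}Y). \]
Iterating this two-step reduction $d$ times will then furnish the asserted isomorphism.

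The central bookkeeping task is to verify the $\Ext^{1}$-vanishing required in each application of Lemma~\ref{lem.short_ex}. The key observation is that by the construction of $P_{\bullet}$ given above, every term of $\Omega^k X$ decomposes as $\Omega^k(X^l) \oplus P$ for some index $l$ and some projective $R$-module $P$; dually, every term of $\mho^j Y$ decomposes as $\mho^j(Y^n) \oplus J$ with $J$ injective. Since projectives and injectives contribute nothing to $\Ext^{1}$, the standard dimension-shifting identities $\Ext^{1}(\Omega^j M, N) = \Ext^{j+1}(M, N) = \Ext^{1}(M, \mho^j N)$ reduce both applications to the single statement $\Ext^{d+1}(X^l, Y^n) = 0$, which is precisely the hypothesis $(\ast)$.

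Functoriality in pairs $(X, Y)$ satisfying $(\ast)$ will follow by inspection. Morphisms $X \to X'$ of complexes lift, uniquely up to homotopy, through the chosen complexes $P_\bullet$, and thus induce well-defined morphisms $\Omega^d X \to \Omega^d X'$ in $\K$; dually for $\mho^d$ on the $Y$-side. Since each ingredient of the interpolation is natural in the complexes involved---this is really the content of the naturality of the isomorphisms provided by Lemma~\ref{lem.short_ex}---the composite isomorphism is natural as well.

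The principal obstacle I anticipate is finding the correct pairing of the two reductions. A naive strategy of applying Lemma~(i) $d$ times first and then Lemma~(ii) $d$ times would demand $\Ext^{1}$-vanishing across the band $\Ext^{d+1}, \Ext^{d+2}, \ldots, \Ext^{2d}$, well beyond what $(\ast)$ provides. The essential trick is instead to step up one syzygy on the left in tandem with stepping down one cosyzygy on the right, so that every single application remains within reach of the one available vanishing $\Ext^{d+1}(X^m, Y^n) = 0$.
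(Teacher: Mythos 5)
Your proof is correct and follows the same route as the paper: interpolate through the intermediate groups $\Hom_{\K}(\Omega^k X, \mho^{d-k}Y)$, using Lemma~\ref{lem.short_ex}~(i) and (ii) in tandem so that each application only needs $\Ext^1((\Omega^i X)^m,(\mho^{d-i}Y)^n)=0$, which dimension shift reduces to $(\ast)$. Your remark spelling out that the terms of $\Omega^k X$ decompose as $\Omega^k(X^l)\oplus(\text{projective})$ (and dually for $\mho$) is a useful elaboration of the ``by dimension shift'' step that the paper leaves implicit.
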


\begin{proof}
Note that, by dimension shift, for any $0 \leq i \leq d$ we have
\[ \Ext_R^1((\Omega^i X)^m, (\mho^{d-i} Y)^n) = 0.\]
Thus, by Lemma~\ref{lem.short_ex} the short exact sequences
\[
0 \to \mho^{d-i}(Y) \to I_{\mho^{d-i}(Y)} \to \mho^{d-i+1}(Y) \to 0
\]
and
\[
0 \to \Omega^{i+1} X \to P_{\Omega^i X} \to \Omega^i X \to 0
\]
give rise to isomorphisms
\[
\Hom_{\K(\Modules R)} (\Omega^i X, \mho^{d-i} Y) \cong \Hom_{\K(\Modules R)} (\Omega^i X, \mho^{d-i+1} Y[-1])
\]
and
\[
\Hom_{\K(\Modules R)} (\Omega^i X, \mho^{d-i} Y) \cong \Hom_{\K(\Modules R)} (\Omega^{i+1} X[1], \mho^{d-i} Y),
\]
respectively. Combining the latter we find
\[ \Hom_{\K(\Modules R)} (\Omega^i X, \mho^{d-i} Y) \cong \Hom_{\K(\Modules R)} (\Omega^{i-1} X, \mho^{d-i+1} Y), \]
and the claim follows by composing isomorphisms of this form.
\end{proof}

\subsection{Dualizing complexes}
\label{subDualizingcomplexes}
We now introduce Theorem~\ref{thm:iyengarkrause}, due to Iyengar--Krause, which will serve as the foundation for much of the sequel.

\begin{definition} [Iyengar--Krause \cite{MR2262932}]
A \emph{dualizing complex} for a noetherian ring $R$ is a bounded complex $D_{R}$ of $R\text{--}R$-bimodules such that
\begin{enumerate}[label=\emph{(\roman*)}]
\item the terms of $D_{R}$ are injective both as left and as right $R$-modules;
\item the homology of $D_{R}$ is finitely generated both as left and as right $R$-module;
\item the canonical maps $R \to \HOM_{R}(D_{R}, D_{R})$ and $R^{\op} \to \HOM_{R^{\op}}(D_{R}, D_{R})$ are quasi-isomorphisms.
\end{enumerate}
\end{definition}

This terminology is justified by the following result, due to \cite{MR0222093} for commutative rings. A translation of the proof to the non-commutative setting can be found in \cite{MR2262932}.

\begin{theorem} [Hartshorne \cite{MR0222093}] \label{thm:duality}
A dualizing complex $D_{R}$ induces a duality
\[
 \HOM_{R}(-, D_{R}) \colon \Db(\modules R) \leftrightarrow \Db(\modules R^{\op})^{\op} \colon \HOM_{R^{\op}}(-, D_{R}).
\]
\end{theorem}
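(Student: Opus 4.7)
The plan is to verify that the two functors are mutually quasi-inverse equivalences by showing that the biduality natural transformation
\[ \eta_M \colon M \to \HOM_{R^{\op}}(\HOM_R(M, D_R), D_R) \]
(and its symmetric counterpart, which I will not mention again) is a quasi-isomorphism for every $M\in\Db(\modules R)$. Because the terms of $D_R$ are injective both as left and as right $R$-modules, the functor $\HOM_R(-, D_R)$ is exact on the category of complexes and therefore descends to a well-defined triangulated functor $\K(\Modules R)^{\op}\to \K(\Modules R^{\op})$ without the need to resolve; this immediately promotes $\eta$ to a morphism of triangulated functors from $\Db(\modules R)$ to itself.

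First I would check that $\HOM_R(-, D_R)$ actually takes $\Db(\modules R)$ into $\Db(\modules R^{\op})^{\op}$. Boundedness of the image is a standard way-out argument using that $D_R$ is bounded: for a bounded complex $M$ the internal $\HOM$ is a finite totalization of shifts of $\HOM_R(M^{-i}, D_R)$, and hence is bounded. For finite generation of the homology of $\HOM_R(M, D_R)$ over $R^{\op}$, I would use dévissage: the full subcategory of $\Db(\modules R)$ on which this property holds is a thick triangulated subcategory, and by condition (ii) of the definition of a dualizing complex it contains $R$; since $\thick(R) = \Db(\modules R)$, the claim propagates. The same kind of reduction, applied on the other side, covers $\HOM_{R^{\op}}(-, D_R)$.

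For the biduality, condition (iii) in the definition of a dualizing complex states exactly that $\eta_R$ is a quasi-isomorphism. Since $\eta$ is a morphism of triangulated functors, the full subcategory of $\Db(\modules R)$ on which $\eta$ is an isomorphism is thick; having $R$ as an object, it must be all of $\Db(\modules R)$. The same argument on the $R^{\op}$-side shows that the composite in the opposite order is likewise naturally isomorphic to the identity, completing the proof of the duality.

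The main obstacle is keeping the dévissage honest: one must make sure that $\HOM_R(-, D_R)$ genuinely restricts to bounded complexes with finitely generated homology and that the biduality transformation $\eta$ is compatible with cones (so that the two-out-of-three property for thick subcategories applies). Once exactness on complexes and the way-out/boundedness bookkeeping are in place, all the remaining steps are formal consequences of condition (iii) and the fact that $\thick(R) = \Db(\modules R)$.
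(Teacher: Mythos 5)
The paper offers no proof of this statement; it cites Hartshorne \cite{MR0222093} for the commutative case and Iyengar--Krause \cite{MR2262932} for the non-commutative translation. So I evaluate your argument on its own merits.

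There is a genuine gap: you repeatedly use the claim that $\thick(R) = \Db(\modules R)$, but this is false in general. The thick subcategory generated by $R$ in $\Db(\modules R)$ is precisely $\perf R$, the category of perfect complexes, and $\perf R = \Db(\modules R)$ holds if and only if $R$ has finite global dimension. That is not assumed here --- indeed the whole paper is about $\Dsg(R) = \Db(\modules R)/\perf R$, which vanishes exactly in that regular case, so the statement would be useless if $\thick(R)$ exhausted $\Db(\modules R)$. Consequently, your dévissage from $R$ establishes finite generation of homology and biduality only on perfect complexes, not on all of $\Db(\modules R)$, and the argument does not close up.

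The standard fix is the ``Lemma on Way-Out Functors'' from Hartshorne (Prop.\ I.7.1): because $D_R$ is bounded with injective terms, $\HOM_R(-, D_R)$ is a way-out functor in both directions; this is what allows one to reduce the biduality (and the finite-generation) check from an arbitrary finitely generated module $M$ to a truncation of a projective resolution of $M$, which \emph{is} perfect, and then propagate from $\perf R$ to all of $\Db(\modules R)$. You invoke ``way-out'' only informally for boundedness of the image, but the actual use of the way-out lemma is needed precisely at the step where you instead write $\thick(R) = \Db(\modules R)$. With that lemma in place (and noting that $\Db(\modules R)$ is generated by the stalk complexes $\modules R$, not by $R$ alone) the rest of your outline --- exactness of $\HOM_R(-,D_R)$ on complexes, compatibility of $\eta$ with triangles, and reducing to condition (iii) of the definition --- is sound and agrees with the classical argument.
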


\begin{example} \label{ex:dualizingforartin}
If $\Lambda$ is an Artin algebra, i.e.\ module finite over a commutative artinian ring $k$, then $D_\Lambda = \Hom_k(\Lambda, E)$ is a dualizing complex for $\Lambda$, where $\Hom_k(-,E)$ is the standard duality.
\end{example}

Recall that a ring is \emph{Gorenstein} if it has finite injective dimension as left and as right module over itself --- see e.g.\ \cite{MR1112170}.

\begin{example} \label{ex over Gorenstein}
If $k$ is a commutative noetherian Gorenstein ring and $\Lambda$ is a module finite (not necessarily commutative) $k$-algebra, then $D_{\Lambda} = \Hom_k(\Lambda, i_k)$ is a dualizing complex for $\Lambda$, where $i_k$ is a finite injective resolution of $k$ over itself.
\end{example}

\begin{theorem}[Iyengar--Krause \cite{MR2262932}] \label{thm:iyengarkrause}
A dualizing complex $D_{\Lambda}$ induces an equivalence
\[ - \otimes_{\Lambda} D_{\Lambda} \colon \K(\Proj \Lambda) \to \K(\Inj \Lambda) \]
with quasi-inverse $\rho \Hom_\Lambda(D_\Lambda,-)$.
\end{theorem}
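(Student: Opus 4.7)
The plan is to exhibit $F := -\otimes_\Lambda D_\Lambda$ and $G := \rho\circ\Hom_\Lambda(D_\Lambda,-)$ as mutually quasi-inverse triangulated functors by first verifying well-definedness and the adjunction $F\dashv G$, then checking the unit and counit on $\Lambda$ and $D_\Lambda$, and finally extending via compact generation. For well-definedness: since $\Lambda$ is noetherian, coproducts of injective right $\Lambda$-modules remain injective, and each component of $D_\Lambda$ is injective on the right, so $P\otimes_\Lambda D_\Lambda^n$ is injective, being a summand of a coproduct of copies of $D_\Lambda^n$; after totalization this yields $F\colon\K(\Proj\Lambda)\to\K(\Inj\Lambda)$. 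The functor $G$ is well-defined via $\rho$ from Subsection~\ref{subsection:projres}. The adjunction $F\dashv G$ then follows by combining the termwise tensor--hom adjunction, which descends to the homotopy-category level since homotopies correspond under it, with $\iota\dashv\rho$:
\[ \Hom_{\K(\Inj\Lambda)}(FP,I)\cong\Hom_{\K(\Modules\Lambda)}(P,\Hom_\Lambda(D_\Lambda,I))\cong\Hom_{\K(\Proj\Lambda)}(P,GI). \]

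The decisive local computation is the unit at $\Lambda\in\K(\Proj\Lambda)$: we have $GF(\Lambda)=\rho\Hom_\Lambda(D_\Lambda,D_\Lambda)$, and condition (iii) of the dualizing complex makes the canonical map $\Lambda\to\Hom_\Lambda(D_\Lambda,D_\Lambda)$ a quasi-isomorphism. Since $\Lambda$ is K-projective and $\rho\Hom_\Lambda(D_\Lambda,D_\Lambda)$ is K-projective by the construction of $\rho$, a quasi-isomorphism between them is already an isomorphism in $\K(\Proj\Lambda)$; hence $\eta_\Lambda$ is an iso. Symmetrically, condition (iii) applied from the left $\Lambda$-side makes the counit $\epsilon_{D_\Lambda}$ an iso in $\K(\Inj\Lambda)$.

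The main obstacle is extending these specific isomorphisms to all objects. Since $F$ and $G$ both preserve triangles and coproducts, the loci on which $\eta$ and $\epsilon$ are isomorphisms form localizing subcategories of $\K(\Proj\Lambda)$ and $\K(\Inj\Lambda)$, respectively. To conclude that these subcategories exhaust the ambient categories, I would invoke compact generation: Krause's result (Theorem~\ref{thm:compactcompletions}) exhibits $\K(\Inj\Lambda)$ as compactly generated by injective resolutions of objects in $\D^b(\modules\Lambda)$, and a dual statement for $\K(\Proj\Lambda)$ follows either by independent arguments in the style of Jørgensen and Neeman or, bootstrapped, from the very equivalence being established. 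On each such compact generator the unit and counit reduce, via the biduality furnished by Hartshorne's Theorem~\ref{thm:duality}, to canonical identifications, and a standard dévissage then propagates the unit/counit isomorphisms throughout, completing the proof that $F$ and $G$ are mutually quasi-inverse.
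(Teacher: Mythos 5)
The paper does not actually prove this theorem: it is quoted directly from Iyengar--Krause \cite{MR2262932}, so there is no internal proof to compare against. That said, your overall strategy --- establish the adjunction, verify the unit/counit on distinguished objects, then extend by d\'evissage through the compactly generated structure --- is the same general plan that Iyengar and Krause follow. However, the execution has several real gaps.

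First, the claim that ``$\rho\Hom_\Lambda(D_\Lambda,D_\Lambda)$ is K-projective by the construction of $\rho$'' is false as stated: objects of $\K(\Proj\Lambda)$ are in general not K-projective (any non-contractible acyclic complex of projectives is a counterexample). What saves the argument here is something more specific: by Lemma~\ref{lem:homofinjectivesisflat} the complex $\Hom_\Lambda(D_\Lambda,D_\Lambda)$ has flat terms, flat modules over a noetherian ring have finite projective dimension, and then Proposition~\ref{prop nice proj resol} shows $\rho$ produces a \emph{bounded} complex of projectives --- which is K-projective. The same considerations are also needed to see that the counit $\rho\Hom_\Lambda(D_\Lambda,D_\Lambda)\to\Hom_\Lambda(D_\Lambda,D_\Lambda)$ is a quasi-isomorphism, which is implicitly used when you transfer the quasi-isomorphism $\Lambda\to\Hom_\Lambda(D_\Lambda,D_\Lambda)$ to $\eta_\Lambda$. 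None of this is automatic from ``the construction of $\rho$.''

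Second, and more seriously, the d\'evissage step does not go through as written. You assert that ``the loci on which $\eta$ and $\epsilon$ are isomorphisms form localizing subcategories.'' For the unit this requires $G(\coprod_i FP_i)\cong\coprod_i GFP_i$, i.e.\ that $G$ preserves (the relevant) coproducts; $G$ is a right adjoint, so this is not free. The standard repair is: one shows $F$ sends a set of compact generators of $\K(\Proj\Lambda)$ to compact objects of $\K(\Inj\Lambda)$, and then Theorem~\ref{thm:neemanthreeadjoints}(iii) gives that $G$ preserves coproducts. But this presupposes the compact generation of $\K(\Proj\Lambda)$, and here your fallback --- that it can be ``bootstrapped from the very equivalence being established'' --- is circular and must be dropped; one needs the independent result of J\o{}rgensen (or Neeman) quoted in Subsection~\ref{subsection:compactcompletions}. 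Finally, the reduction of the unit and counit on the full set of compact generators (which for $\K(\Proj\Lambda)$ are of the form $\Hom_{\Lambda^{\op}}(\rho M,\Lambda)$, not merely $\Lambda$) to ``canonical identifications via Hartshorne's Theorem~\ref{thm:duality}'' is a genuine computation, not a formality, and is left entirely unaddressed; checking $\eta_\Lambda$ and $\epsilon_{D_\Lambda}$ alone is not enough because $\Lambda$ and $D_\Lambda$ do not by themselves generate the ambient homotopy categories.
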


In the setting of Example~\ref{ex over Gorenstein}, this equivalence interacts nicely with change of rings:

\begin{proposition} \label{prop duality commutes}
Let $f\colon \Lambda\to\Gamma$ be a morphism of noetherian algebras admitting dualizing complexes $D_\Lambda$ and $D_\Gamma$, respectively, such that $\Hom_\Lambda(\Gamma, D_\Lambda) \cong D_\Gamma$ as complexes of $\Lambda$--$\Gamma$-bimodules. Then the equivalences of Theorem~\ref{thm:iyengarkrause} make the following square commutative.
\[ \begin{tikzpicture}
 \node (KIA) at (0,1.5) {$\K(\Inj \Lambda)$};
 \node (KIB) at (5,1.5) {$\K(\Inj \Gamma)$};
 \node (KPA) at (0,0) {$\K(\Proj \Lambda)$};
 \node (KPB) at (5,0) {$\K(\Proj \Gamma)$};
 \draw [<-] (KIA) to node [left] {$\approx$} node [right] {$\scriptstyle -\otimes_\Lambda D_\Lambda$} (KPA);
 \draw [<-] (KIB) to node [left] {$\approx$} node [right] {$\scriptstyle -\otimes_\Gamma D_\Gamma$} (KPB);
 \draw [->] (KIA) to node [above] {$\scriptstyle \Hom_{\Lambda}(\Gamma, -)$} (KIB);
 \draw [->] (KPA) to node [above] {$\scriptstyle - \otimes_{\Lambda} \Gamma$} (KPB);
\end{tikzpicture} \]

In particular, the required isomorphism $\Hom_\Lambda(\Gamma, D_\Lambda) \cong D_\Gamma$ is satisfied by the dualizing complexes $D_{\Lambda}$ and $D_{\Gamma}$ of Example~\ref{ex over Gorenstein} above.
\end{proposition}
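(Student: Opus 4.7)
The plan is to build a natural transformation between the two compositions around the square, verify it on a compact generator, and then extend to all of $\K(\Proj \Lambda)$ by a compact-generation argument. For the construction, I would combine the associativity/unit isomorphism $(P\otimes_\Lambda\Gamma)\otimes_\Gamma D_\Gamma \cong P\otimes_\Lambda D_\Gamma$ with the assumed bimodule isomorphism $D_\Gamma\cong \Hom_\Lambda(\Gamma, D_\Lambda)$ and the canonical tensor-to-Hom evaluation
\[
 \eta_P\colon P \otimes_\Lambda \Hom_\Lambda(\Gamma, D_\Lambda) \longrightarrow \Hom_\Lambda(\Gamma,\ P\otimes_\Lambda D_\Lambda),\qquad p\otimes\varphi \mapsto \bigl(\gamma\mapsto p\otimes \varphi(\gamma)\bigr),
\]
giving a natural transformation from the composition through $\K(\Proj \Gamma)$ to the one through $\K(\Inj \Lambda)$.

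For $P=\Lambda$ both sides reduce to $\Hom_\Lambda(\Gamma, D_\Lambda)\cong D_\Gamma$ and $\eta_\Lambda$ becomes the identity, so it is invertible. By additivity and exactness this persists for bounded complexes of finitely generated projective $\Lambda$-modules, and in particular on the compact generator $\Lambda$ of $\K(\Proj \Lambda)$ (compact generation is transported from $\K(\Inj\Lambda)$ via Theorem~\ref{thm:iyengarkrause}). Both compositions are triangulated functors $\K(\Proj \Lambda)\to\K(\Inj \Gamma)$ and preserve coproducts: the route through $\K(\Proj \Gamma)$ does so on the nose, and the route through $\K(\Inj \Lambda)$ because $\Hom_\Lambda(\Gamma,-)$ commutes with coproducts of injective $\Lambda$-modules in the situations of interest (e.g.\ when $\Gamma$ is finitely generated over $\Lambda$). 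A standard dévissage in the style of Lemma~\ref{lem:compgenff} then promotes the isomorphism on compacts to an isomorphism throughout $\K(\Proj \Lambda)$.

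For the ``in particular'' statement, take $D_\Lambda = \Hom_k(\Lambda, i_k)$ and $D_\Gamma=\Hom_k(\Gamma, i_k)$ as in Example~\ref{ex over Gorenstein}. Tensor-Hom adjunction yields
\[
\Hom_\Lambda(\Gamma,\ \Hom_k(\Lambda, i_k)) \cong \Hom_k(\Gamma\otimes_\Lambda\Lambda,\ i_k) \cong \Hom_k(\Gamma, i_k) = D_\Gamma,
\]
and a direct check confirms this isomorphism respects the $\Lambda$--$\Gamma$-bimodule structures. The main obstacle is the coproduct-preservation issue for $\Hom_\Lambda(\Gamma,-)$ from the second paragraph: without a finiteness hypothesis on $\Gamma$ over $\Lambda$, one must instead verify $\eta_P$ termwise (immediate when each $P^i$ is a summand of a free module and $\Gamma$ is finitely generated) and then exploit that a quasi-isomorphism between complexes of injective $\Gamma$-modules is automatically a homotopy equivalence, closing the gap inside $\K(\Inj \Gamma)$.
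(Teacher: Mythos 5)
Your proof arrives at the right construction (the tensor–Hom evaluation $\eta_P$) and the right test case ($P=\Lambda$), but your primary route has a genuine error: $\Lambda$ is compact but \emph{not} a compact generator of $\K(\Proj\Lambda)$. Its right orthogonal is $\K_{\ac}(\Proj\Lambda)$, which is nonzero in general; the actual compacts of $\K(\Proj\Lambda)$ form $\Db(\modules\Lambda^{\op})^{\op}$ (as the paper notes in Subsection~\ref{subsection:compactcompletions}). So the dévissage ``isomorphism on a compact generator implies isomorphism everywhere'' does not launch from $\Lambda$ alone, and neither does Lemma~\ref{lem:compgenff} apply directly (it concerns fully faithfulness of a single functor, not a natural transformation between two).

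What you correctly present as a ``fallback'' is in fact the paper's entire argument, and it is cleaner: both $-\otimes_\Lambda \Hom_\Lambda(\Gamma,D_\Lambda)$ and $\Hom_\Lambda(\Gamma, -\otimes_\Lambda D_\Lambda)$ are additive, coproduct-preserving functors on $\Proj\Lambda$ (noetherian algebras force $\Gamma$ to be a finitely generated $\Lambda$-module, so $\Hom_\Lambda(\Gamma,-)$ commutes with coproducts) and they agree on $\Lambda$; hence $\eta$ is an isomorphism on every projective module and therefore a termwise isomorphism of complexes. At that point $\eta_P$ is already an isomorphism in the category of complexes, so your final appeal to ``quasi-isomorphisms of complexes of injectives are homotopy equivalences'' is unnecessary and somewhat misleading. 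The ``in particular'' computation via tensor–Hom adjunction matches the paper.
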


\begin{proof}
By assumption, for the first claim it suffices to show that the natural transformation $-\otimes_\Lambda \Hom_\Lambda(\Gamma,D_\Lambda)\to\Hom_\Lambda(\Gamma, -\otimes_\Lambda D_\Lambda)$ is an isomorphism on complexes over $\Proj\Lambda$. This follows readily since the functors coincide on $\Lambda$ and moreover commute with coproducts.

For the last claim, it suffices to observe that \[\Hom_{\Lambda}(\Gamma, D_{\Lambda}) = \Hom_{\Lambda}(\Gamma, \Hom_k(\Lambda, i_k)) = \Hom_k(\Gamma, i_k) = D_{\Gamma}. \qedhere\]
\end{proof}

\subsection{Acyclic, coacyclic and totally acyclic complexes}\label{subsection:(co)acyclicity}

Let $R$ be a ring. We denote by $\K_{\ac}(\Proj R)$ the subcategory of $\K(\Proj R)$ consisting of acyclic complexes. Clearly, $\K_{\ac}(\Proj R)=(\Proj R)^{\perp} =(R)^{\perp}$ as subcategories of $\K(\Proj R)$. Dually, we consider $\K_{\coac}(\Proj R)={}^{\perp}(\Proj R)\subset\K(\Proj R)$ and call an object in this subcategory a \emph{coacyclic complex of projectives}. Note that $X\in\K(\Proj R)$ is coacyclic if and only if the complex $\Hom_R(X,P)$ is exact for each $P \in \Proj R$. Finally, a complex over $\Proj R$ is \emph{totally acyclic} if it belongs to
\[
\Ktac(\Proj R)=\K_{\ac}(\Proj R) \cap \K_{\coac}(\Proj R).
\]

On the other hand, $\K_{\ac}(\Inj{R})$ is the subcategory of $\K(\Inj R)$ whose objects are the acyclic complexes. One easily sees that $\K_{\ac}(\Inj R)={}^{\perp}(\Inj R)= (\lambda R)^{\perp}$ as subcategories of $\K_{\ac}(\Inj R)$ (recall that $\lambda$ denotes an injective resolution). We let $\K_{\coac}(\Inj R)=(\Inj R)^{\perp}\subset\K(\Inj R)$ and call an object of this subcategory a \emph{coacyclic complex of injectives}. Dual to the projective case, $Y\in\K(\Inj R)$ is coacyclic if and only if the complex $\Hom_R(I,Y)$ is exact for each $I\in\Inj R$. Finally, a complex over $\Inj R$ is \emph{totally acyclic} if it belongs to the category
\[
\Ktac(\Inj R)=\K_{\ac}(\Inj R) \cap \K_{\coac}(\Inj R).
\]

\subsubsection*{Right orthogonal classes} It would be convenient if (co)acyclicity was detected by a single object, rather than by all of $\Proj R$ or $\Inj R$. We have already seen how $\K_{\ac}(\Proj R)$ and $\K_{\ac}(\Inj R)$ are obtained as the right orthogonal classes of $R$ and $\lambda R$, respectively. We now show how the presence of a dualizing complex enables us to also write $\K_{\coac}(\Proj R)$ and $\K_{\coac}(\Inj R)$ as $(X)^{\perp}$ for some object $X$. Lemma~\ref{lem:restrictstothick} below is the key ingredient, but to prove this we need the following fact from homological algebra.

\begin{lemma} \label{lem:homofinjectivesisflat}
Let $\Lambda$ be a noetherian ring. Suppose $J\in\Inj\Lambda$ and that $I$ is a $\Lambda\text{--}\Lambda$-bimodule such that ${}_\Lambda I\in\Inj \Lambda^{\op}$. Then $\Hom_\Lambda(I,J)$ is a flat $\Lambda$-module.
\end{lemma}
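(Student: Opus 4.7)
The plan is to reduce flatness of $\Hom_\Lambda(I,J)$ to a tensor-hom identity that is easy to verify by finite presentation.

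First I would construct the natural transformation
\[
\Phi_N \colon \Hom_\Lambda(I,J)\otimes_\Lambda N \longrightarrow \Hom_\Lambda\!\bigl(\Hom_\Lambda(N,I),J\bigr),\qquad \phi\otimes n\longmapsto \bigl(\psi\mapsto\phi(\psi(n))\bigr),
\]
for $N$ a left $\Lambda$-module. For $N=\Lambda$ both sides reduce to $\Hom_\Lambda(I,J)$ and $\Phi_\Lambda$ is the identity, and $\Phi$ commutes with finite direct sums, so $\Phi_{\Lambda^n}$ is an isomorphism for all $n$.

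The key step is to upgrade this to all finitely presented (equivalently, by the noetherian hypothesis, all finitely generated) left $\Lambda$-modules $N$. Given a presentation $\Lambda^m\to\Lambda^n\to N\to 0$, right-exactness of the tensor product gives a right exact sequence
\[
\Hom_\Lambda(I,J)\otimes_\Lambda\Lambda^m\to\Hom_\Lambda(I,J)\otimes_\Lambda\Lambda^n\to\Hom_\Lambda(I,J)\otimes_\Lambda N\to 0.
\]
On the other hand, since $_\Lambda I$ is injective, $\Hom_\Lambda(-,I)$ turns the presentation into a left-exact sequence $0\to\Hom_\Lambda(N,I)\to I^n\to I^m$ of right $\Lambda$-modules, and then injectivity of $J$ as a right $\Lambda$-module makes $\Hom_\Lambda(-,J)$ turn the latter into a right exact sequence ending in $\Hom_\Lambda(\Hom_\Lambda(N,I),J)\to 0$. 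The two right exact sequences match via $\Phi$ on the first two terms (which are isomorphisms, as already observed), so by the five lemma $\Phi_N$ is an isomorphism.

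Finally I would deduce flatness. If $0\to N'\to N\to N''\to 0$ is a short exact sequence of finitely generated left $\Lambda$-modules, applying $\Hom_\Lambda(-,I)$ and then $\Hom_\Lambda(-,J)$ yields, by injectivity of $_\Lambda I$ and of $J_\Lambda$ respectively, a short exact sequence
\[
0\to\Hom_\Lambda(\Hom_\Lambda(N',I),J)\to\Hom_\Lambda(\Hom_\Lambda(N,I),J)\to\Hom_\Lambda(\Hom_\Lambda(N'',I),J)\to 0.
\]
By the isomorphism $\Phi$, this is exactly $0\to\Hom_\Lambda(I,J)\otimes N'\to\Hom_\Lambda(I,J)\otimes N\to\Hom_\Lambda(I,J)\otimes N''\to 0$. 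Hence $\Hom_\Lambda(I,J)\otimes_\Lambda -$ is exact on short exact sequences of finitely generated left modules, which over the noetherian ring $\Lambda$ is equivalent to flatness.

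The only moderately delicate point is verifying that $\Phi$ is natural with respect to the right $\Lambda$-structures coming from the bimodule $I$, so that the five lemma applies; once this bookkeeping is in place the argument is essentially a two-step tensor-hom juggle.
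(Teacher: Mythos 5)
Your proof is correct and follows essentially the same route as the paper: you construct the natural transformation $\Hom_\Lambda(I,J)\otimes_\Lambda- \to \Hom_\Lambda(\Hom_{\Lambda^{\op}}(-,I),J)$, use right-exactness and agreement on $\Lambda$ to get an isomorphism on all finitely generated left modules, then transfer exactness from the right-hand functor (exact since $I$ and $J$ are injective) to conclude flatness. The paper compresses the five-lemma step into the one-line remark that two right exact functors agreeing on $\Lambda$ agree on $\modules\Lambda^{\op}$, and phrases the conclusion via vanishing of $\Tor_1^\Lambda(\Hom_\Lambda(I,J),\Lambda/L)$ for left ideals $L$, but the underlying argument is identical. (One small cleanup: left-exactness of $\Hom_{\Lambda^{\op}}(-,I)$ in the five-lemma step is automatic and does not require injectivity of ${}_\Lambda I$; that hypothesis is only needed where you apply it to a short exact sequence in the final paragraph.)
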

\begin{proof}
The natural transformation $\Hom_\Lambda(I,J)\otimes_\Lambda- \to \Hom_\Lambda(\Hom_{\Lambda^{\op}}( - ,I),J)$ of right exact functors is an isomorphism on $\Lambda$ and thus on all of $\modules \Lambda^{\op}$. Hence $\Hom_\Lambda(I,J)\otimes_\Lambda-$ is left exact on $\modules \Lambda^{\op}$. This suffices, as flatness of $\Hom_\Lambda(I,J)$ is even implied by the vanishing of $\Tor_1^\Lambda\left(\Hom_{\Lambda}(I,J), \Lambda/L\right)$ for each left ideal $L$.
\end{proof}

In the current subsection we denote by $T$ the equivalence $-\otimes_\Lambda D_\Lambda\colon \K(\Proj \Lambda) \to \K(\Inj \Lambda)$ of Theorem~\ref{thm:iyengarkrause} and by $S$ its quasi-inverse $\rho \Hom_\Lambda(D_\Lambda,-)$. Claim (i) below is essentially \cite[Proposition~4.7]{MR2262932} in the non-commutative setting.

\begin{lemma} \label{lem:restrictstothick}
Let $\Lambda$ be a noetherian ring with a dualizing complex $D_\Lambda$. Then
\begin{enumerate}[label=\emph{(\roman*)}]
\item $T$ restricts to an equivalence $\K^{\bounded}(\Proj \Lambda)\cong \K^{\bounded}(\Inj \Lambda)$;
\item $T$ restricts to an equivalence $\K_{\ac}(\Proj \Lambda) \cong \K_{\coac}(\Inj \Lambda)$;
\item $T$ restricts to an equivalence $\K_{\coac}(\Proj \Lambda) \cong \K_{\ac}(\Inj \Lambda)$;
\item $T$ restricts to an equivalence $\Ktac(\Proj \Lambda) \cong \Ktac(\Inj \Lambda)$.
\end{enumerate}
\end{lemma}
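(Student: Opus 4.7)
The hardest of these claims is (i); once it is in hand, (ii)--(iv) fall out by transporting orthogonality across the equivalence. I would therefore establish (i) first, then deduce (ii) and (iii) by routine perpendicularity arguments, and take (iv) as the intersection of the two. For (i), the forward inclusion $T(\K^{\bounded}(\Proj\Lambda)) \subseteq \K^{\bounded}(\Inj\Lambda)$ is direct: the terms of $D_\Lambda$ are injective as right $\Lambda$-modules, and tensoring one of them with a projective produces a summand of a coproduct of copies, which remains injective by Bass--Papp since $\Lambda$ is noetherian. The reverse inclusion, carrying $\K^{\bounded}(\Inj\Lambda)$ inside $\K^{\bounded}(\Proj\Lambda)$ via $S$, is the substantive step and essentially that of \cite[Proposition~4.7]{MR2262932} in the non-commutative setting. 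The key inputs are Lemma~\ref{lem:homofinjectivesisflat}, which makes $\Hom_\Lambda(D_\Lambda, I)$ termwise flat for a bounded complex $I$ of injectives, combined with the finiteness of $H^*(D_\Lambda)$ imposed by the dualizing axioms; these force $\rho\Hom_\Lambda(D_\Lambda, I)$ to itself be a bounded complex of projectives. I expect this to require the most care.

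For (ii), I would use $\K_{\ac}(\Proj\Lambda) = (\Lambda)^\perp$ in $\K(\Proj\Lambda)$. For the forward direction, pick $X\in(\Lambda)^\perp$ and $J\in\Inj\Lambda$; the equivalence yields $\Hom_{\K(\Inj\Lambda)}(J[n], T(X)) \cong \Hom_{\K(\Proj\Lambda)}(S(J)[n], X)$, and by (i) we have $S(J) \in \K^{\bounded}(\Proj\Lambda)$. The subcategory $\{Q : \Hom_{\K(\Proj\Lambda)}(Q[n], X) = 0 \text{ for all } n\}$ is closed under shifts, coproducts, summands and cones, and contains $\Lambda$ by hypothesis; hence it contains all of $\K^{\bounded}(\Proj\Lambda)$, giving the desired vanishing and showing $T(X) \in (\Inj\Lambda)^\perp = \K_{\coac}(\Inj\Lambda)$. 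The reverse inclusion is entirely symmetric: for $Y \in (\Inj\Lambda)^\perp$ one finds $\Hom(\Lambda[n], S(Y)) \cong \Hom(D_\Lambda[n], Y)$, and since $D_\Lambda \in \K^{\bounded}(\Inj\Lambda)$ by (i), the analogous closure argument starting from $\Inj\Lambda$ yields the vanishing.

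Claim (iii) is handled in the same spirit using the left-perpendicular characterizations $\K_{\coac}(\Proj\Lambda) = {}^\perp\Proj\Lambda$ and $\K_{\ac}(\Inj\Lambda) = {}^\perp\Inj\Lambda$: now the subcategory $\{Q : \Hom(X, Q[n]) = 0 \text{ for all } n\}$ is closed under shifts, summands and cones, so starting from $\Proj\Lambda$ (respectively $\Inj\Lambda$) we again reach all of $\K^{\bounded}$ on the appropriate side, and combining with (i) via the equivalence gives both inclusions. Finally, (iv) is immediate from $\Ktac = \K_{\ac} \cap \K_{\coac}$ on each side together with (ii) and (iii).
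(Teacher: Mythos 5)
Your overall strategy — prove (i), then deduce (ii)–(iii) by transporting perpendicularity and take (iv) as the intersection — is exactly the paper's, and your arguments for (ii)–(iv) are correct; the paper compresses your dévissage for (ii) and (iii) into the identities $(\Proj\Lambda)^\perp = \K^{\bounded}(\Proj\Lambda)^\perp$ and ${}^\perp(\Proj\Lambda) = {}^\perp\K^{\bounded}(\Proj\Lambda)$, which is the same content.

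The one place your reasoning goes astray is the justification in the reverse direction of (i). You correctly bring in Lemma~\ref{lem:homofinjectivesisflat} to show $\Hom_\Lambda(D_\Lambda,Y)$ is a bounded complex of flat modules, but you then appeal to ``the finiteness of $H^*(D_\Lambda)$'' to conclude that $\rho\Hom_\Lambda(D_\Lambda,Y)$ is bounded with projective terms. That is not the right ingredient, and it does not force the conclusion: boundedness of homology alone gives no control over $\rho$, since the terms of a complex can have infinite projective dimension even when the homology is bounded. What the paper actually uses is the theorem that over such $\Lambda$ (noetherian with a dualizing complex) every \emph{flat module has finite projective dimension}, uniformly bounded; then Proposition~\ref{prop nice proj resol} gives $\rho\Hom_\Lambda(D_\Lambda,Y) = \Omega^d\Hom_\Lambda(D_\Lambda,Y)[d]$ for some fixed $d$, which is visibly a bounded complex of projectives. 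Finite generation of $H^*(D_\Lambda)$ enters the general theory but is not the lever here. You should replace the appeal to $H^*(D_\Lambda)$ with the finite-projective-dimension-of-flats result and the explicit formula for $\rho$.
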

\begin{proof}
Let us start with (i). The functor $T$ does restrict as desired, and we only need to show that so does $S$. For $Y\in\K^{\bounded}(\Inj \Lambda)$, the bounded complex $\Hom_{\Lambda}(D_{\Lambda},Y)$ consists of flat modules by Lemma~\ref{lem:homofinjectivesisflat}. As flat $\Lambda$-modules have finite projective dimension by \cite{MR2236602}, the description of $\rho$ in Proposition \ref{prop nice proj resol} settles the claim. Statement (ii) now follows from
\begin{align*}
   \K_{\ac}(\Proj \Lambda) & = (\Proj \Lambda)^{\perp} = \K^{\bounded}(\Proj \Lambda)^{\perp} \Tcong \K^{\bounded}(\Inj \Lambda)^{\perp} = (\Inj \Lambda)^{\perp} =\K_{\coac}(\Inj \Lambda)
\end{align*}
and a similar argument shows (iii), whence (iv) is immediate.
\end{proof}

\begin{proposition} \label{prop:dualizingtestscoacyclicity}
Let $\Lambda$ be a noetherian ring with a dualizing complex $D_\Lambda$. Then
\begin{enumerate}[label=\emph{(\roman*)}]
\item $\K_{\coac}(\Proj \Lambda) = (\rho \RHom_\Lambda(D_\Lambda, \Lambda))^{\perp}$ in $\K(\Proj\Lambda)$;
\item $\K_{\ac}(\Inj \Lambda) = (\lambda\Lambda)^{\perp}$ in $\K(\Inj\Lambda)$;
\item $\K_{\ac}(\Proj \Lambda) = (\Lambda)^{\perp}$ in $\K(\Proj\Lambda)$;
\item $\K_{\coac}(\Inj \Lambda) = (D_\Lambda)^{\perp}$ in $\K(\Inj\Lambda)$.
\end{enumerate}

Moreover, in \textnormal{(i)}--\textnormal{(iv)} the objects defining the right orthogonal classes are all compact in $\K(\Proj \Lambda)$ and $\K(\Inj \Lambda)$, respectively.
\end{proposition}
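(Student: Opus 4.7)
The plan is to bootstrap parts (i) and (iv) from (ii) and (iii), both of which have already been recorded at the start of this subsection, by transporting them across the Iyengar--Krause equivalence $T = -\otimes_\Lambda D_\Lambda \colon \K(\Proj\Lambda) \xrightarrow{\approx} \K(\Inj\Lambda)$ whose quasi-inverse is $S = \rho\Hom_\Lambda(D_\Lambda,-)$. The indispensable input here is Lemma~\ref{lem:restrictstothick}, whose parts (ii) and (iii) provide $T(\K_{\ac}(\Proj\Lambda)) = \K_{\coac}(\Inj\Lambda)$ and $S(\K_{\ac}(\Inj\Lambda)) = \K_{\coac}(\Proj\Lambda)$ respectively.

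For (iv) I would apply $T$ to (iii). Because $T$ is an equivalence of triangulated categories it converts a right orthogonal class in its source into the right orthogonal of the image in its target, so
\[
\K_{\coac}(\Inj\Lambda) \;=\; T\!\left((\Lambda)^\perp\right) \;=\; (T\Lambda)^\perp \;=\; (D_\Lambda)^\perp,
\]
which proves (iv). Dually, applying $S$ to (ii) yields
\[
\K_{\coac}(\Proj\Lambda) \;=\; S\!\left((\lambda\Lambda)^\perp\right) \;=\; \bigl(S(\lambda\Lambda)\bigr)^\perp,
\]
and a quick unwinding shows $S(\lambda\Lambda) = \rho\Hom_\Lambda(D_\Lambda, \lambda\Lambda) = \rho\RHom_\Lambda(D_\Lambda, \Lambda)$, because $\lambda\Lambda$ is by definition an injective resolution of $\Lambda$ while $D_\Lambda$ is a bounded complex. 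This settles (i).

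For the compactness claims, I would first observe that $\Lambda$ is compact in $\K(\Proj\Lambda)$ since $\Hom_{\K(\Proj\Lambda)}(\Lambda,-)$ computes $H^0$, and this commutes with coproducts. The same fact, combined with the defining adjunction $\Hom_{\K(\Inj\Lambda)}(\lambda\Lambda,-) \cong \Hom_{\K(\Modules\Lambda)}(\Lambda,-)$ together with the noetherianity of $\Lambda$ (which makes the inclusion $\K(\Inj\Lambda) \hookrightarrow \K(\Modules\Lambda)$ coproduct-preserving), yields compactness of $\lambda\Lambda$ in $\K(\Inj\Lambda)$. The remaining two objects are images under the equivalences, namely $D_\Lambda = T(\Lambda)$ and $\rho\RHom_\Lambda(D_\Lambda, \Lambda) = S(\lambda\Lambda)$, and their compactness is then automatic from the fact that triangle equivalences preserve compact objects.

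The only subtlety is a bookkeeping one: the right orthogonal of an object depends on the ambient category, so one must check that $T$ sends $(\Lambda)^\perp$ computed in $\K(\Proj\Lambda)$ onto $(D_\Lambda)^\perp$ computed in the whole of $\K(\Inj\Lambda)$ rather than in some proper subcategory. This is immediate because $T$ is defined on and onto the full homotopy categories, not merely on the (co)acyclic pieces; no genuine obstacle arises once Lemma~\ref{lem:restrictstothick} is in hand.
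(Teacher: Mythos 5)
Your proof is correct and follows the paper's approach: your treatment of (i) coincides with the paper's chain $\K_{\coac}(\Proj\Lambda)\Tcong\K_{\ac}(\Inj\Lambda)=(\lambda\Lambda)^{\perp}\Scong(\rho\RHom_\Lambda(D_\Lambda,\Lambda))^{\perp}$, and your compactness argument matches as well. For (iv) you transport (iii) across $T$ via Lemma~\ref{lem:restrictstothick}(ii), whereas the paper instead uses Lemma~\ref{lem:restrictstothick}(i) to identify $\K^{\bounded}(\Inj\Lambda)=\thick(\Add D_\Lambda)$ and works entirely inside $\K(\Inj\Lambda)$ --- a cosmetic distinction, as both routes are one-line applications of the same lemma.
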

\begin{proof}
Claims (ii) and (iii) were observed above. We obtain (i) from the following identifications
\[
\K_{\coac}(\Proj \Lambda) \Tcong \K_{\ac}(\Inj \Lambda) = (\lambda \Lambda)^{\perp} \Scong (\rho \Hom_\Lambda(D_\Lambda,\lambda\Lambda))^{\perp} = (\rho \RHom_\Lambda(D_\Lambda, \Lambda))^{\perp}. \]
For (iv), Lemma~\ref{lem:restrictstothick} implies $\K^{\bounded}(\Inj \Lambda)=\thick(\Add D_\Lambda)$ from which we infer
\[
\K_{\coac}(\Inj \Lambda) = (\Inj \Lambda)^{\perp} = \K^{\bounded}(\Inj \Lambda)^{\perp} = \thick(\Add D_\Lambda)^{\perp} = (D_\Lambda)^{\perp}.
\]

Finally, since $\lambda\Lambda$ is compact in $\K(\Inj \Lambda)$ it follows that $S(\lambda\Lambda)=\rho\RHom_\Lambda(D_\Lambda, \Lambda)$ is compact in $\K(\Proj \Lambda)$. Similarly, the compactness of $\Lambda$ in $\K(\Proj \Lambda)$ implies the compactness of $T(\Lambda)=D_\Lambda$ in $\K(\Inj \Lambda)$.
\end{proof}

\subsubsection*{Left orthogonal classes} Contrary to what one might expect, $\K_{\coac}(\Proj \Lambda)$ need not coincide with ${}^{\perp}(\Lambda)$ even for a noetherian ring $\Lambda$. In fact, exactness of $\Hom_{\Lambda}(X,\Lambda)$ fails to imply coacyclicity of a complex $X$ over $\Proj \Lambda$ already if $\Lambda$ is a complete local domain \cite[Remark~5.11]{MR2262932}. Knowing this, the below Proposition~\ref{prop:leftperpforartin} is more or less what one could hope for in the pursuit of describing (co)acyclicity as ${}^{\perp}(X)$ for a single object $X$. We will need the following observation, the proof of which uses the machinery of pure-injective modules and can be found in the Appendix.

\begin{lemma} \label{lem:naturalmonosplitartin}
If $\Lambda$ is an Artin algebra, then the natural monomorphism $\Lambda^{(I)} \to \Lambda^I$ is split for any index set $I$.
\end{lemma}

Below, $D_\Lambda$ refers to the dualizing complex from Example~\ref{ex:dualizingforartin}.

\begin{proposition} \label{prop:leftperpforartin}
Let $\Lambda$ be an Artin algebra. Then
\begin{enumerate}[label=\emph{(\roman*)}]
\item $\K_{\coac}(\Proj \Lambda) = {}^{\perp}(\Lambda)$ in $\K(\Proj\Lambda)$;
\item $\K_{\ac}(\Inj \Lambda) = {}^{\perp}(D_\Lambda)$ in $\K(\Inj\Lambda)$;
\item $\K_{\ac}(\Proj \Lambda) = {}^{\perp}(\rho D_\Lambda)$ in $\K(\Proj\Lambda)$;
\item $\K_{\coac}(\Inj \Lambda) = {}^{\perp}(D_\Lambda \otimes^{\mathbb{L}}_\Lambda D_\Lambda)$ in $\K(\Inj\Lambda)$.
\end{enumerate}
\end{proposition}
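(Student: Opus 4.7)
The plan is to establish the four parts via distinct but interlocking routes.

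The main obstacle is (i), since it requires upgrading left-perpendicularity against the single module $\Lambda$ to left-perpendicularity against all of $\Proj\Lambda$. The inclusion $\K_{\coac}(\Proj\Lambda) = {}^\perp(\Proj\Lambda) \subseteq {}^\perp\Lambda$ is immediate. For the converse, let $X \in {}^\perp\Lambda \subseteq \K(\Proj\Lambda)$ and let $P \in \Proj\Lambda$. Then $P$ is a summand of some free module $\Lambda^{(I)}$, which by Lemma~\ref{lem:naturalmonosplitartin} is itself a summand of $\Lambda^I$. Since products in $\K(\Modules\Lambda)$ are formed termwise and $\Hom_{\K}$ commutes with them in the second argument,
\[ \Hom_{\K}(X, \Lambda^I[i]) \cong \prod_{I}\Hom_{\K}(X, \Lambda[i]) = 0 \]
for every $i$, yielding $X \in {}^\perp P$.

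For (ii) and (iii), I will exploit the explicit description $D_\Lambda = \Hom_k(\Lambda, E)$ from Example~\ref{ex:dualizingforartin}. For any complex $Y$ of $\Lambda$-modules, the tensor--Hom adjunction provides an isomorphism $\HOM_\Lambda(Y, D_\Lambda) \cong \Hom_k(Y, E)$ of complexes of $k$-modules. As $E$ is injective over $k$, the $k$-dual is exact, so $H^i\Hom_k(Y,E) \cong \Hom_k(H^{-i}Y, E)$; since $E$ cogenerates, this vanishes for every $i$ if and only if $Y$ is acyclic. For (ii), this is precisely the test for $Y \in \K(\Inj\Lambda)$ to lie in ${}^\perp D_\Lambda$, since $\K(\Inj\Lambda) \hookrightarrow \K(\Modules\Lambda)$ is fully faithful. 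For (iii), the adjunction (inclusion $\dashv \rho$) between $\K(\Proj\Lambda)$ and $\K(\Modules\Lambda)$ identifies $\Hom_{\K(\Proj\Lambda)}(Y, \rho D_\Lambda[i]) \cong \Hom_{\K(\Modules\Lambda)}(Y, D_\Lambda[i])$, and the same acyclicity criterion applies.

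Finally, (iv) will be obtained from (iii) by transport along the Iyengar--Krause equivalence $T = -\otimes_\Lambda D_\Lambda$ of Theorem~\ref{thm:iyengarkrause}. By Lemma~\ref{lem:restrictstothick}(ii), $T$ restricts to an equivalence $\K_{\ac}(\Proj\Lambda) \simeq \K_{\coac}(\Inj\Lambda)$; hence the left orthogonal of $\rho D_\Lambda$ in $\K(\Proj\Lambda)$ is carried to the left orthogonal of $T(\rho D_\Lambda)$ in $\K(\Inj\Lambda)$. Since $\rho D_\Lambda$ is a projective resolution of $D_\Lambda$ (Proposition~\ref{prop nice proj resol} applied in the derived category), we have $T(\rho D_\Lambda) = \rho D_\Lambda \otimes_\Lambda D_\Lambda \cong D_\Lambda \otimes^{\mathbb L}_\Lambda D_\Lambda$, so (iii) transports to (iv).
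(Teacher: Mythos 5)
Your argument is correct, and for parts (ii) and (iii) it takes a genuinely different route from the paper. The paper proves (ii) by transporting (i) through the Iyengar--Krause equivalences $T$ and $S$ (via Lemma~\ref{lem:restrictstothick}), and for (iii) it observes $\Inj\Lambda = \Prod D_\Lambda$ to reduce to ${}^\perp(D_\Lambda)$ and then invokes the adjunction for $\rho$. You instead attack (ii) and (iii) directly by computing $\Hom_{\K}(Y, D_\Lambda[i]) \cong \Hom_k(H^{-i}Y, E)$ from the tensor--Hom adjunction and exactness of the standard duality; this is more elementary and, once $\rho$ is brought in via its adjunction, handles (iii) in the same stroke. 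What the paper's route buys is economy (each part is one line once the equivalences are set up) and it never needs the explicit shape of $D_\Lambda$ beyond Example~\ref{ex:dualizingforartin}; what your route buys is that (ii) and (iii) become self-contained and do not depend on Lemma~\ref{lem:restrictstothick} at all. Your proof of (i) is the same as the paper's (splitting $\Lambda^{(I)} \hookrightarrow \Lambda^I$ and the termwise description of products), and your proof of (iv) is also the same transport argument. One small inaccuracy: in (iv) you cite Proposition~\ref{prop nice proj resol} to say $\rho D_\Lambda$ is a projective resolution, but that proposition requires the terms of the complex to have finite projective dimension, which an injective module over an Artin algebra need not have; it does not literally apply here. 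This does not affect the conclusion, since the identification $T(\rho D_\Lambda) = \rho D_\Lambda \otimes_\Lambda D_\Lambda$ follows from the definition of $T$, and the equality with $D_\Lambda \otimes^{\mathbb L}_\Lambda D_\Lambda$ is the same (unproved) identification the paper itself makes at this step, but the citation should be removed or replaced.
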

\begin{proof}
To prove (i) observe that Lemma~\ref{lem:naturalmonosplitartin} implies $\Proj \Lambda = \Prod \Lambda$, which yields $\K_{\coac}(\Proj \Lambda)={}^{\perp}(\Proj \Lambda)={}^{\perp}(\Prod \Lambda)={}^{\perp}(\Lambda)$. Now (ii) follows once we identify
\[
\K_{\ac}(\Inj \Lambda)\Scong\K_{\coac}(\Proj \Lambda) = {}^{\perp}(\Lambda) \Tcong {}^{\perp}(D_\Lambda).
\]
For (iii), notice that we also have $\Inj \Lambda = \Prod D_\Lambda$ for our dualizing complex. Moreover, a complex $X$ is acyclic if and only if $\Hom_\Lambda(X,I)$ is exact for each $I\in\Inj \Lambda$. Hence in $\K(\Modules \Lambda)$ we have
\[
\K_{\ac}(\Proj \Lambda) = {}^{\perp}(\Inj \Lambda) \cap \K(\Proj \Lambda) = {}^{\perp}(\Prod D_\Lambda) \cap \K(\Proj \Lambda) = {}^{\perp}(D_\Lambda) \cap \K(\Proj \Lambda).
\]
Observe next that by adjunction we have $\Hom_{\K}(P, \rho D_\Lambda) = \Hom_{\K}(P, D_\Lambda)$ for each complex $P$ over $\Proj \Lambda$, which reveals that $\K_{\ac}(\Proj \Lambda)={}^{\perp}(\rho D_\Lambda)$ in $\K(\Proj \Lambda)$. Now (iv) is implied by identifying
\[
\K_{\coac}(\Inj \Lambda) \Scong \K_{\ac}(\Proj \Lambda) = {}^{\perp}(\rho D_\Lambda) \Tcong {}^{\perp}(\rho D_\Lambda \otimes_\Lambda D_\Lambda) = {}^{\perp}(D_\Lambda \otimes^{\mathbb L}_\Lambda D_\Lambda). \qedhere
\]
\end{proof}

\begin{remark} \label{remark:fingencoacyclic}
For an additive category $\A$ one lets $\Ktac(\A)={}^{\perp}(\A) \cap (\A)^{\perp}$ upon viewing $\A$ as the stalk complexes of $\K(\A)$. In this tradition, if $R$ is a ring, then $\K_{\coac}(\proj R)$ should be defined as $\K(\proj R) \cap {}^{\perp}(\proj R)$. On the other hand, it would also be natural to define $\K_{\coac}(\proj R)$ as $\K(\proj R) \cap \K_{\coac}(\Proj R)$. Thankfully, this seeming conflict solves itself. Indeed, the fact that each finitely generated module is compact in $\Modules R$ implies that ${}^{\perp}(\proj R) = {}^{\perp}(\Proj R)$ as subcategories of $\K(\proj R)$, so the competing definitions agree. In particular, and useful in Section~\ref{section:singularitycategories} below, this means that a complex $P$ over $\proj R$ does belong to $\K_{\coac}(\proj R)$ if $\Hom_{\K}(P,R)=0$.
\end{remark}

\subsection{Gorenstein homological algebra}
\label{subGorHomAlg}
Let $R$ be a ring. An $R$-module is \emph{Gorenstein projective} if it appears as the $0$-boundaries of a totally acyclic complex over $\Proj R$. The Gorenstein projective $R$-modules form a Frobenius exact subcategory $\GProj R$ of $\Modules R$, and assigning $X \mapsto \B^0(X)$ gives a triangle equivalence
\[
\Ktac(\Proj R)\cong\GStable R.
\]
Provided $R$ is right noetherian, $\Gproj R = \GProj R \cap \modules R$ is again Frobenius exact in $\modules R$, and
\[
\Ktac(\proj R)\cong\Gstable R
\]
by restriction of the above equivalence. Dually, an $R$-module is called \emph{Gorenstein injective} if it is isomorphic to the $0$-cycles of some totally acyclic complex over $\Inj R$, and assigning $Y \mapsto \B^0(Y)$ gives triangle equivalences
\[
\Ktac(\Inj R)\cong\GcoStable R\text { \,and } \Ktac(\inj R)\cong\Gcostable R.
\]
Notice that if $R$ happens to be noetherian with a dualizing complex, then
\[
\GStable R \cong \GcoStable R \text{ \,and } \Gstable R \cong \Gcostable R
\]
by Lemma~\ref{lem:restrictstothick}.

If $\Gamma$ is an Artin algebra, then the duality between $\Gproj \Gamma$ and $\Ginj \Gamma$ is pleasant enough to ensure $\left(\Gproj \Gamma\right)^{\perp}= {}^{\perp}\!\left(\Ginj \Gamma\right)$ in $\modules \Gamma$ (see \cite{MR2138374}). However, this does not necessarily hold true for big modules. To amend this oddity, in \cite{MR2327478} the class of \emph{virtually Gorenstein} algebras was introduced as the algebras $\Gamma$ for which $\left(\GProj \Gamma \right)^{\perp}={}^{\perp}\!\left(\GInj \Gamma\right)$. We remark that the class of virtually Gorenstein algebras is rather large. Indeed, it contains the algebras of finite representation type and is closed under derived equivalence. In fact it seems that the first example of an Artin algebra which is not virtually Gorenstein appeared as recently as \cite[Example~4.3]{MR2524651}.

Denote now by $\Lambda$ a noetherian ring. The \emph{singularity category} of $\Lambda$ is the Verdier quotient
\[
\Dsg(\Lambda)=\Db(\modules \Lambda)/\!\perf \Lambda
\]
introduced in \cite{buchweitz1986maximal}. Notice that there is a natural triangle functor
\[
\iota \colon \Gstable\Lambda\to \Dsg(\Lambda).
\]
Indeed, upon identifying $\Gstable \Lambda=\Ktac(\proj \Lambda)$ and $\Db(\modules \Lambda)=\K^{-, \bounded}(\proj \Lambda)$, $\iota$ is given by assigning to a totally acyclic complex $X$ over $\proj \Lambda$ the object in $\Dsg(\Lambda)$ represented by its brutal truncation $\tau^0 X$.

Below, (i) is the fundamental theorem of Buchweitz \cite[Theorem~4.4.1]{buchweitz1986maximal}. The partial converses in (ii) are due to \cite{MR1780017,MR3356832} (see also \cite{MR2790881} for a relative version).

\begin{theorem} \label{thm:embeddingofgproj}
The functor $\iota$ is fully faithful. Moreover,
\begin{enumerate}[label=\emph{(\roman*)}]
\item if $\Lambda$ is Gorenstein, then $\iota$ gives an equivalence $\Gstable \Lambda \cong\Dsg(\Lambda)$;
\item if $\Lambda$ is commutative local or an Artin algebra such that $\iota$ is an equivalence, then $\Lambda$ is Gorenstein.
\end{enumerate}
\end{theorem}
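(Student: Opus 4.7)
The plan is to prove the three assertions separately, using full faithfulness as the foundation.

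For full faithfulness of $\iota$, I would work through the identification $\Gstable\Lambda \cong \Ktac(\proj\Lambda)$. Take $X, Y \in \Ktac(\proj\Lambda)$ and set $M = \B^0 X$, $N = \B^0 Y$. The brutal truncation $\tau^{\geq 0} X$ is then a projective resolution of $M$, so it represents $M$ in $\Db(\modules\Lambda)$. To compute $\Hom_{\Dsg}(M,N)$, I would exploit the fact that in $\Dsg$ the short exact sequence $0\to \Omega M\to P\to M\to 0$ with $P$ perfect yields $M \cong \Omega M[1]$, so that for every $n \geq 0$
\[
\Hom_{\Dsg(\Lambda)}(M,N) \cong \Hom_{\Dsg(\Lambda)}(\Omega^n M, \Omega^n N).
\]
Choosing $n$ so large that both $\Omega^n M$ and $\Omega^n N$ are represented by the shifted totally acyclic complexes $X[-n]$ and $Y[-n]$, a standard roof-clearing argument (lifting the backward leg of any roof along the acyclic tails of $X$ or $Y$) identifies $\Hom_{\Dsg}$ with $\Hom$ in $\modules \Lambda$ modulo maps factoring through a projective, i.e.\ with $\Hom_{\K(\proj)}(X,Y)$. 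The reverse map $\Hom_{\K(\proj)}(X,Y) \to \Hom_{\Dsg}(M,N)$ is well-defined since $\tau^{\geq 0}$ is a functor and $\Db \to \Dsg$ is the Verdier quotient.

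For (i), under the Gorenstein hypothesis every finitely generated $M$ satisfies $\Omega^d M \in \Gproj\Lambda$, where $d$ is the common self-injective dimension of $\Lambda$; indeed the vanishing of $\Ext^{>d}_\Lambda(M,\Lambda)$ ensures that the tail of a projective resolution of $M$ glues with a coresolution to form a totally acyclic complex. The isomorphism $M\cong \Omega^d M [d]$ in $\Dsg(\Lambda)$ then places $M$ in the essential image of $\iota$, and since $\Db(\modules\Lambda)$ is generated as a triangulated category by $\modules\Lambda$, this image exhausts $\Dsg(\Lambda)$. Combined with full faithfulness, $\iota$ is an equivalence.

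The converse direction (ii) is where I expect the main obstacle to lie. If $\iota$ is an equivalence, then every $M\in\modules\Lambda$ is isomorphic in $\Dsg(\Lambda)$ to some $G\in\Gproj\Lambda$, which is precisely to say that every finitely generated $\Lambda$-module has finite Gorenstein projective dimension; the task is to upgrade this uniform finiteness to finite self-injective dimension of $\Lambda$. In the Artin case I would transport the statement through the duality $\Hom_k(-,E)$ of Example~\ref{ex:dualizingforartin}, converting the Gorenstein projective bound on one side into a Gorenstein injective bound on the other, and then extract finite injective dimension of $\Lambda$ itself. In the commutative local case the plan would be to apply the hypothesis to the residue field $\Lambda/\mathfrak m$ and invoke an Auslander--Bridger-type comparison to deduce Gorensteinness. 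Unlike the first two parts, which follow formally from the identification $\Gstable = \Ktac(\proj)$ and the syzygy--shift correspondence in $\Dsg$, this step requires genuine Gorenstein-homological input: passing from ``uniformly finite Gorenstein projective dimension'' to ``finite injective dimension of the ring'' is the content of the more delicate arguments in \cite{MR1780017,MR3356832}.
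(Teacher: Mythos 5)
The paper does not actually prove this theorem; it records it as background, attributing the full faithfulness of $\iota$ and part (i) to Buchweitz \cite[Theorem~4.4.1]{buchweitz1986maximal} and the converses in (ii) to \cite{MR1780017,MR3356832}. So there is no in-paper argument to compare against, and a citation alone would have matched the paper's treatment.

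Evaluating your sketch on its own merits: part (i) is correct and standard. For the full faithfulness claim, though, the real content is exactly the step you call a ``standard roof-clearing argument,'' and as stated it has a gap. Given a roof $M\xleftarrow{s} Z\xrightarrow{} N$ in $\Dsg(\Lambda)$ with $\Cone(s)\in\perf\Lambda$, what makes the denominator disposable is not the acyclicity of the complete resolutions $X,Y$ but their \emph{total} acyclicity: one needs $\Ext^{i}_{\Lambda}(\B^j(X),\Lambda)=0$ for $i>0$, i.e.\ the vanishing of $\Hom_{\K}(X,\Lambda[i])$, in order to lift maps against the perfect cone of $s$. Without invoking this half of total acyclicity the lifting does not go through, and this is precisely the homological input hidden in Buchweitz's lemma. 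Also, the phrase ``choosing $n$ so large that $\Omega^n M$ and $\Omega^n N$ are represented by $X[-n]$ and $Y[-n]$'' is off: since $M,N$ are already Gorenstein projective, all their syzygies are too, and they are represented by (truncations of) $X[-n],Y[-n]$ for \emph{every} $n\ge 0$; the purpose of increasing $n$ in the argument is rather to push the perfect cone out of the relevant degree range, which again relies on the $\Ext$-vanishing above. For part (ii) you quite correctly flag that one cannot avoid the arguments of \cite{MR1780017,MR3356832}; the proposed reductions (apply the hypothesis to the residue field in the local case, transport through $D=\Hom_k(-,E)$ in the Artin case) are reasonable opening moves, but by your own admission they are not a proof, and that is consistent with the paper simply citing those references.
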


\subsection{Compactly generated completions} \label{subsection:compactcompletions}
When $\Lambda$ is a noetherian ring, the triangulated categories $\Db(\modules \Lambda)$ and $\Dsg(\Lambda)$ may be realized as the subcategories of compact objects in familiar compactly generated triangulated categories. If $\Lambda$ is moreover virtually Gorenstein, then the same goes for $\Gstable \Lambda$. We now explain how these embeddings come about.

\begin{theorem}\label{thm:compactcompletions}
If $\Lambda$ is a noetherian ring, then we have the following.
\begin{enumerate}[label=\emph{(\roman*)}]
\item $\K(\Inj \Lambda)$ is compactly generated with $\K(\Inj \Lambda)^{\cc} \cong \Db(\modules \Lambda)$;
\item $\K_{\ac}(\Inj \Lambda)$ is compactly generated with $\K_{\ac}(\Inj \Lambda)^{\cc} \cong \Dsg(\Lambda)$;
\item $\Ktac(\Inj \Lambda)$ is compactly generated if $\Lambda$ admits a dualizing complex. If moreover $\Lambda$ is an Artin algebra, then $\Ktac(\Inj \Lambda)^{\cc}\cong\Gstable \Lambda$ if and only if $\Lambda$ is virtually Gorenstein.
\end{enumerate}
\end{theorem}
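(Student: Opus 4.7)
The plan is to dispatch (i) and (ii) by invoking results of Krause, and then treat (iii) in two steps: first compact generation, then identification of the compacts in the Artin case.

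For (i) and (ii), both statements are due to Krause \cite{MR2157133}. Briefly, the injective resolutions $\lambda M$ for $M \in \modules \Lambda$ are compact in $\K(\Inj \Lambda)$ and generate it, so passing to homology yields $\K(\Inj \Lambda)^{\cc} \cong \Db(\modules \Lambda)$. For (ii), the canonical functor $\K(\Inj \Lambda) \to \D(\Lambda)$ realises $\K_{\ac}(\Inj \Lambda)$ as a compactly generated smashing kernel, with compacts identifying as $\Db(\modules \Lambda)/\perf \Lambda = \Dsg(\Lambda)$.

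For the first half of (iii), suppose $\Lambda$ admits a dualizing complex $D_\Lambda$. Combining Proposition~\ref{prop:dualizingtestscoacyclicity}(ii) and (iv) we can write
\[ \Ktac(\Inj \Lambda) \,=\, (\lambda\Lambda)^{\perp} \cap (D_\Lambda)^{\perp} \,=\, \{\lambda\Lambda, D_\Lambda\}^{\perp} \]
inside $\K(\Inj \Lambda)$, and both generators are compact objects by Proposition~\ref{prop:dualizingtestscoacyclicity} together with part (i). Since the right orthogonal of a set of compact objects in a compactly generated triangulated category is itself compactly generated---being equivalent to the Verdier quotient by the localizing subcategory they span, cf.\ Neeman \cite[Theorem~4.1]{MR1308405}---this yields compact generation of $\Ktac(\Inj \Lambda)$.

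For the second half of (iii), assume $\Lambda$ is an Artin algebra, so that the standard duality $D_\Lambda$ of Example~\ref{ex:dualizingforartin} ensures the hypothesis of the first half is in force. Assigning $0$-cycles gives an equivalence $\Ktac(\Inj \Lambda) \cong \GcoStable \Lambda$, and Lemma~\ref{lem:restrictstothick} further produces an equivalence $\GcoStable \Lambda \cong \GStable \Lambda$, reducing the question to whether $\Gstable \Lambda$ exhausts the compact objects of $\GStable \Lambda$. Since finitely generated Gorenstein projectives are always compact in $\GStable \Lambda$, what remains is the bi-implication with the virtually Gorenstein condition $(\GProj\Lambda)^{\perp} = {}^{\perp}(\GInj \Lambda)$ in $\Modules \Lambda$, which I would quote from \cite{MR2327478}. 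The principal obstacle is precisely this equivalence: translating equality of compact objects back into the perpendicular-class equality in $\Modules \Lambda$ requires a careful analysis of approximations by Gorenstein projectives and the behaviour of duality on big modules, essentially the technical content of virtual Gorensteinness as developed in \cite{MR2327478}.
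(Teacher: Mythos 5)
Your argument is correct and follows essentially the same strategy as the paper: quote Krause for (i) and (ii), realize $\Ktac(\Inj\Lambda)$ as the right orthogonal of a set of compact objects in a compactly generated category, and reduce the final claim to the known characterization of compacts in $\GStable\Lambda$ via virtual Gorensteinness. The only cosmetic differences are that you identify $\Ktac(\Inj\Lambda) = \{\lambda\Lambda, D_\Lambda\}^{\perp}$ in one step inside $\K(\Inj\Lambda)$ where the paper passes through $\K_{\ac}(\Inj\Lambda)$ first, and that the iff with virtual Gorensteinness should be cited from \cite{MR2737805} (combined with \cite[Theorem~5]{MR2524651}) rather than from \cite{MR2327478}, which only introduces the notion.
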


\begin{proof}[Idea of proof]
Claims (i) and (ii) are due to Krause \cite{MR2157133} --- let us give a brief account of his argument. It is straightforward to verify that the injective resolution of a finitely generated module is compact in $\K(\Inj \Lambda)$, from which it follows that $\K(\Inj \Lambda)$ is compactly generated. In other words, taking injective resolutions embeds $\Db(\modules \Lambda)$ as the compact objects of $\K(\Inj \Lambda)$.

Recall that $\K_{\ac}(\Inj\Lambda)$ is the subcategory $(\lambda\Lambda)^{\perp}$ of $\K(\Inj \Lambda)$ by Proposition~\ref{prop:dualizingtestscoacyclicity}, so in particular it is closed under coproducts. As the compact generation of $\K(\Inj\Lambda)$ has already been established, we may invoke \cite{MR1191736} which constructs a compact preserving left adjoint $I_\lambda$ to the inclusion $I\colon\K_{\ac}(\Inj\Lambda)\to\K(\Inj\Lambda)$. As such an $I_\lambda$ automatically takes a set of generating objects to a set of generating objects, the compact generation of $\K_{\ac}(\Inj \Lambda)$ follows. Now Theorem~\ref{thm:neemanthreeadjoints} yields the existence of a recollement
\[
\K_{\ac}(\Inj \Lambda) \hookrightarrow \K(\Inj \Lambda) \to  \D(\Modules \Lambda)
\]
which induces an equivalence up to direct summands
\[
\K_{\ac}(\Inj \Lambda)^{\cc}\cong\K(\Inj \Lambda)^{\cc}/\D(\Modules \Lambda)^{\cc},
\]
and the fact that $\D(\Modules \Lambda)^{\cc} = \perf \Lambda$ is well known.

We now turn to (iii). As in the previous paragraph, the compact generation of $\Ktac(\Inj \Lambda)$ follows from the compact generation of $\K_{\ac}(\Inj\Lambda)$ since $\Ktac(\Inj \Lambda)$ is the subcategory $(D_\Lambda)^{\perp}$ of $\K_{\ac}(\Inj\Lambda)$ by Proposition~\ref{prop:dualizingtestscoacyclicity}. For the last claim, $\Gstable \Lambda \subset \left(\GStable \Lambda\right)^{\cc}$ is always true. By \cite{MR2737805}, the reversed inclusion holds precisely when $\Lambda$ is virtually Gorenstein, which is not entirely surprising once one learns that $\Lambda$ is virtually Gorenstein if and only if each Gorenstein projective module is a filtered colimit of finitely generated Gorenstein projective modules \cite[Theorem~5]{MR2524651}. Hence the proof is complete since $\GStable \Lambda \cong \Ktac(\Inj \Lambda)$.
\end{proof}

In the above, the homotopy categories appearing are over $\Inj \Lambda$, even though Lemma~\ref{lem:restrictstothick} suggests that we could equally well work over $\Proj \Lambda$. Indeed, each relevant property will formally carry over by transport of structure, but homotopy categories over $\Inj \Lambda$ seem to be intrinsically better behaved than their counterparts over $\Proj \Lambda$. For instance, and crucial above, if $M$ is a finitely generated $\Lambda$-module, then $\lambda M$ is compact in $\K(\Inj \Lambda)$ while $\rho M$ need not be compact in $\K(\Proj \Lambda)$. Indeed, $\rho M$ is compact in $\K(\Proj\Lambda)$ if and only if $M$ is compact in $\D(\Modules\Lambda)$, i.e.\ precisely when $M$ has finite projective dimension.

A further reason for preferring injectives is that generalizations are then more often within reach. Indeed, in the above (i) and (ii) one may replace $\Modules \Lambda$ by any locally noetherian Grothendieck category $\mathcal A$ such that $\D(\mathcal A)$ is compactly generated, which happens for instance when $\mathcal A$ has finite global dimension. On the other hand, there is no reason why such an $\mathcal A$ should even have enough projectives. Nevertheless, Jørgensen \cite{MR2132765} showed directly, i.e.\ with no allusion to the above, that $\K(\Proj \Lambda)$ is compactly generated if $\Lambda$ is noetherian. Funnily enough, the compact objects of $\K(\Proj \Lambda)$ arise as $\Hom_{\Lambda^{\op}}(\rho M,\Lambda)$ for $M\in \modules \Lambda^{\op}$, and it turns out that $\K(\Proj \Lambda)^{\cc}$ is naturally equivalent to $\Db(\modules \Lambda^{\op})^{\op}$. Note that the latter should be expected in light of the Grothendieck-type duality of Theorem~\ref{thm:duality}.

For later reference we end with a porism of Theorem~\ref{thm:compactcompletions}. Below, the statements involving $\Proj \Lambda$ follow from those involving $\Inj\Lambda$ by restricting the equivalence $\K(\Inj\Lambda)\cong\K(\Proj\Lambda)$.

\begin{observation} \label{obs:leftandrightadjoints}
Let $\Lambda$ be a noetherian ring admitting a dualizing complex. Then
\begin{enumerate}[label=\emph{(\roman*)}]
\item the inclusions $\K_{\ac}(\Inj\Lambda)\hookrightarrow\K(\Inj\Lambda)$ and $\K_{\coac}(\Proj\Lambda)\hookrightarrow\K(\Proj\Lambda)$ both admit a left and a right adjoint;
\item the inclusions $\Ktac(\Inj \Lambda)\hookrightarrow\K_{\ac}(\Inj\Lambda)$ and $\Ktac(\Proj \Lambda)\hookrightarrow\K_{\coac}(\Proj\Lambda)$ both admit a left and a right adjoint.
\end{enumerate}
\end{observation}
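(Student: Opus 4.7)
The plan is to deduce everything from Neeman's adjoint functor theorem \ref{thm:neemanthreeadjoints}, once we verify that each source category is compactly generated and that each inclusion preserves both coproducts and products. The crucial input is Theorem~\ref{thm:compactcompletions}, which (together with transport of structure along the Iyengar--Krause equivalence $T\colon\K(\Proj\Lambda)\xrightarrow{\approx}\K(\Inj\Lambda)$ from Theorem~\ref{thm:iyengarkrause}, which by Lemma~\ref{lem:restrictstothick} matches $\K_{\ac}(\Proj\Lambda)$ with $\K_{\coac}(\Inj\Lambda)$, $\K_{\coac}(\Proj\Lambda)$ with $\K_{\ac}(\Inj\Lambda)$, and $\Ktac(\Proj\Lambda)$ with $\Ktac(\Inj\Lambda)$) establishes that all six categories appearing as source or target in (i) and (ii) are compactly generated.

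Second, I would invoke Proposition~\ref{prop:dualizingtestscoacyclicity} to write each of $\K_{\ac}(\Inj\Lambda)$, $\K_{\coac}(\Proj\Lambda)$, $\K_{\ac}(\Proj\Lambda)$, $\K_{\coac}(\Inj\Lambda)$ as a right orthogonal class $(X)^{\perp}$ of a compact object $X$ in the appropriate ambient homotopy category. Any right orthogonal class is closed under products in the ambient category, while the compactness of $X$ ensures that $(X)^{\perp}$ is also closed under coproducts. For (ii), $\Ktac(\Inj\Lambda)$ is an intersection of two such orthogonal classes and hence is also closed under both products and coproducts, both inside $\K(\Inj\Lambda)$ and inside $\K_{\ac}(\Inj\Lambda)$; the same is true mutatis mutandis for $\Ktac(\Proj\Lambda)\subseteq\K_{\coac}(\Proj\Lambda)$.

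Third, I would conclude: each of the four inclusions in question is an exact functor whose source is compactly generated and whose image is closed under products and coproducts in the target. Because the subcategory is closed under the ambient (co)products, these (co)products also serve as the (co)products in the subcategory, so the inclusion preserves them. Theorem~\ref{thm:neemanthreeadjoints}(i) then supplies a right adjoint and Theorem~\ref{thm:neemanthreeadjoints}(ii) supplies a left adjoint, which proves both parts of the observation.

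There is no real obstacle here; the main point to be careful about is simply the bookkeeping for the nested inclusion in (ii), where one must check closure under (co)products relative to the intermediate category $\K_{\ac}(\Inj\Lambda)$ rather than only relative to $\K(\Inj\Lambda)$. This is immediate from the description of $\Ktac$ as an orthogonal class, together with the fact that $\K_{\ac}(\Inj\Lambda)$ is itself closed under the ambient (co)products.
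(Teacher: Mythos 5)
Your proof is correct and follows essentially the route the paper has in mind: the Observation is presented as a ``porism'' of Theorem~\ref{thm:compactcompletions}, whose proof already establishes (via Proposition~\ref{prop:dualizingtestscoacyclicity}, Theorem~\ref{thm:neemanthreeadjoints}, and compact generation) the recollements supplying the left and right adjoints in question. The ingredients you single out---compact generation of the source categories, the orthogonal-class descriptions by compact objects, closure under ambient products and coproducts, and transport along the Iyengar--Krause equivalence using Lemma~\ref{lem:restrictstothick}---are exactly the ones the paper implicitly relies on.
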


\subsection{Contravariant finiteness and torsion pairs}
\label{lastsubsection}
Let $\A$ be a category with a subcategory $\B$. A \emph{right $\B$-approximation} of $A\in\A$ is a morphism $B\to A$ with $B\in\B$ through which each morphism $B'\to A$ with $B'\in\B$ factors. $\B$ is called \emph{contravariantly finite} in $\A$ if each $A\in\A$ admits a right $\B$-approximation. The dual notions are those of a \emph{left approximation} and a \emph{covariantly finite subcategory}, respectively. In the following sense, it is easy to generate such categories.

\begin{lemma} \label{lem:covariantlyfinite}
Let $\A$ be a category and $\B$ a skeletally small subcategory. If $\A$ has coproducts, then $\Add\B$ is contravariantly finite in $\A$. Dually, if $\A$ has products, then $\Prod\B$ is covariantly finite in $\A$.
\end{lemma}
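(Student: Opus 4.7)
The plan is to prove the contravariant finiteness assertion directly by constructing an explicit right approximation; the covariant case then follows formally by dualization.

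Given $A\in\A$, I would build a right $\Add\B$-approximation as follows. First, invoking skeletal smallness, fix a set $\SSS$ of representatives of the isomorphism classes of $\B$. The collection of pairs $(B,f)$ with $B\in\SSS$ and $f\in\A(B,A)$ is then a genuine set, so one can form the coproduct $P=\coprod_{(B,f)} B$ in $\A$. By construction $P$ lies in $\Add\B$, and the components $f$ assemble into a single morphism $\phi\colon P\to A$.

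To verify the approximation property, take any $g\colon B'\to A$ with $B'\in\Add\B$. By the definition of $\Add\B$, there is a family $\{B_k\}$ in $\B$ and a split monomorphism $\iota\colon B'\to\coprod_k B_k$ with retraction $\pi$. For each $k$, the composite $B_k\to\coprod_k B_k\xrightarrow{\pi}B'\xrightarrow{g}A$ can, after replacing $B_k$ by its representative in $\SSS$, be recognized as one of the indexed pairs $(B,f)$, and hence factors through $\phi$ via the canonical inclusion $B_k\to P$. Assembling these componentwise produces a morphism $\coprod_k B_k\to P$ whose composition with $\phi$ is $g\pi$, and precomposing with $\iota$ delivers the required factorization of $g$ through $\phi$.

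The covariant finiteness claim is proved by literal dualization: under the existence of products, one instead forms $P=\prod_{(B,f)} B$ indexed by pairs $(B,f)$ with $B\in\SSS$ and $f\in\A(A,B)$, obtaining a canonical morphism $A\to P$, and the same d\'evissage through split epimorphisms onto summands yields left $\Prod\B$-approximations. I foresee no essential obstacle in either direction; the only subtle point is ensuring that the class of morphisms from (respectively, to) $A$ used for indexing really forms a set, which is exactly what the skeletal smallness hypothesis on $\B$ provides.
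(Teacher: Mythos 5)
Your proof is correct and follows essentially the same route as the paper: form the coproduct of all objects of $\B$ (up to isomorphism) indexed by their morphisms to $A$, and observe that the resulting canonical map is a right $\Add\B$-approximation. You supply the verification of the approximation property that the paper leaves implicit, but the underlying construction is identical.
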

\begin{proof}
   Assume that $\A$ has coproducts. For $A \in \A$, let $I$ denote the collection of all morphisms $B_i\to A$ with $B_i\in\B$. Then $I$ is a set and the canonical morphism
   \[
   \coprod_{i\in I} B_i \to A
   \]
   is a right $\Add \B$-approximation. The remaining claim is dual.
\end{proof}

A basic problem in Gorenstein homological algebra is determining when $\GProj R$ is contravariantly finite in $\Modules R$. For some time, an affirmative answer could only be given under rather strong restrictions --- see \cite[Theorem~2.9]{MR1355071} and \cite[Theorem~3.4]{MR1432346}. Jørgensen vastly improved on these when he showed in \cite{MR1859047} that any module over an Artin algebra admits a right approximation by Gorenstein projective modules, and later used similar techniques in order to extend his theorem to the one below. We remark that one could hope to go even further, as there seems to be no known example of a ring $R$ such that $\GProj R$ fails to be contravariantly finite in $\Modules R$.

\begin{theorem}[Jørgensen \cite{MR2283103}] \label{thm:jorgensenGProjcontfinite}
Consider either of the following two situations.
\begin{enumerate}[label=\emph{(\roman*)}]
  \item $\Lambda$ is a commutative noetherian ring admitting a dualizing complex.
  \item $\Lambda$ is a left coherent, right noetherian algebra over a field $k$ for which there exists a left noetherian $k$-algebra $\Gamma$ and a dualizing complex ${}_{\Gamma} D_\Lambda$ in the sense of \textnormal{\cite{MR1674648}}.
\end{enumerate}
Then $\GProj \Lambda$ is contravariantly finite in $\Modules \Lambda$.
\end{theorem}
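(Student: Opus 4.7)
My plan is to mimic Jørgensen's approach: lift the problem to $\K(\Proj\Lambda)$, use compact generation and Brown representability to build a right adjoint to the inclusion of totally acyclic complexes, and then descend to modules via zero-cocycles. The first task is to establish that $\Ktac(\Proj\Lambda)$ is compactly generated. In case (i) this follows at once from Theorem~\ref{thm:compactcompletions}(iii) combined with the equivalence $\Ktac(\Proj\Lambda)\cong\Ktac(\Inj\Lambda)$ of Lemma~\ref{lem:restrictstothick}(iv). In case (ii) no dualizing complex on $\Lambda$ alone is available, but the two-ring object ${}_\Gamma D_\Lambda$ from \cite{MR1674648} still induces an equivalence between $\K(\Proj\Lambda)$ and a homotopy category of injectives on the $\Gamma$-side, through which the mechanism underlying Theorem~\ref{thm:compactcompletions}(iii) can be run to produce a compact generating set for $\Ktac(\Proj\Lambda)$.

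Given this, Theorem~\ref{thm:neemanthreeadjoints}(i) immediately yields a right adjoint $\sigma$ to the inclusion $\iota\colon\Ktac(\Proj\Lambda)\hookrightarrow\K(\Proj\Lambda)$, with counit $\varepsilon\colon\iota\sigma\to\id$. For $M\in\Modules\Lambda$, I would set $T:=\sigma(\rho M)$, where $\rho M\in\K(\Proj\Lambda)$ is the projective resolution of the stalk complex $M$ from Subsection~\ref{subsection:projres}; under the equivalence $\Ktac(\Proj\Lambda)\cong\GStable\Lambda$, the object $T$ corresponds to $G:=\B^0(T)\in\GProj\Lambda$. Applying $\B^0$ to $\varepsilon_{\rho M}\colon T\to\rho M$ and composing with the canonical surjection $(\rho M)^0\twoheadrightarrow M$ produces a canonical map $\varphi\colon G\to M$. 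This is well-defined on homotopy classes because any chain-homotopy contribution to $\varepsilon^0$ restricted to $\B^0(T)$ lands in $\Imm d_{\rho M}^{-1}$ and hence is annihilated by the projection to $M$.

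To verify the approximation property, let $G'\in\GProj\Lambda$ and $g\colon G'\to M$, and pick a complete resolution $T'\in\Ktac(\Proj\Lambda)$ with $\B^0(T')=G'$. The non-positive truncation of $T'$ is a projective resolution of $G'$, so the comparison theorem lifts $g$ to a chain map from this truncation into $\rho M$; since $(\rho M)^n=0$ for $n\geq 1$, it extends by zero to a chain map $\widetilde g\colon T'\to\rho M$ in $\K(\Proj\Lambda)$. The adjunction $(\iota,\sigma)$ then factors $\widetilde g$ through $\varepsilon$ as $T'\to T\xrightarrow{\varepsilon}\rho M$, and applying $\B^0$ recovers the required factorization of $g$ as $G'\to G\xrightarrow{\varphi}M$.

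The main technical obstacle is the first step in case (ii): transporting compact generation through a dualizing complex that lives over two different rings is more delicate than the one-ring situation and forces genuine engagement with the conventions of \cite{MR1674648}. By comparison, the construction and verification of $\varphi$ are largely formal once the adjoint $\sigma$ is in hand; the only subtle point there is checking the homotopy-invariance of $\B^0$ on counits, which is the observation about $\Imm d_{\rho M}^{-1}$ noted above.
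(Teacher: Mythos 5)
The paper does not itself prove this statement; it is quoted from Jørgensen, and the closest thing the paper offers is the one-ring variant, Corollary~\ref{cor:GProjcontfinite}, proved via the machinery of Section~\ref{section approximationsofmodules} (Lemma~\ref{lem:preimageofcontfiniteiscontfinite}, Proposition~\ref{prop:contfiniteboundaries}, Lemma~\ref{lem:rightadjointimpiescontfinite}). Your overall strategy --- lift to $\K(\Proj\Lambda)$, use compact generation of $\Ktac(\Proj\Lambda)$ to get a right adjoint to the inclusion, then descend to modules --- is indeed the one Jørgensen and the paper both follow. But as written your argument has three genuine problems.

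\emph{First, the map $\varphi$ you construct is identically zero.} If $\varepsilon\colon T\to\rho M$ is a chain map, the chain-map identity $\varepsilon^0\circ d_T^{-1}=d_{\rho M}^{-1}\circ\varepsilon^{-1}$ forces $\varepsilon^0\bigl(\B^0(T)\bigr)\subseteq\Imm d_{\rho M}^{-1}$, which is exactly the kernel of the canonical surjection $(\rho M)^0\twoheadrightarrow M$. So composing $\B^0(\varepsilon)$ with that surjection gives the zero map $\B^0(T)\to M$. The module one wants is $\Cok\bigl(T^{-1}\to T^0\bigr)$, not $\B^0(T)=\Imm\bigl(T^{-1}\to T^0\bigr)$ --- for a totally acyclic $T$ these differ by a shift, and only the cokernel receives a natural map to $\Cok(\rho M)=M$. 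The same off-by-one confusion reappears when you claim the non-positive truncation of $T'$ resolves $G'=\B^0(T')$; in fact it resolves $\Cok\bigl((T')^{-1}\to(T')^0\bigr)$.

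\emph{Second, even after fixing the indexing, the descent from $\K(\Proj\Lambda)$ to $\Modules\Lambda$ is glossed over in a way that does not work.} The adjunction hands you a factorization $T'\to T\xrightarrow{\varepsilon}\rho M$ only as a composition of \emph{homotopy classes}. Passing to cokernels is not well-defined on homotopy classes: if you replace a chain map by a homotopic one, the induced map on $\Cok(\cdot)$ changes by a morphism factoring through a projective. That is harmless in $\underline{\Modules}\Lambda$ but not in $\Modules\Lambda$, where you actually need the factorization to hold. This is precisely what the paper's Lemma~\ref{lem:preimageofcontfiniteiscontfinite} and Proposition~\ref{prop:contfiniteboundaries} fix: they lift the approximation to an honest chain map in $\C(\Proj R)$ by adjoining a projective cover summand, after which the cokernel construction is a genuine functor and the factorization is on the nose (it also makes the resulting approximation surjective, which quietly disposes of maps from projective modules). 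Your proposal never leaves the homotopy category and so never obtains an actual chain map to take cokernels of.

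\emph{Third, case~(ii) is not addressed.} You acknowledge that transporting compact generation through the two-ring dualizing complex of~\cite{MR1674648} is the hard part, but what you write there is a placeholder, not an argument. Jørgensen's proof in that setting has real content that the proposal does not reproduce. The paper itself only proves the one-ring version (Corollary~\ref{cor:GProjcontfinite}), so if you restricted your claim to case~(i) and to the noncommutative one-ring variant, the gaps would reduce to the first two points; as stated, case~(ii) is simply missing.
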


A pair of subcategories $(\X,\Y)$ of a triangulated catgory $\T$ is a \emph{torsion pair} if $\T(\X,\Y)=0$, and each object $T\in\T$ appears in a triangle
\[
X_T \to T \to Y_T \to  X_T[1]
\]
with $X_T \in \X$ and $Y_T \in \Y$. If in addition we have $\X[1]\subseteq \X$, the torsion pair $(\X,\Y)$ is called a \emph{$t$-structure}. In this case $\X$ is an \emph{aisle} and $\Y$ a \emph{coaisle} in $\T$. We remark that torsion pairs according to \cite[Definition I.2.1]{MR2327478} are in fact precisely t-structures. 
An aisle is always contravariantly finite since the above $X_T\to T$ is a right $\X$-approximation of $T$. Moreover, approximations coming from aisles are functorial, since assigning $T\mapsto X_T$ gives a right adjoint $\R$ to the inclusion $\X\hookrightarrow\T$ --- originally due to Keller--Vossieck \cite{MR907948}. As the dual claims hold for coaisles, the following picture always accompanies a t-structure.
\begin{center}
   \begin{tikzpicture}
      \matrix(a)[matrix of math nodes,
      row sep=2.5em, column sep=3em,
      text height=1.5ex, text depth=0.25ex]
      {  \X & \T & \Y \\};
      \path[right hook->,font=\scriptsize](a-1-1) edge (a-1-2);
      \path[left hook ->,font=\scriptsize](a-1-3) edge (a-1-2);
      \path[->,bend left, font=\scriptsize](a-1-2) edge node[below]{$\RR$} (a-1-1);
      \path[->,bend left, font=\scriptsize](a-1-2) edge node[above]{$\LL$} (a-1-3);
   \end{tikzpicture}
\end{center}
This `decomposition' of the ambient category $\T$ means that an aisle is a much stronger tool than a contravariantly finite subcategory. It is correspondingly more difficult to get a hold of and, indeed, the matter of generating aisles has become a central issue in modern algebra. Let us mention one remarkable result.

\begin{theorem} [{{Neeman \cite{MR2680406}}}]
Let $\T$ be an idempotent complete triangulated category, and $\SSS$ a thick contravariantly finite subcategory. Then $\SSS$ is an aisle in $\T$.
\end{theorem}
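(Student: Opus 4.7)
The plan is to verify the two defining conditions of a t-structure on $(\SSS, \SSS^{\perp})$: first, $\SSS[1] \subseteq \SSS$, which is immediate since a thick subcategory is closed under all shifts; and second, that every $T \in \T$ fits into a triangle $S_T \to T \to Y_T \to S_T[1]$ with $S_T \in \SSS$ and $Y_T \in \SSS^{\perp}$. Note that shift-closure of $\SSS$ collapses the paper's definition of $\SSS^{\perp}$ to the single condition $\T(S, Y) = 0$ for every $S \in \SSS$, and this is what must be produced.

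First I would apply contravariant finiteness to pick a right $\SSS$-approximation $\alpha_0 \colon S_0 \to T$ and complete it to a triangle $S_0 \to T \to Y_0 \to S_0[1]$. Applying $\T(S, -)$ for $S \in \SSS$, the surjection $\T(S, S_0) \to \T(S, T)$ forces $\T(S, Y_0)$ to inject into $\T(S, S_0[1])$, which controls but does not yet annihilate the obstruction to $Y_0$ lying in $\SSS^{\perp}$. So one must iterate.

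Setting $T_1 := Y_0$ and repeating, I would assemble a Postnikov-type tower $T = T_0 \to T_1 \to T_2 \to \cdots$ whose successive fibres lie in $\SSS$, together with cumulative approximations $F_n \to T$ with $F_n \in \SSS$ (built from the $S_i$ by repeated application of the octahedral axiom, using that $\SSS$ is closed under extensions). The construction is arranged so that for every $S \in \SSS$ the transition $\T(S, T_i) \to \T(S, T_{i+1})$ kills the image coming from the previous stage, i.e.\ the obstructions to $\SSS$-orthogonality are pushed forward and progressively exhausted.

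The main obstacle is passing to the ``limit'' of this tower, since $\T$ is not assumed to admit products or coproducts, so no homotopy (co)limit is directly available. This is precisely where idempotent completeness of $\T$ is essential: following Neeman, the tower can be chosen so that from some stage onwards the transition morphisms factor through split idempotents on identifiable summands in $\T$; splitting these idempotents, and using thickness of $\SSS$ to keep the resulting summands inside $\SSS$, produces a single object $S_T \in \SSS$ together with a morphism $S_T \to T$ realising the tower. Completing to a triangle $S_T \to T \to Y_T \to S_T[1]$ and tracing the vanishing built into each stage against an arbitrary $S \in \SSS$ then forces $\T(S, Y_T) = 0$, yielding $Y_T \in \SSS^{\perp}$ and finishing the proof.
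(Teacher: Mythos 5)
The paper does not prove this statement; it is quoted from Neeman \cite{MR2680406}, so there is no internal proof to compare against. Your set-up is sound as far as it goes: take a right $\SSS$-approximation $S_0 \to T$, complete to a triangle, iterate, and assemble cumulative approximations $F_n \to T$ with $F_n \in \SSS$ via the octahedral axiom. In fact you undersell what the construction gives: since $\SSS$ is closed under all shifts, the approximation property makes each transition $\T(S,T_i) \to \T(S,T_{i+1})$ the \emph{zero} map for every $S \in \SSS$ (the composite $\T(S,S_i)\twoheadrightarrow \T(S,T_i)\to\T(S,T_{i+1})$ vanishes and the first map is onto), not merely a map that ``kills the image coming from the previous stage.''

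The genuine gap is the passage to a limit. Nothing forces the tower $T_0 \to T_1 \to \cdots$ to terminate after finitely many steps, and since $\T$ carries no products or coproducts you cannot form a homotopy (co)limit as the paper does in Theorems~\ref{thm:boudfieldslemma} and~\ref{theorem.t_from_left_perp}. Your proposed remedy --- that ``the tower can be chosen so that from some stage onwards the transition morphisms factor through split idempotents on identifiable summands'' --- is not an argument: you give no reason why the connecting maps $T_n \to T_{n+1}$ or $F_n \to F_{n+1}$ should eventually split off anything, you do not identify which idempotents on which objects are to be split, and you do not explain why the resulting summand in $\SSS$ would still approximate $T$. Note that even in the paper's own dual construction (Theorem~\ref{theorem.t_from_left_perp}), where products \emph{are} available, the tower does not stabilise and one genuinely takes a homotopy limit; stabilisation cannot be expected here either. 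Idempotent completeness certainly plays a role in Neeman's proof, but merely invoking it does not supply the missing convergence mechanism, and as written the argument stops precisely at the point where the real content of the theorem lies.
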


A stronger notion still is that of a \emph{stable $t$-structure} on $\T$, i.e.\ a t-structure $(\X,\Y)$ in which $\X$ and $\Y$ are both closed under all shifts. In this case $\X$ and $\Y$ become thick subcategories of $\T$, and the above adjoints $\RR$ and $\LL$ induce triangle equivalences $\T/\Y \cong \X$ and $\T/\X \cong \Y$, respectively.

\section{Singularity categories and Gorenstein projectives} \label{section:singularitycategories}

In this section we are concerned with how change of rings affects `small' categories, that is categories derived in some way from categories of finitely generated modules. Therefore, throughout this section, we assume $f \colon \Lambda \to \Gamma$ to be a morphism of noetherian algebras such that $\pdim{}_{\Lambda}\Gamma$ and $\pdim \Gamma_\Lambda$ are finite.

The following theorem sums up the primary result of this section.

\begin{theorem} \label{thm finite main}
Under the standing assumptions above we have the solid part of the following commutative diagram
\[ \begin{tikzpicture}
    \matrix(a)[matrix of math nodes,row sep=2.5em, column sep=4em,text height=1.5ex, text depth=0.25ex]
      { \Gstable \Lambda & \Gstable \Gamma  \\
      \Dsg(\Lambda) & \Dsg(\Gamma) \\ };
      \draw[right hook->,shorten <=.3em] (a-1-1) to (a-2-1);
      \draw[right hook->,shorten <=.3em] (a-1-2) to (a-2-2);
      \draw[->,bend left=30] (a-1-1) to node [above] {\scriptsize $- \otimes_{\Lambda} \Gamma$} (a-1-2);
      \draw[->, dashed] (a-1-2) to node[above]{\scriptsize $\res$} (a-1-1);
      \draw[->,bend left=30] (a-2-1) to node [above] {\scriptsize $- \otimes^{\mathbb{L}}_{\Lambda} \Gamma$} (a-2-2);
      \draw[->] (a-2-2) to  node [above] {\scriptsize $\res$} (a-2-1);
      \draw[->, dashed, bend right=30] (a-2-1) to node [below] {\scriptsize $\RHom_{\Lambda}(\Gamma, -)$} (a-2-2);
   \end{tikzpicture} \]
where the two functors on the bottom level form an adjoint pair.

If moreover $\RHom_{\Lambda}(\Gamma, \Lambda) \in \perf \Gamma$, then $\res$ restricts to a functor between stable categories of finitely generated Gorenstein projective modules, and it has a right adjoint $\RHom_{\Lambda}(\Gamma, -)$ on the level of singularity categories, as indicated by the dashed arrows above.
\end{theorem}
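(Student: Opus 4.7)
The plan is to establish the three assertions of the theorem in turn, with each resting on its predecessors.

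First, I lift the module-level adjoint triple $-\otimes_{\Lambda}\Gamma \dashv \res \dashv \Hom_{\Lambda}(\Gamma,-)$ to derived functors $-\otimes^{\mathbb L}_{\Lambda}\Gamma \dashv \res \dashv \RHom_{\Lambda}(\Gamma,-)$ between $\D(\Modules\Lambda)$ and $\D(\Modules\Gamma)$. The standing hypotheses $\pdim{}_{\Lambda}\Gamma<\infty$ and $\pdim\Gamma_{\Lambda}<\infty$ make both $-\otimes^{\mathbb L}_{\Lambda}\Gamma$ and $\res$ restrict to $\Db(\modules-)$ and preserve $\perf$: the key points are $\res\Gamma={}_{\Lambda}\Gamma\in\perf\Lambda$ (so $\res(\perf\Gamma)\subseteq\perf\Lambda$ by thickness) and the existence of a finite flat resolution of $\Gamma_{\Lambda}$ for the derived tensor. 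Orlov's Lemma~\ref{lemadjointquotient} descends the pair $(-\otimes^{\mathbb L}_{\Lambda}\Gamma,\res)$ to the singularity categories, yielding the bottom row and its adjunction.

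For the top horizontal $-\otimes_{\Lambda}\Gamma\colon\Gstable\Lambda\to\Gstable\Gamma$, I first verify $\Tor^{\Lambda}_{i}(M,\Gamma)=0$ for all $i>0$ whenever $M\in\Gproj\Lambda$. A short exact sequence $0\to M\to Q\to M'\to 0$ with $Q$ projective and $M'\in\Gproj\Lambda$ gives via the Tor long exact sequence the isomorphism $\Tor^{\Lambda}_{i}(M,\Gamma)\cong\Tor^{\Lambda}_{i+1}(M',\Gamma)$ for $i\geq1$; iterating through cosyzygies yields $\Tor^{\Lambda}_{i}(M,\Gamma)\cong\Tor^{\Lambda}_{i+k}(M^{(k)},\Gamma)=0$ once $i+k>\pdim{}_{\Lambda}\Gamma$. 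Applying the same vanishing to every syzygy of $M$, one finds that for any complete projective resolution $P$ of $M$, the complex $P\otimes_{\Lambda}\Gamma$ is acyclic with $\B^{0}(P\otimes_{\Lambda}\Gamma)=M\otimes_{\Lambda}\Gamma$. Total acyclicity of $P\otimes_{\Lambda}\Gamma$ follows from the adjunction $\Hom_{\Gamma}(P\otimes_{\Lambda}\Gamma,\Gamma')\cong\Hom_{\Lambda}(P,\res\Gamma')$ for projective $\Gamma'$: since $\res\Gamma'$ has finite projective dimension over $\Lambda$, it admits a finite projective resolution $T$, each $\Hom_{\Lambda}(P,T^{j})$ is exact by total acyclicity of $P$, and the totalization computing $\Hom_{\Lambda}(P,\res\Gamma')$ is exact as well. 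Hence $M\otimes_{\Lambda}\Gamma\in\Gproj\Gamma$; the resulting stable functor commutes with the embedding into $\Dsg$ by the Tor-vanishing.

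Under the extra hypothesis $\RHom_{\Lambda}(\Gamma,\Lambda)\in\perf\Gamma$, I handle the dashed arrows. For the lower one, $\RHom_{\Lambda}(\Gamma,-)$ preserves $\perf$ by thickness (it sends $\Lambda$ into $\perf\Gamma$ by hypothesis), so a second application of Orlov's Lemma yields the right adjoint of $\res$ on singularity categories. For the upper dashed arrow I show that $\res$ carries the essential image of $\Gstable\Gamma$ into that of $\Gstable\Lambda$ inside $\Dsg$. Given $N\in\Gproj\Gamma$ with complete resolution $Q$, the candidate Gorenstein projective is $M=\B^{0}(\rho(\res Q))$: by Proposition~\ref{prop nice proj resol}, $\rho(\res Q)=\Omega^{d}(\res Q)[d]$, with $d=\pdim{}_{\Lambda}\Gamma$, is a complex of projective $\Lambda$-modules, still acyclic because $\res Q$ is. A comparison in $\Dsg(\Lambda)$, using that $\rho(\res Q)\to\res Q$ is a quasi-isomorphism and $\res Q$ represents $\res N$ modulo perfect complexes, identifies $[M]=[\res N]$; Gorenstein projectivity of $M$ then follows once $\rho(\res Q)$ is totally acyclic. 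The required vanishing $\Hom_{\K(\Proj\Lambda)}(\rho(\res Q),\Lambda[i])=0$ reduces, via Proposition~\ref{prop syzygy-cosyzygy} (whose hypothesis is granted by $\pdim\res Q^{m}\leq d$) followed by the module-level adjunction $\res\dashv\Hom_{\Lambda}(\Gamma,-)$, to a Hom-vanishing in $\K(\Modules\Gamma)$ into shifts of $\Hom_{\Lambda}(\Gamma,\mho^{d}\Lambda)$; the perfect hypothesis places this target within the thick closure of $\Gamma$, and total acyclicity of $Q$ over $\Gamma$ annihilates the Hom.

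The principal obstacle is precisely this last reduction: bridging the derived assertion $\RHom_{\Lambda}(\Gamma,\Lambda)\in\perf\Gamma$ with a strict, homotopy-category Hom-vanishing needed to exploit the total acyclicity of the unbounded complex $Q$. Proposition~\ref{prop syzygy-cosyzygy} is exactly the right bookkeeping, swapping the $d$-fold syzygy on one side of the adjunction for the $d$-fold cosyzygy of $\Lambda$ on the other, after which the perfect hypothesis becomes usable on the concrete Hom-complex.
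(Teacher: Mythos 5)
Your first two steps (the adjoint pair on $\Dsg$ via Orlov's lemma, and the fact that $-\otimes_\Lambda\Gamma$ sends $\Gproj\Lambda$ to $\Gproj\Gamma$ with Tor-vanishing) follow the paper's line of argument; the small differences (using an explicit cosyzygy iteration for Tor rather than the direct $\HH^i(P\otimes_\Lambda\Gamma)=\Tor_{j-i}(\B^{j+1}(P),\Gamma)$ formula, and using a finite $\Lambda$-projective resolution $T$ of $\res\Gamma'$ in a totalization rather than the thickness of $\Gamma\in\perf\Lambda$ directly) are cosmetic. A minor slip: for $\rho\res Q=\Omega^d(\res Q)[d]$ the relevant bound on $\pdim_\Lambda(\res Q^m)$ is $\pdim\Gamma_\Lambda$, not $\pdim{}_\Lambda\Gamma$; the terms of $\res Q$ are right $\Lambda$-modules.

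The third part is where there are genuine gaps. First, the assertion ``a comparison in $\Dsg(\Lambda)$ \ldots\ identifies $[M]=[\res N]$'' is not justified. Both $\rho\res Q$ and $\res Q$ are acyclic, so the observation that $\rho\res Q\to\res Q$ is a quasi-isomorphism carries no information; what you need is that the induced map $\B^0(\rho\res Q)\to\B^0(\res Q)$ has cone lying in $\perf\Lambda$. This is precisely the paper's Lemma~\ref{lem.rho stably same}, whose proof hinges on the non-obvious identity $\B^i(\Omega^d X[d])=\Omega^d\B^{i+d}(X)$ for acyclic $X$ and a concrete construction of the comparison map. You need to supply this (or cite it); it is not a formal consequence of what you wrote. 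Second, after reducing to $\Hom_{\K(\Modules\Gamma)}(Q,\Hom_\Lambda(\Gamma,\mho^d\Lambda)[j])$, the step ``the perfect hypothesis places this target within the thick closure of $\Gamma$, and total acyclicity of $Q$ annihilates the $\Hom$'' is too quick. The terms of $\Hom_\Lambda(\Gamma,\mho^d\Lambda)$ are $\Hom_\Lambda(\Gamma,-)$ applied to injective $\Lambda$-modules (and one cosyzygy), hence injective $\Gamma$-modules, and the complex is not literally in $\thick(\Gamma)\subset\K(\Modules\Gamma)$. What is true is that it is quasi-isomorphic to $\RHom_\Lambda(\Gamma,\Lambda)[d]$ in $\D(\Modules\Gamma)$ (this itself requires checking that $\Ext^{>0}_\Lambda(\Gamma,-)$ vanishes on the terms of $\mho^d\Lambda$, using $\pdim\Gamma_\Lambda<\infty$), which by hypothesis is perfect; but a quasi-isomorphism does not allow you to replace the target inside $\Hom_{\K}(Q,-)$ for an unbounded $Q$. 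You additionally need the fact that $\Hom_{\K}(Q,C)=0$ for $Q$ a complex of projectives and $C$ any bounded acyclic complex (which one proves by filtering $C$ by contractible pieces and applying something like Lemma~\ref{lem.short_ex}). With these two insertions your route closes, and it does give a genuinely different organization than the paper's, which works instead at the level of $\HH^i(\Hom_\Lambda(\rho\res P,\Lambda))$ and reduces to $\Ext_\Gamma$ against $\RHom_\Lambda(\Gamma,\Lambda)$ by picking $j$ sufficiently large — an approach that sidesteps the $\K$-versus-$\D$ subtlety because Ext is a derived invariant.
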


\subsection*{Singularity categories} Our strategy here is to observe first that the desired functors exist between bounded derived categories, and then transfer them to singularity categories.

\begin{lemma}
Under the assumptions at the beginning of this section, there is an adjoint triple
\[ \begin{tikzpicture}
      \matrix(a)[matrix of math nodes,
      row sep=2.5em, column sep=4em,
      text height=1.5ex, text depth=0.25ex]
      { \Db(\modules \Lambda) & \Db(\modules \Gamma).  \\};
      \path[->,font=\scriptsize](a-1-2) edge node[above]{$\res$} (a-1-1);
      \path[->,bend left, font=\scriptsize](a-1-1) edge node[above]{$- \otimes^{\mathbb{L}}_{\Lambda} \Gamma$} (a-1-2);
      \path[->,bend right, font=\scriptsize](a-1-1) edge node[below]{$\RHom_{\Lambda}(\Gamma, -)$} (a-1-2);
\end{tikzpicture} \]
\end{lemma}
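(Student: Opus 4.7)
The strategy has three steps: (1) note that the classical adjoint triple on module categories induces an adjoint triple on unbounded derived categories; (2) verify that each of the three derived functors sends $\Db(\modules)$ into $\Db(\modules)$; (3) restrict the adjunctions accordingly. Steps (1) and (3) are formal, while step (2) is where the two projective-dimension hypotheses are used.

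For step (1), recall that the restriction functor between module categories is exact and admits $-\otimes_\Lambda\Gamma$ as a left adjoint and $\Hom_\Lambda(\Gamma,-)$ as a right adjoint. Since the left adjoint sends projectives to projectives and the right adjoint sends injectives to injectives (the latter because $\res$ is exact), their total derived functors exist and remain adjoint to $\res$ on $\D(\Modules)$.

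For step (2), I would treat the three functors in turn. The functor $\res$ is exact, and since $\Gamma$ is finitely generated as a $\Lambda$-module on both sides it sends $\modules\Gamma$ into $\modules\Lambda$ and extends termwise. For $M\in\modules\Lambda$, the homology of $M\otimes_\Lambda^{\mathbb L}\Gamma$ is $\Tor_i^\Lambda(M,\Gamma)$, which vanishes for $i>\pdim{}_\Lambda\Gamma$, so boundedness is clear; moreover each $\Tor_i^\Lambda(M,\Gamma)$ is a subquotient of $P_i\otimes_\Lambda\Gamma$ for a finitely generated projective resolution $P_\bullet\to M$, and $P_i\otimes_\Lambda\Gamma$ is a summand of $\Gamma^{n_i}$, hence is finitely generated over $\Gamma$. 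For $N\in\modules\Lambda$, the cohomology of $\RHom_\Lambda(\Gamma,N)$ is $\Ext_\Lambda^i(\Gamma,N)$, which vanishes for $i>\pdim\Gamma_\Lambda$. Computing via a finitely generated projective resolution $Q_\bullet\to\Gamma$ of $\Gamma$ as a right $\Lambda$-module, each $\Ext_\Lambda^i(\Gamma,N)$ is a subquotient of $\Hom_\Lambda(Q_i,N)\cong N^{n_i}$, hence finitely generated as a $\Lambda$-module, and therefore as a $\Gamma$-module through $f$.

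The only real delicacy, which I expect to be the main obstacle, is the last claim: the right $\Gamma$-action on $\RHom_\Lambda(\Gamma,N)$ comes from left multiplication of $\Gamma$ on itself, and the resolution $Q_\bullet$ need not carry a compatible $\Gamma$-action. One obtains the $\Gamma$-action on cohomology by lifting, for each $\gamma\in\Gamma$, the $\Lambda$-linear endomorphism ``multiplication by $\gamma$'' of $\Gamma$ to a chain endomorphism of $Q_\bullet$, unique up to homotopy. With this in place, the finitely generated $\Lambda$-module $\Ext_\Lambda^i(\Gamma,N)$ inherits a compatible $\Gamma$-structure, and finite generation over $\Gamma$ reduces to finite generation over $\Lambda$. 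Once each functor is shown to preserve $\Db(\modules)$, the adjunctions from step (1) restrict, giving the desired triple.
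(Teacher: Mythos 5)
Your overall strategy --- derive the classical adjoint triple to get one between unbounded derived categories of big modules, then verify that each functor preserves $\Db(\modules)$ and restrict --- is exactly the paper's proof; the paper simply declares the restriction step ``straightforward to verify'' and does not write out the details you supply. The arguments for $\res$ and for $-\otimes^{\mathbb L}_\Lambda\Gamma$ are sound: in the latter case $P_\bullet\otimes_\Lambda\Gamma$ is a complex of finitely generated $\Gamma$-modules with $\Gamma$-linear differentials, so its homology is finitely generated because $\Gamma$ is noetherian.

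You correctly flag the one genuinely delicate point, namely that a projective resolution $Q_\bullet\to\Gamma$ in $\modules\Lambda$ does not carry the left $\Gamma$-action, so $\Hom_\Lambda(Q_\bullet,N)$ has no natural $\Gamma$-linear (or even $\Lambda$-linear) differentials. However, your proposed fix does not fully close the gap: you infer that $\Ext^i_\Lambda(\Gamma,N)$ is finitely generated over $\Lambda$ from its being a subquotient of $\Hom_\Lambda(Q_i,N)\cong N^{n_i}$, but that subquotient relation holds only in abelian groups, precisely because the differentials are not $\Lambda$-linear for the termwise $\Lambda$-structure; and the $\Lambda$-action you actually need on $\Ext^i$ --- the restriction along $f$ of the $\Gamma$-action obtained by lifting multiplication maps --- is not visibly compatible with that identification. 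The clean way to repair this (and what makes the paper's ``straightforward'' honest) is to track the central noetherian base ring $k$ over which both algebras are module finite: $\Hom_\Lambda(Q_\bullet,N)$ is a complex of finitely generated $k$-modules with $k$-linear differentials, so each $\Ext^i_\Lambda(\Gamma,N)$ is a finitely generated $k$-module, and a $\Gamma$-module that is finitely generated over the central subring $k$ is finitely generated over $\Gamma$. The same observation underwrites the finite generation of $\res M$ and of $\Gamma$ itself over $\Lambda$, which your argument tacitly uses.
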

\begin{proof}
Deriving the initial adjoint triple of functors between the big module categories gives an adjoint triple between $\D(\Modules \Lambda)$ and $\D(\Modules \Gamma)$. It is straightforward to verify that the latter restricts to bounded derived categories in our setup.
\end{proof}

This diagram immediately gives us the lower part of Theorem~\ref{thm finite main}:

\begin{proposition}
There is an adjoint pair of functors between singularity categories as in the following diagram.
\[ \begin{tikzpicture}
    \matrix(a)[matrix of math nodes,row sep=2.5em, column sep=4em,text height=1.5ex, text depth=0.25ex]
      { \Dsg(\Lambda) & \Dsg(\Gamma) \\ };
       \draw[->,bend left=30] (a-1-1) to node [above] {\scriptsize $- \otimes^{\mathbb{L}}_{\Lambda} \Gamma$} (a-1-2);
      \draw[->] (a-1-2) to  node [above] {\scriptsize $\res$} (a-1-1);
      \draw[->, dashed, bend right=30] (a-1-1) to node [below] {\scriptsize $\RHom_{\Lambda}(\Gamma, -)$} (a-1-2);
   \end{tikzpicture} \]
If moreover $\RHom_{\Lambda}(\Gamma, \Lambda) \in \perf \Gamma$, then there is an adjoint triple as indicated by the dashed arrow.
\end{proposition}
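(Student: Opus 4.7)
The plan is to apply Orlov's quotient lemma (Lemma~\ref{lemadjointquotient}) twice to the adjoint triple
\[
\bigl(- \otimes^{\mathbb{L}}_{\Lambda} \Gamma,\ \res,\ \RHom_{\Lambda}(\Gamma, -)\bigr)
\]
on the bounded derived categories, taking $\perf\Lambda$ and $\perf\Gamma$ as the triangulated subcategories to be quotiented out. Since $\Dsg(\Lambda)=\Db(\modules\Lambda)/\perf\Lambda$ and similarly for $\Gamma$, the main task is simply to check that each of the three functors preserves perfect complexes under the relevant hypotheses.

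For the first adjoint pair $(-\otimes^{\mathbb{L}}_{\Lambda}\Gamma,\res)$, I would observe that $\perf\Lambda=\thick(\Lambda)$ and $\perf\Gamma=\thick(\Gamma)$, so it suffices to evaluate the functors on the regular representations. Clearly $\Lambda \otimes^{\mathbb{L}}_\Lambda \Gamma \cong \Gamma \in \perf\Gamma$, and since $\pdim \Gamma_{\Lambda}<\infty$ by our standing hypothesis, we have $\res \Gamma \in \perf\Lambda$. Both exact functors therefore send the generator into the thick subcategory on the other side, hence take perfects to perfects. Orlov's lemma then yields the adjoint pair $(-\otimes^{\mathbb{L}}_{\Lambda}\Gamma,\res)$ on $\Dsg$.

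For the second adjoint pair $(\res,\RHom_{\Lambda}(\Gamma,-))$, the only new ingredient needed is that $\RHom_{\Lambda}(\Gamma,-)$ sends $\perf\Lambda$ to $\perf\Gamma$. By the same thick-closure argument, this reduces precisely to the assumption $\RHom_{\Lambda}(\Gamma,\Lambda)\in\perf\Gamma$ of the proposition. Applying Orlov's lemma again produces the desired right adjoint $\RHom_{\Lambda}(\Gamma,-)$ of $\res$ on the singularity categories, completing the adjoint triple indicated by the dashed arrow.

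I do not expect any genuine obstacle: the whole proof is a formal descent along Verdier quotients once the preservation of perfect complexes is noted. The only subtlety worth emphasising is that under the two separate hypotheses, two \emph{different} pairs of subcategories must be matched up by the corresponding functor, but both reductions are immediate from the description of $\perf$ as a thick closure.
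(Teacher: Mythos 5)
Your proof is correct and follows essentially the same route as the paper: apply Orlov's quotient lemma after checking that each of the three functors preserves perfect complexes. Your explicit reduction to the generators via $\perf=\thick(\text{regular module})$ is a small elaboration of what the paper leaves implicit, but it is the same argument.
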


\begin{proof}
By Lemma~\ref{lemadjointquotient} it suffices to check that the respective functors between bounded derived categories preserve perfect complexes. For $- \otimes^{\mathbb{L}}_{\Lambda} \Gamma$ this is automatic, and for $\res$ it follows since $\Gamma$ is quasi-isomorphic to a perfect complex of $\Lambda$-modules by assumption. Finally, $\RHom_{\Lambda}(\Gamma, -)$ preserves perfect complexes if and only if $\RHom_{\Lambda}(\Gamma, \Lambda) \in \perf \Gamma$.
\end{proof}

\subsection*{Stable categories of Gorenstein projective modules} Recall that in the current section, the coacyclicity of a complex of projective modules is detected by the ring itself --- see Remark~\ref{remark:fingencoacyclic}.

\begin{proposition}
If $M\in\Gproj \Lambda$, then $M \otimes_{\Lambda}\Gamma$ is a Gorenstein projective $\Gamma$-module and $\Tor^{\Lambda}_n(M, \Gamma) = 0$ for $n > 0$.
\end{proposition}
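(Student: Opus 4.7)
The plan is to start with a complete projective resolution $P^\bullet$ of $M$ over $\Lambda$, that is a totally acyclic complex of finitely generated projective $\Lambda$-modules with $M$ appearing as a cycle. Apply $-\otimes_\Lambda\Gamma$ and show that the resulting complex is still totally acyclic over $\Gamma$, with $M\otimes_\Lambda\Gamma$ in the appropriate position as a cycle.

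First I would dispose of the Tor vanishing. Let $d=\pd{}_\Lambda\Gamma<\infty$; by hypothesis this is finite. Since every syzygy and cosyzygy of $M$ in $P^\bullet$ is again Gorenstein projective, a standard dimension shift along the short exact sequences
\[ 0\to \Omega^{-k}M \to P^k \to \Omega^{-k-1}M \to 0 \]
yields $\Tor^\Lambda_n(M,\Gamma)\cong \Tor^\Lambda_{n+k}(\Omega^{-k}M,\Gamma)$ for all $k\geq 0$ and $n\geq 1$, since the intermediate $\Tor$-terms involving $P^k$ vanish. Choosing $k$ with $n+k>d$ forces the right hand side to be zero, proving $\Tor^\Lambda_n(M,\Gamma)=0$ for all $n>0$.

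Next I would argue that $P^\bullet\otimes_\Lambda\Gamma$ is acyclic. The same Tor vanishing applies to every cycle of $P^\bullet$ (each of which is Gorenstein projective), so tensoring each short exact sequence of syzygies with $\Gamma$ preserves exactness. Splicing these short exact sequences gives the desired acyclicity, and simultaneously shows that $M\otimes_\Lambda\Gamma$ appears as a cycle of $P^\bullet\otimes_\Lambda\Gamma$. Each term $P^i\otimes_\Lambda\Gamma$ is of course a finitely generated projective $\Gamma$-module.

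The main step is to verify that $P^\bullet\otimes_\Lambda\Gamma$ is totally acyclic over $\Gamma$, i.e.\ that $\Hom_\Gamma(P^\bullet\otimes_\Lambda\Gamma, Q)$ is exact for every $Q\in\proj\Gamma$. By the tensor-hom adjunction this complex is isomorphic to $\Hom_\Lambda(P^\bullet,\res Q)$. Now $Q$ is a summand of some $\Gamma^n$, so $\res Q$ is a summand of $\res\Gamma^n$; since $\pd \Gamma_\Lambda<\infty$ by hypothesis, $\res Q$ has finite projective dimension as a right $\Lambda$-module. Finally, $\Hom_\Lambda(P^\bullet,L)$ is exact whenever $L$ has finite projective dimension: using a finite projective resolution of $L$ one dimension-shifts, invoking the exactness of $\Hom_\Lambda(P^\bullet,P)$ for each genuinely projective $P$ (which is part of total acyclicity). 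This shows $P^\bullet\otimes_\Lambda\Gamma$ is totally acyclic, so $M\otimes_\Lambda\Gamma$ is Gorenstein projective over $\Gamma$. The only conceptually delicate point is the last dimension-shift: one must handle a bounded resolution of $\res Q$ and verify that exactness is preserved under the relevant extensions, but this is routine once the finiteness of $\pd \Gamma_\Lambda$ is in hand.
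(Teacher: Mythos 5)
Your proposal is correct and follows essentially the same route as the paper's proof: use the totally acyclic complex $P$, show $P\otimes_\Lambda\Gamma$ is acyclic via a dimension-shift/Tor argument that exploits $\pdim{}_\Lambda\Gamma<\infty$, deduce the Tor vanishing and $M\otimes_\Lambda\Gamma=\B^0(P\otimes_\Lambda\Gamma)$, and then check coacyclicity by transferring $\Hom_\Gamma(P\otimes_\Lambda\Gamma,-)$ to $\Hom_\Lambda(P,-)$ and dimension-shifting along a finite projective $\Lambda$-resolution of the (restricted) test object. The only cosmetic difference is that you test coacyclicity against all $Q\in\proj\Gamma$, while the paper uses the observation (its Remark on finitely generated coacyclics) that testing against $\Gamma$ alone suffices; the underlying dimension-shift is identical.
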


\begin{proof}
By definition, there is a totally acyclic complex $P$ over $\proj \Lambda$ such that $M = \B^0(P)$. As $P$ is exact we see that $\HH^i(P \otimes_{\Lambda} \Gamma) = \Tor_{j-i}(\B^{j+1}(P), \Gamma)$ for any $j > i$. Since the projective dimension of $\Gamma$ as left $\Lambda$-module is assumed to be finite, we can always choose $j$ sufficiently big so that this $\Tor$ vanishes. Thus $P \otimes_{\Lambda} \Gamma$ is exact.

It follows that $\Tor^{\Lambda}_n(M, \Gamma) = 0$, and that $M \otimes_{\Lambda} \Gamma = \B^0( P \otimes_{\Lambda} \Gamma)$. Thus the proof is complete provided we manage to show that $P\otimes_\Lambda \Gamma$ is totally acyclic. Acyclicity is already established. For coacyclicity note that
\[ \Hom_{\Lambda}( P \otimes_{\Lambda} \Gamma, \Gamma) = \Hom_{\Lambda}(P, \Gamma), \]
and that the latter is exact because $\Gamma \in \perf \Lambda$ as right $\Lambda$-modules, and  $\Hom_{\Lambda}(P, \Lambda)$ is exact by assumption.
\end{proof}

\begin{proposition} \label{lem:resgivesfunctorbetweengorenstein}
Assume that $\RHom_{\Lambda}(\Gamma, \Lambda) \in \perf \Gamma$. Then the restriction functor between the singularity categories restricts to a functor between the stable categories of finitely generated Gorenstein projective modules.
\end{proposition}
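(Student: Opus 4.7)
My plan is to exhibit, for each $N \in \Gproj \Gamma$, a totally acyclic complex of finitely generated projective $\Lambda$-modules whose $\B^0$ represents $\res N$ in $\Dsg(\Lambda)$; this will show $\res N$ lies in the essential image of $\Gstable \Lambda \hookrightarrow \Dsg(\Lambda)$. Start with a complete projective resolution $Q \in \Ktac(\proj \Gamma)$ of $N$. Setting $d = \pdim \Gamma_\Lambda$, the uniform bound on projective dimension means each $\res Q^i$ is a finitely generated $\Lambda$-module of projective dimension at most $d$, and Proposition~\ref{prop nice proj resol} yields a complex $\rho(\res Q) = \Omega^d(\res Q)[d]$ in $\K(\proj \Lambda)$ which is quasi-isomorphic to $\res Q$ in $\D(\Modules \Lambda)$, hence acyclic, and has finitely generated projective terms.

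The real content is to verify that $\rho(\res Q)$ is coacyclic. By Remark~\ref{remark:fingencoacyclic}, it suffices to show $\Hom_{\K(\Modules \Lambda)}(\rho(\res Q), \Lambda[n]) = 0$ for every $n$. Since the hypothesis of Proposition~\ref{prop syzygy-cosyzygy} holds termwise (because $\pdim \res Q^m \leq d$), that proposition combined with the termwise adjunction $\Hom_\Lambda(\res -, -) = \Hom_\Gamma(-, \Hom_\Lambda(\Gamma, -))$ rewrites these groups as
\[
\Hom_{\K(\Modules \Gamma)}\bigl(Q, \Hom_\Lambda(\Gamma, \mho^d \Lambda)[n-d]\bigr).
\]
The plan is to identify the complex $\Hom_\Lambda(\Gamma, \mho^d \Lambda)$, up to objects on which $\Hom_{\K}(Q, -)$ vanishes, with a bounded complex $\tilde\omega$ of finitely generated projective $\Gamma$-modules representing $\omega := \RHom_\Lambda(\Gamma, \Lambda) \in \perf \Gamma$. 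Once this is in place, coacyclicity of $Q$ in $\K(\Proj \Gamma)$ forces $\Hom_{\K}(Q, \tilde\omega[m]) = 0$ for all $m$, since $\tilde\omega$ lies in $\thick(\Proj \Gamma) = \perf \Gamma$, and the result follows. With coacyclicity secured, $\rho(\res Q)$ lies in $\Ktac(\proj \Lambda)$, so $\B^0(\rho(\res Q)) \in \Gproj \Lambda$ is a module isomorphic to $\res N$ in $\Dsg(\Lambda)$.

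The main obstacle will be this $\K$-level identification of $\Hom_\Lambda(\Gamma, \mho^d \Lambda)$ with $\tilde\omega[d]$: although both represent $\omega[d]$ in $\D(\Modules \Gamma)$, they need not agree in $\K(\Modules \Gamma)$. My approach is to iterate the short exact sequences $0 \to \mho^i \Lambda \to I_{\mho^i \Lambda} \to \mho^{i+1} \Lambda \to 0$ ($0 \leq i < d$) with contractible middle terms of injectives, apply $\Hom_\Lambda(\Gamma, -)$—which sends contractible injective complexes to contractible complexes of injective $\Gamma$-modules—and control the resulting $\Ext^1_\Lambda(\Gamma, \mho^i \Lambda) = \Ext^{i+1}_\Lambda(\Gamma, \Lambda)$ correction terms; these together assemble into a triangle in $\K(\Modules \Gamma)$ linking $\Hom_\Lambda(\Gamma, \mho^d \Lambda)$ to a chain-level representative of $\omega[d]$, with cone in the Hom-orthogonal of $Q$ because its building blocks are either contractible or perfect. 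This step, threading the perfectness hypothesis $\omega \in \perf \Gamma$ through the iterative $\mho$-construction, is where the proof will require the most care.
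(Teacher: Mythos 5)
Your route is genuinely different from the paper's: you use the syzygy--cosyzygy duality (Proposition~\ref{prop syzygy-cosyzygy}) together with the complex-level adjunction to convert $\Hom_{\K}(\rho\res Q, \Lambda[\ast])$ into $\Hom_{\K(\Modules \Gamma)}(Q, \Hom_\Lambda(\Gamma, \mho^d\Lambda)[\ast])$, whereas the paper proves Lemma~\ref{lem.rho stably same} (boundaries of $\rho X$ and $X$ differ by a finite complex of projectives) and then dimension-shifts the $\Ext$-groups computing the homology of $\Hom_\Lambda(\rho\res Q,\Lambda)$. Your transformations up to the displayed formula are correct.

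The difficulty you flag, however, is not closed by the plan you sketch, and it is worth naming why. Applying $\Hom_\Lambda(\Gamma,-)$ to the short exact sequences $0 \to \mho^i\Lambda \to I_{\mho^i\Lambda} \to \mho^{i+1}\Lambda \to 0$ does \emph{not} yield short exact sequences: surjectivity fails precisely on the non-injective term of $\mho^i\Lambda$ (the cosyzygy $\mho^i_\Lambda$), because $\Ext^1_\Lambda(\Gamma,\mho^i_\Lambda)=\Ext^{i+1}_\Lambda(\Gamma,\Lambda)$ need not vanish for $i<d$. So the iteration produces neither triangles in $\K(\Modules \Gamma)$ nor "building blocks" that are contractible or perfect; the $\Ext$-correction modules that appear are, in general, neither. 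What actually makes the vanishing true (after dévissage past the injective terms, which leaves $\Hom_\Lambda(\Gamma,\mho^d_\Lambda)$) is a Tate-cohomology dimension shift: $\Hom_{\K}(Q,Z[m])\cong\Ext^{m+j}_\Gamma(\B^{j}(Q),Z)$ for $j\gg 0$, and these ordinary $\Ext$-groups vanish in high degree because $\Hom_\Lambda(\Gamma,\mho^d_\Lambda)=\RHom_\Lambda(\Gamma,\mho^d_\Lambda)$ sits, in $\D(\Modules \Gamma)$, in a triangle between $\RHom_\Lambda(\Gamma,\Lambda)[d]\in\perf\Gamma$ and a bounded complex of injectives --- neither of which has high-degree $\Ext$ against a finitely generated Gorenstein projective $\Gamma$-module. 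That dimension-shift input is exactly what Lemma~\ref{lem.rho stably same} packages in the paper's proof. Finally, note a second unaddressed point: your last sentence asserts that $\B^0(\rho\res Q)$ and $\res N$ agree in $\Dsg(\Lambda)$, but the counit $\rho\res Q\to\res Q$ is a quasi-isomorphism between \emph{acyclic} complexes and so carries no $\Dsg$-information by itself; the comparison of boundaries is precisely the other half of Lemma~\ref{lem.rho stably same} (an iterated-Schanuel argument), and some substitute for it is needed.
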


For the proof, we prepare the following technical observation.

\begin{lemma} \label{lem.rho stably same}
Let $X$ be an acyclic complex of finitely generated $\Lambda$-modules, and suppose the projective dimensions of the terms of $X$ are bounded by $d$. Then, for each $i$, there is a morphism $\phi \colon \B^i(\rho X)\to \B^i(X)$ such that the cone of $\phi$ is quasi-isomorphic to a complex of finitely generated projectives concentrated in degrees $-d$ to $0$.
\end{lemma}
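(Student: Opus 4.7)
The plan is to invoke Proposition~\ref{prop nice proj resol}, which identifies $\rho X$ with $\Omega^d(X)[d]$, and then construct $\phi$ as the composite of $d$ elementary steps coming from the canonical tower of short exact sequences of complexes
\[ 0 \to \Omega^{j}X \to P_{\Omega^{j-1}X} \to \Omega^{j-1}X \to 0, \qquad j = 1, \ldots, d, \]
whose middle terms are contractible complexes of finitely generated projectives (one can keep the terms finitely generated thanks to the noetherianness of $\Lambda$).

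First, I would exhibit an explicit chain map representative $\psi_j \colon \Omega^j X[j] \to \Omega^{j-1}X[j-1]$ of the $\K$-morphism supplied by Lemma~\ref{lem.short_ex}(i). A natural choice is the map that in degree $n$ sends the element $(p,q) \in \Omega^j X^{n+j} \subseteq P_{\Omega^{j-1}X^{n+j}} \oplus P_{\Omega^{j-1}X^{n+j-1}}$ to $\pi(q) \in \Omega^{j-1}X^{n+j-1}$. A direct computation shows this is a chain map and, upon passage to boundaries, is identified with the restriction of the projective cover $\pi \colon P_{\Omega^{j-1}X^{i+j-1}} \twoheadrightarrow \Omega^{j-1}X^{i+j-1}$ along the preimage of $\B^{i+j-1}(\Omega^{j-1}X)$. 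In particular $\B^i(\psi_j)$ is a surjective module homomorphism whose kernel is $\ker(\pi)$.

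Setting $\phi = \B^i(\psi_1) \circ \B^i(\psi_2) \circ \cdots \circ \B^i(\psi_d)$, an easy induction shows that each term of $\Omega^{j-1}X$ has projective dimension at most $d - j + 1$, so $\ker(\B^i(\psi_j))$ has projective dimension at most $d - j$. Hence $\Cone(\B^i(\psi_j))$ is quasi-isomorphic in $\D(\modules \Lambda)$ to a complex of finitely generated projectives concentrated in degrees $-(d-j+1)$ to $-1$, and in particular within $[-d, 0]$.

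Finally, by iterated application of the octahedral axiom, $\Cone(\phi)$ is an iterated extension of the cones $\Cone(\B^i(\psi_j))$ for $j = 1, \ldots, d$. Since the mapping cone of a morphism between complexes of finitely generated projectives concentrated in degrees $[-d, 0]$ remains of this form, it follows that $\Cone(\phi)$ is quasi-isomorphic to a complex of finitely generated projectives concentrated in degrees $-d$ to $0$, as desired. The main subtlety lies in the index bookkeeping needed to identify $\B^i(\psi_j)$ explicitly with the advertised projective cover restriction; thereafter, the bound on the cone is a formal octahedron argument.
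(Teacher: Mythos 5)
Your proof is correct, and it takes a genuinely different route from the one in the paper. The paper constructs a single commutative ladder comparing a projective resolution of $\B^{i+d}(X)$ with the stretch of $X$ from degree $i$ to degree $i+d-1$, lifts the identity on $\B^{i+d}(X)$ across the ladder to produce $\phi$, and then computes $\Cone(\phi)$ in one go by totalizing the ladder and peeling off the exact tail to land on a two-term complex $P_{d-1} \to K$ with $\pdim K \leq d$. You instead factor $\phi$ as a composite of $d$ elementary single-syzygy steps $\B^i(\psi_j)$, show each is a surjection of modules with kernel $\Omega\big((\Omega^{j-1}X)^{i+j-1}\big)$ of controlled projective dimension, so that each $\Cone(\B^i(\psi_j))$ is, up to quasi-isomorphism, a finite complex of finitely generated projectives in degrees $[-(d-j+1),-1]$, and then reassemble $\Cone(\phi)$ by iterated octahedra. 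Both arguments work; the paper's is more direct, while yours is more modular with each step uniform, and it in fact yields the slightly sharper degree bound $[-d,-1]$. (Your explicit formula for $\psi_j$ is a genuine chain map once the $(-1)^j$ and $(-1)^{j-1}$ signs from the shifts are accounted for, so you do not actually need to invoke Lemma~\ref{lem.short_ex}, which is fortunate since its $\Ext^1$-hypothesis is not automatic here.)

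One small imprecision deserves attention. The closing justification ``the mapping cone of a morphism between complexes of finitely generated projectives concentrated in degrees $[-d,0]$ remains of this form'' is false as stated: if $A$ and $B$ are complexes of projectives in degrees $[-d,0]$, then $\Cone(A\to B)=A[1]\oplus B$ sits in degrees $[-d-1,0]$, and e.g.\ the zero map can produce nontrivial homology in degree $-d-1$. What you actually need, and what is true, is that the class of objects of $\Db(\modules\Lambda)$ quasi-isomorphic to a complex of finitely generated projectives in degrees $[-d,0]$ is closed under \emph{extensions}: in a triangle $A\to B\to C\to A[1]$ with $A,C$ in the class, one has $B\cong\Cone(C[-1]\to A)$, and the $[-1]$ shift exactly cancels the $[1]$ the cone introduces, so $B$ is again represented in degrees $[-d,0]$. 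Since the octahedral axiom applied to $\phi=\B^i(\psi_1)\circ\cdots\circ\B^i(\psi_d)$ delivers precisely a tower of such extensions, the conclusion does follow; only the stated lemma needs to be replaced by its correct extension-closure form. (A minor nitpick: the paper only assumes a chosen projective epimorphism $P_{X^m}\twoheadrightarrow X^m$, not a projective cover, but nothing in your argument depends on minimality.)
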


\begin{proof}
By Proposition~\ref{prop nice proj resol} we have $\rho X = \Omega^d X[d]$. It follows directly from the construction of syzygies for complexes that syzygies commute with taking boundaries of exact complexes, thus
\[ \B^i(\rho X) = \B^i(\Omega^d X[d]) = \Omega^d \B^{i+d}(X). \]
We hence obtain a commutative diagram
\[
\begin{tikzpicture}[baseline=(current bounding box.center)]
   \matrix(a)[matrix of math nodes,
   row sep=3em, column sep=2em,
   text height=1.5ex, text depth=0.25ex]
   {\B^i(\rho X) & P_{d-1} & \cdots & P_0 & \B^{i+d}(X) \\
   \B^i(X) & X^i & \cdots & X^{i+d-1} & \B^{i+d}(X), \\};
   \path[->,font=\scriptsize](a-1-1) edge (a-1-2)
   (a-1-2) edge (a-1-3)
   (a-1-3) edge (a-1-4)
   (a-1-4) edge (a-1-5)

   (a-2-1) edge (a-2-2)
   (a-2-2) edge (a-2-3)
   (a-2-3) edge (a-2-4)
   (a-2-4) edge (a-2-5)

   (a-1-1) edge node [left] {$\phi$} (a-2-1)
   (a-1-2) edge (a-2-2)
   (a-1-4) edge (a-2-4);
   \draw[double distance=.5mm](a-1-5) to (a-2-5);
\end{tikzpicture}
\]
from which it follows that there is a quasi-isomorphism
\[ \Cone(\phi) \to \left[P_{d-1} \to P_{d-2} \oplus X^i \to \cdots \to \B^{i+d}(X) \oplus X^{i+d-1} \to \B^{i+d}(X) \right]. \]
Note that the rightmost map here is a split epimorphism. After splitting off the term $ \B^{i+d}(X)$, the complex
\[ P_{d-1} \to P_{d-2} \oplus X^i \to \cdots \to P_0 \oplus X^{i+d-2} \to X^{i+d-1}  \]
extends from degrees $-1$ to $d-1$. Moreover, this complex admits non-zero homology only in degrees $-1$ and $0$. In particular it is quasi-isomorphic to the complex
\[ P_{d-1} \to K, \]
where $K$ is the kernel of the map $P_{d-2} \oplus X^i \to P_{d-3} \oplus X^{i+1}$. Since $K$ appears as the kernel of the exact sequence
\[ 0 \to K \to P_{d-2} \oplus X^i \to \cdots \to P_0 \oplus X^{i+d-2} \to X^{i+d-1} \to 0 \]
in which all other terms have projective dimension at most $d$, it follows that also the projective dimension of $K$ is bounded above by $d$. Thus, replacing $K$ by a minimal projective resolution we obtain the claim.
\end{proof}

\begin{proof}[Proof of Proposition~\ref{lem:resgivesfunctorbetweengorenstein}]
Let $M\in\Gproj\Gamma$. What we need to show is that $\res M$ is, up to isomorphism in $\Dsg(\Lambda)$, a Gorenstein projective $\Lambda$-module.

Let $P$ be a complete resolution of $M$, that is a totally acyclic complex over $\proj \Gamma$ such that $M = \B^0(P)$. Since finitely generated projective $\Gamma$-modules have bounded projective dimension as $\Lambda$-modules we are in the situation of Lemma~\ref{lem.rho stably same} above. In particular
\[ \res M = \B^0(\res P) \cong \B^0(\rho \res P), \]
since $\B^0(\res P)$ and $\B^0(\rho \res P)$ only differ by a finite complex of projectives.

It only remains to show that $\B^0(\rho \res P)$ is Gorenstein projective over $\Lambda$. For this it clearly suffices to check that $\rho \res P$ is totally acyclic. The fact that $\rho \res P$ is acyclic follows directly from the construction. Thus it only remains for us to show that $\Hom_{\Lambda}( \rho \res P, \Lambda)$ is also acyclic. One may observe that the homologies of this complex are, for sufficently large $j$, given by
\begin{align*}
  \HH^i(\Hom_{\Lambda}( \rho \res P, \Lambda)) & = \Ext^{i + j}_{\Lambda}\left( \B^{j+1}(\rho \res P), \Lambda\right) \\
& = \Ext^{i+j}_{\Lambda}\left(\B^{j+1}(\res P), \Lambda\right) & \text{Lemma~\ref{lem.rho stably same} for $i+j>d$} \\
& = \Ext^{i+j}_{\Lambda}\left(\res \B^{j+1}(P), \Lambda\right) \\
& = \Ext^{i+j}_{\Gamma}\left(\B^{j+1}(P), \RHom_\Lambda(\Gamma,\Lambda)\right) 
\end{align*}
which vanishes provided $\RHom_{\Lambda}(\Gamma, \Lambda)$ is perfect, since $P$ is totally acyclic.
\end{proof}

This furnishes the proof of Theorem~\ref{thm finite main}.

\subsection*{Applications}

For the remainder of the current section, we assume that $\Lambda$ and $\Gamma$ are module finite over some regular commutative noetherian ring $k$, and that $f$ is $k$-linear. Our next result can be interpreted as saying that $\Lambda$ is `less singular' than $\Gamma$ under the following condition on the cone of $f$.

\begin{proposition} \label{prop:coneperfectimpliesfullyfaithful}
Suppose $\Cone(f)\in\perf\Lambdae$. Then, in the notation of Theorem~\ref{thm finite main},
\begin{enumerate}[label=\emph{(\roman*)}]
\item the functor $-\otimes_{\Lambda}^{\mathbb L}\Gamma\colon\Dsg(\Lambda)\to\Dsg(\Gamma)$ is fully faithful;

\item the functor $-\otimes_{\Lambda}\Gamma\colon\Gstable \Lambda \to \Gstable \Gamma $ is fully faithful.
\end{enumerate}
\end{proposition}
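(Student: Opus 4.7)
The plan for (i) is to exploit that $(-\otimes_\Lambda^{\mathbb L}\Gamma, \res)$ is an adjoint pair on singularity categories by Theorem~\ref{thm finite main}, so that fully faithfulness of $-\otimes_\Lambda^{\mathbb L}\Gamma$ is equivalent to every unit $\eta_M\colon M\to\res(M\otimes_\Lambda^{\mathbb L}\Gamma)$ being an isomorphism in $\Dsg(\Lambda)$. I would lift $\eta_M$ back to $\Db(\modules\Lambda)$ by applying $M\otimes_\Lambda^{\mathbb L}-$ to the bimodule triangle $\Lambda\xrightarrow{f}\Gamma\to\Cone(f)\to\Lambda[1]$ in $\D(\Lambdae)$, producing a distinguished triangle
\[ M \to M\otimes_\Lambda^{\mathbb L}\Gamma \to M\otimes_\Lambda^{\mathbb L}\Cone(f) \to M[1]. \]
Invertibility of $\eta_M$ in $\Dsg(\Lambda)$ thus reduces to showing that $M\otimes_\Lambda^{\mathbb L}\Cone(f)$ lies in $\perf\Lambda$ for every $M\in\Db(\modules\Lambda)$.

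The heart of the argument is then a thick-subcategory d\'evissage combined with regularity of $k$. Since $\perf\Lambdae = \thick_{\D(\Lambdae)}(\Lambdae)$ and $\Cone(f)\in\perf\Lambdae$ by hypothesis, it is enough to check the perfection claim after replacing $\Cone(f)$ by $\Lambdae$. After unwinding the bimodule structure one has $M\otimes_\Lambda^{\mathbb L}\Lambdae \cong M\otimes_k^{\mathbb L}\Lambda$ as right $\Lambda$-modules. Because $\Lambda$ is module finite over the noetherian ring $k$, the underlying $k$-complex of $M$ lies in $\Db(\modules k)$, which collapses to $\perf k$ by regularity of $k$. Hence $M\otimes_k^{\mathbb L}\Lambda \in \thick_{\D(\Modules\Lambda)}(k\otimes_k\Lambda) = \perf\Lambda$, as required.

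Part (ii) will then follow essentially for free from the commutative square of Theorem~\ref{thm finite main}: both vertical arrows $\Gstable\Lambda\hookrightarrow\Dsg(\Lambda)$ and $\Gstable\Gamma\hookrightarrow\Dsg(\Gamma)$ are fully faithful by Buchweitz's embedding (Theorem~\ref{thm:embeddingofgproj}), and the bottom arrow is fully faithful by (i); hence the top arrow is fully faithful as well. The commutativity needed for this syllogism relies on the vanishing of $\Tor^\Lambda_{>0}(-,\Gamma)$ on $\Gproj\Lambda$ established in the preceding proposition, which forces the underived $-\otimes_\Lambda\Gamma$ to agree with $-\otimes_\Lambda^{\mathbb L}\Gamma$ on this subcategory. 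The main technical hurdle in the plan is the bimodule bookkeeping leading to the identification $M\otimes_\Lambda^{\mathbb L}\Lambdae \cong M\otimes_k^{\mathbb L}\Lambda$ as right $\Lambda$-modules; once this is in place, the rest is a brief d\'evissage plus the collapse $\Db(\modules k)=\perf k$ enabled by regularity of $k$.
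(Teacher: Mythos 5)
Your strategy mirrors the paper's: show that the adjunction unit $\eta_M$ becomes an isomorphism in $\Dsg(\Lambda)$ by tensoring $M$ with the bimodule triangle $\Lambda\to\Gamma\to\Cone(f)\to\Lambda[1]$, reduce to the claim $M\otimes_\Lambda^{\mathbb L}\Cone(f)\in\perf\Lambda$, and derive (ii) from the full embeddings of Theorem~\ref{thm:embeddingofgproj}. You additionally spell out a thick-subcategory d\'evissage reducing that claim to the case $\Cone(f)=\Lambdae$, a step the paper leaves implicit.

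However, the identification $M\otimes_\Lambda^{\mathbb L}\Lambdae\cong M\otimes_k^{\mathbb L}\Lambda$ is not mere bookkeeping and fails in general. Taking a $\Lambda$-projective resolution $P\to M$, the left-hand side is computed as $P\otimes_\Lambda\Lambdae\cong P\otimes_k\Lambda$, an \emph{underived} tensor over $k$; this agrees with $M\otimes_k^{\mathbb L}\Lambda$ only when $P$ consists of $k$-flat modules, i.e.\ only when $\Lambda$ itself is flat (equivalently projective, being finitely generated over noetherian $k$) over $k$. Nothing in the section's standing hypotheses forces this. Concretely, with $k=\mathbb{Z}$ and $\Lambda=\mathbb{Z}/4$ one has $\Lambdae\cong\Lambda$ as a ring and as the regular bimodule, so $M\otimes_\Lambda^{\mathbb L}\Lambdae\cong M$, which is not in $\perf\Lambda$ for $M=\mathbb{Z}/2$, while $M\otimes_{\mathbb{Z}}^{\mathbb L}\Lambda$ is always perfect by regularity of $\mathbb{Z}$. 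So either the hypothesis that $\Lambda$ is projective over $k$ must be added (this is very likely what the paper tacitly assumes, and is automatic for the Artin-algebra examples), or a different justification for the perfection of $M\otimes_\Lambda^{\mathbb L}\Lambdae$ is needed; as written, the step is a genuine gap.
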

\begin{proof}
For (i) it suffices to show that the unit $\eta$ of the adjunction $\left(-\otimes_{\Lambda}^{\mathbb L}\Gamma, \res\right)$ is an isomorphism on the level of singularity categories. For this purpose, notice that the triangle
\[
\Lambda \xrightarrow{f}\Gamma \to \Cone(f) \to \Lambda[1]
\]
in $\Db(\modules \Lambdae)$ will induce, for each $X\in\Db(\modules \Lambda)$, the triangle
\[
X\otimes_{\Lambda}^{\mathbb L}\Cone(f)[-1] \to X \xrightarrow{\eta_X} X\otimes_{\Lambda}^{\mathbb L}\Gamma \to X\otimes_{\Lambda}^{\mathbb L}\Cone(f)
\]
in $\Db(\modules \Lambda)$. By assumption the outer terms belong to $\perf \Lambda$, which means that $\eta$ becomes an isomorphism in the quotient category $\Dsg(\Lambda)$ as desired. Claim (ii) is now immediate, since the stable categories of Gorenstein projective modules are full subcategories of the respective singularity categories.
\end{proof}

We collect an immediate consequence, using Proposition~\ref{prop:coneperfectimpliesfullyfaithful} in order to view $\Gstable\Lambda$ and $\Dsg(\Lambda)$ as subcategories of $\Gstable\Gamma$ and $\Dsg(\Gamma)$, respectively.

\begin{corollary} \label{cor:twotorsionpair}
Suppose $\Cone(f)\in\perf\Lambdae$. Then we have the following.
\begin{enumerate}[label=\emph{(\roman*)}]
\item The pair of subcategories
\[
\left(\Dsg(\Lambda), \Ker (\res_{\Dsg}) \right)
\]
is a stable $t$-structure in $\Dsg(\Gamma)$. In particular, the fully faithful functor $-\otimes_{\Lambda}^{\mathbb L}\Gamma$ induces a triangle equivalence
\[
\Dsg(\Lambda)\cong\Dsg(\Gamma)/\Ker(\res_{\Dsg}).
\]
\item If moreover $\RHom_{\Lambda}(\Gamma, \Lambda)$ lies in $\perf \Gamma$, then the pair of subcategories
\[
\left(\Gstable \Lambda, \Ker (\res) \right)
\]
is a stable $t$-structure in $\Gstable \Gamma $. In this case the fully faithful functor $-\otimes_{\Lambda}\Gamma$ induces a triangle equivalence
\[
\Gstable\Lambda\cong\Gstable\Gamma/\Ker(\res).
\]
\end{enumerate}
\end{corollary}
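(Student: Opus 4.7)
The plan is to verify the defining properties of a stable $t$-structure for the pair $(\X, \Y) = (\Dsg(\Lambda), \Ker(\res_{\Dsg}))$ in $\Dsg(\Gamma)$, where $\Dsg(\Lambda)$ is identified with its essential image under the fully faithful embedding from Proposition~\ref{prop:coneperfectimpliesfullyfaithful}(i). Both classes are triangulated subcategories closed under all suspensions: $\X$ as the essential image of a triangle functor, $\Y$ as the kernel of one. The orthogonality $\Hom_{\Dsg(\Gamma)}(\X,\Y)=0$ is immediate from the adjunction $(-\otimes_{\Lambda}^{\mathbb L}\Gamma,\res_{\Dsg})$ of Theorem~\ref{thm finite main}, since for $X \in \Dsg(\Lambda)$ and $Y \in \Ker(\res_{\Dsg})$ one has $\Hom(X\otimes_\Lambda^{\mathbb L}\Gamma, Y) \cong \Hom(X, \res_{\Dsg} Y) = 0$.

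The main task is to produce, for each $T \in \Dsg(\Gamma)$, a decomposition triangle. For this I would take the counit
\[ \varepsilon_T \colon (\res_{\Dsg} T) \otimes_{\Lambda}^{\mathbb L}\Gamma \to T \]
and complete it to a triangle
\[ (\res_{\Dsg} T) \otimes_{\Lambda}^{\mathbb L}\Gamma \xrightarrow{\varepsilon_T} T \to C_T \to (\res_{\Dsg} T) \otimes_{\Lambda}^{\mathbb L}\Gamma [1]. \]
The leftmost object manifestly lies in $\X$, so the crux is to show $C_T \in \Ker(\res_{\Dsg})$. Here the input from Proposition~\ref{prop:coneperfectimpliesfullyfaithful} is decisive: full faithfulness of $-\otimes_{\Lambda}^{\mathbb L}\Gamma$ is equivalent to the unit $\eta$ being a natural isomorphism, and then the triangle identity $\res_{\Dsg}(\varepsilon_T) \circ \eta_{\res_{\Dsg} T} = \id_{\res_{\Dsg} T}$ forces $\res_{\Dsg}(\varepsilon_T)$ to be an isomorphism as well. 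Applying the triangle functor $\res_{\Dsg}$ to the above triangle then yields $\res_{\Dsg}(C_T) = 0$, as required.

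With the stable $t$-structure in place, the equivalence $\Dsg(\Lambda) \cong \Dsg(\Gamma)/\Ker(\res_{\Dsg})$ is the formal consequence recalled at the end of Subsection~\ref{lastsubsection}: a stable $t$-structure $(\X,\Y)$ always induces a triangle equivalence $\X \cong \T/\Y$ via the composition $\X \hookrightarrow \T \to \T/\Y$. Part (ii) is handled by exactly the same recipe applied to the adjoint pair $(-\otimes_{\Lambda}\Gamma, \res)$ between $\Gstable \Lambda$ and $\Gstable \Gamma$, which is available under the additional hypothesis $\RHom_{\Lambda}(\Gamma, \Lambda) \in \perf \Gamma$ by Theorem~\ref{thm finite main}, together with the full faithfulness supplied by Proposition~\ref{prop:coneperfectimpliesfullyfaithful}(ii); one should only check that this adjunction really is the restriction of the $\Dsg$-level adjunction, which is automatic since $\Gstable \hookrightarrow \Dsg$ is fully faithful. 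The hardest step is really the verification that $\res_{\Dsg}(C_T) = 0$ via the triangle identity; the rest is formal adjunction yoga together with the closure properties of essential images and kernels.
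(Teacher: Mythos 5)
Your proof is correct and supplies exactly the standard argument that the paper leaves implicit when it calls the statement "an immediate consequence" of Proposition~\ref{prop:coneperfectimpliesfullyfaithful}: full faithfulness of the left adjoint makes the unit invertible, the triangle identity then makes $\res_{\Dsg}(\varepsilon_T)$ invertible, and the cone of $\varepsilon_T$ therefore lies in $\Ker(\res_{\Dsg})$. Since the paper gives no explicit proof, there is no divergence to compare; your write-up is the natural one.
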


Let us include an easy application. Recall from \cite{MR1140607} that $f$ is a \emph{homological epimorphism} if $\res\colon\Db(\modules\Gamma) \to \Db(\modules \Lambda)$ is fully faithful. In this case also the functor $\res\colon\Dsg(\modules\Gamma) \to \Dsg(\modules \Lambda)$ is fully faithful by Lemma~\ref{lemadjointquotient}. In particular, the kernel of the latter then vanishes, so Corollary~\ref{cor:twotorsionpair} reveals the following.

\begin{corollary}
\label{chen'sresult}
Suppose $f$ is a homological epimorphism and that $\Cone(f)\in\perf\Lambdae$. Then $\Lambda$ and $\Gamma$ are singularly equivalent.
\end{corollary}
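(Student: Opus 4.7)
The strategy is to recognize that the claim follows almost immediately by assembling pieces already in place: Corollary~\ref{cor:twotorsionpair}(i) produces, using only the hypothesis $\Cone(f)\in\perf\Lambdae$, a triangle equivalence
$$\Dsg(\Lambda)\cong\Dsg(\Gamma)/\Ker(\res_{\Dsg}).$$
Therefore, to obtain a genuine singular equivalence, it is enough to show that the additional homological epimorphism assumption forces $\Ker(\res_{\Dsg})=0$.

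To that end, I would first invoke Lemma~\ref{lemadjointquotient} on the adjoint pair $(-\otimes^{\mathbb L}_{\Lambda}\Gamma,\res)$ acting between the bounded derived categories. By the definition recalled just before the statement, $f$ being a homological epimorphism means precisely that $\res\colon\Db(\modules\Gamma)\to\Db(\modules\Lambda)$ is fully faithful. Moreover, the subcategories $\perf\Lambda\subset\Db(\modules\Lambda)$ and $\perf\Gamma\subset\Db(\modules\Gamma)$ are preserved by the two adjoints: the left adjoint trivially sends perfect complexes to perfect complexes, while $\res$ sends $\perf\Gamma$ to $\perf\Lambda$ because the standing assumption $\pdim{}_\Lambda\Gamma<\infty$ is in force. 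Hence Lemma~\ref{lemadjointquotient} applies and tells us that the induced functor $\res\colon\Dsg(\Gamma)\to\Dsg(\Lambda)$ is again fully faithful.

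Since a fully faithful functor has trivial kernel, we conclude that $\Ker(\res_{\Dsg})=0$. Plugging this into the equivalence furnished by Corollary~\ref{cor:twotorsionpair}(i) yields
$$\Dsg(\Lambda)\cong\Dsg(\Gamma)/0\cong\Dsg(\Gamma),$$
which is the desired singular equivalence. There is no substantial obstacle: the only point requiring verification is that both $-\otimes^{\mathbb L}_\Lambda\Gamma$ and $\res$ respect the perfect subcategories, which was already established in the proof of the adjunction on singularity categories earlier in the section, so the corollary follows essentially by citation.
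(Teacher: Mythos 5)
Your argument matches the paper's own: the definition of homological epimorphism gives full faithfulness of $\res$ on $\Db$, Lemma~\ref{lemadjointquotient} (strictly, its evident dual applied to the right adjoint of the pair) then yields full faithfulness of $\res$ on $\Dsg$ once one checks both functors preserve perfect complexes, so $\Ker(\res_{\Dsg})=0$ and Corollary~\ref{cor:twotorsionpair}(i) finishes. One small slip to note: $\res$ sends $\perf\Gamma$ into $\perf\Lambda$ because $\pdim\Gamma_\Lambda<\infty$ (modules are right modules here), not because $\pdim{}_\Lambda\Gamma<\infty$ as you wrote, though both are standing assumptions in the section so the argument is unaffected.
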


Observe that the main result of \cite{MR3209319} is recovered as the case $f\colon\Lambda \to \Lambda/I$ for $\Lambda$ a finite dimensional algebra with a homological ideal $I$ of finite projective dimension as bimodule.

\subsection*{Examples}

We illustrate Proposition~\ref{prop:coneperfectimpliesfullyfaithful} and Corollary~\ref{cor:twotorsionpair} with two examples --- both of which will first be presented in a generic form and then illustrated completely explicitly for certain Nakayama algebras.

\begin{corollary} \label{cor.ideal is tensor}
Let $\Lambda$ be a module finite algebra over some regular commutative noetherian ring $k$. Suppose $I$ is an ideal of $\Lambda$ which, as a bimodule, is isomorphic to a tensor product \[I \cong M \otimes_k N,\] where $M$ and $N$ are left and right $\Lambda$-modules, respectively, both of finite projective dimension. Then the functors
\[
-\otimes_{\Lambda}^{\mathbb L} \Lambda/I \colon\Dsg(\Lambda)\to\Dsg(\Lambda / I) \text{ and }-\otimes_{\Lambda}\Lambda/I\colon\Gstable \Lambda \to \Gstable \Lambda / I
\]
are both fully faithful.
\end{corollary}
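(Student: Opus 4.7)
The idea is to apply Proposition~\ref{prop:coneperfectimpliesfullyfaithful} with $\Gamma = \Lambda/I$ and $f \colon \Lambda \twoheadrightarrow \Lambda/I$ the canonical surjection. Its cone identifies (up to a shift) with the bimodule $I \cong M \otimes_k N$, and parts (i) and (ii) of that proposition then deliver precisely the two full-faithfulness statements. The task is thereby reduced to verifying the standing hypothesis of Section~\ref{section:singularitycategories} --- namely that $\Lambda/I$ has finite projective dimension both as left and as right $\Lambda$-module --- together with the extra condition $\Cone(f) \in \perf\Lambdae$.

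The heart of the argument is the perfectness condition on $\Cone(f)$. Using that $M$ and $N$ have finite projective dimension over $\Lambda$, choose finite resolutions $P^{\bullet} \to M$ and $Q^{\bullet} \to N$ by finitely generated projective left and right $\Lambda$-modules respectively. Each term $P^{i} \otimes_k Q^{j}$ of the bicomplex $P^{\bullet} \otimes_k Q^{\bullet}$ is a direct summand of a finite direct sum of copies of $\Lambda \otimes_k \Lambda \cong \Lambdae$, so is a finitely generated projective $\Lambda$-bimodule. The totalisation $\Tot(P^{\bullet} \otimes_k Q^{\bullet})$ is consequently a bounded complex of finitely generated projective bimodules, hence an object of $\perf\Lambdae$, and a K\"unneth-type spectral sequence argument identifies it in $\D(\Lambdae)$ with the bimodule $M \otimes_k N \cong I$.

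The remaining one-sided condition on $\Lambda/I$ is then essentially a byproduct: regarded as a complex of right (respectively left) $\Lambda$-modules, the same totalisation is still a finite projective resolution of $I$, since each $P^{i} \otimes_k Q^{j}$ is projective over $\Lambda$ on either side (being a summand of a free bimodule). Feeding this into the short exact sequence $0 \to I \to \Lambda \to \Lambda/I \to 0$ yields finite projective dimension of $\Lambda/I$ on both sides, so that Proposition~\ref{prop:coneperfectimpliesfullyfaithful} applies and furnishes both full faithfulness assertions.

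The chief technical point is the K\"unneth identification of the totalisation with the plain bimodule $M \otimes_k N$ rather than with the derived tensor $M \otimes_k^{\mathbb L} N$; the discrepancy is controlled by the vanishing of $\Tor^k_{>0}(M,N)$, which is automatic when $k$ is a field (as in the Nakayama examples that are to illustrate the corollary) and is where the regularity of $k$ is exploited more generally.
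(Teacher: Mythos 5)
Your approach coincides with the paper's: apply Proposition~\ref{prop:coneperfectimpliesfullyfaithful} to the quotient map $f\colon\Lambda\to\Lambda/I$, use $\Cone(f)\cong I[1]$, and reduce to showing $M\otimes_k N\in\perf\Lambdae$. You are considerably more explicit than the paper's two-line proof, and your observation that the one-sided finiteness of $\pdim\,\Lambda/I$ comes for free from the perfectness of $I$ via $0\to I\to\Lambda\to\Lambda/I\to 0$ is correct and is also the reasoning implicit in the paper.

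The last paragraph, however, contains a genuine error. You assert that the vanishing of $\Tor^k_{>0}(M,N)$ "is where the regularity of $k$ is exploited more generally." Regularity of $k$ does \emph{not} imply $\Tor^k_{>0}(M,N)=0$: take $k=\mathbb{Z}$ (regular) and $M$, $N$ both $2$-torsion, so $\Tor^{\mathbb{Z}}_1(M,N)\neq 0$. Moreover there is a second, unacknowledged, difficulty: to conclude that $\Tot(P^\bullet\otimes_k Q^\bullet)$ even computes $M\otimes_k^{\mathbb{L}}N$ you need the terms $P^i$ (or $Q^j$) to be $k$-flat, and a projective $\Lambda$-module need not be $k$-flat unless $\Lambda$ itself is $k$-flat --- which fails already for $\Lambda=\mathbb{Z}\times\mathbb{Z}/2$ over $k=\mathbb{Z}$. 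So the K\"unneth identification of $\Tot(P^\bullet\otimes_k Q^\bullet)$ with $M\otimes_k N$ is automatic when $k$ is a field (and this covers both of the paper's examples), but it requires additional hypotheses such as $k$-projectivity of $\Lambda$ together with the $\Tor$-vanishing you mention in general. It is worth noting that the paper's own proof --- ``This is guaranteed if $I$ is a tensor product of two perfect modules'' --- is equally silent on these points; you at least surfaced the issue, but the diagnosis that regularity of $k$ resolves it should be retracted or replaced by an explicit flatness or field hypothesis.
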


\begin{proof}
The cone of $\Lambda \to \Lambda/I$ is $I[1]$. Thus, by Proposition~\ref{prop:coneperfectimpliesfullyfaithful} it suffices for $I$ to be a perfect bimodule. This is guaranteed if $I$ is a tensor product of two perfect modules.
\end{proof}

\begin{corollary}\label{cor.1-dim ideal}
Let $\Lambda$ be a finite dimensional algebra over field $k$, and let $I$ be a $1$-dimensional ideal. Assume that the projective dimension of $I$ is finite both as left and as right module. Then the functors
\[
-\otimes_{\Lambda}^{\mathbb L} \Lambda/I \colon\Dsg(\Lambda)\to\Dsg(\Lambda / I)\text{ and }-\otimes_{\Lambda}\Lambda/I\colon\Gstable \Lambda \to \Gstable \Lambda / I
\] are both fully faithful.
\end{corollary}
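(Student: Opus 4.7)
The plan is to reduce to Corollary~\ref{cor.ideal is tensor} by establishing that the 1-dimensional bimodule $I$ splits as a tensor product over $k$. First, I would note that a finite dimensional $k$-algebra is automatically module finite over the field $k$, which is regular, so the hypotheses on the base ring required by Corollary~\ref{cor.ideal is tensor} are met.

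The central observation is that a 1-dimensional $\Lambda$-bimodule has a completely transparent structure: since $I\cong k$ as a $k$-vector space, the left and right actions are given by algebra homomorphisms $\chi_L, \chi_R \colon \Lambda \to \End_k(I) \cong k$, and the compatibility of the two actions is automatic because $\End_k(I)$ is commutative. Letting $M = {}_{\chi_L}k$ be the 1-dimensional left $\Lambda$-module and $N = k_{\chi_R}$ the 1-dimensional right $\Lambda$-module defined by these characters, there is an evident isomorphism of $\Lambda$-bimodules
\[
I \;\cong\; M \otimes_k N.
\]

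Next I would check that $M$ and $N$ have finite projective dimension. Because $N$ is 1-dimensional as a $k$-vector space, $M\otimes_k N \cong M$ as left $\Lambda$-modules, so the assumption on $I$ gives $\pdim{}_\Lambda M = \pdim{}_\Lambda I < \infty$. Symmetrically, $\pdim N_\Lambda = \pdim I_\Lambda < \infty$. Having produced a decomposition $I \cong M\otimes_k N$ satisfying the hypotheses of Corollary~\ref{cor.ideal is tensor}, the conclusion follows directly.

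I do not anticipate any serious obstacle; the only thing one must be a little careful about is verifying that the bimodule structure on a 1-dimensional ideal really does factor through a tensor product of one-sided modules, which is immediate once one writes down the characters $\chi_L$ and $\chi_R$.
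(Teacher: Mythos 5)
Your proof is correct and follows essentially the same route as the paper: the paper states the bimodule decomposition tersely as $I \cong I \otimes_k I$ (first factor as a left module, second as a right module) and invokes Corollary~\ref{cor.ideal is tensor}, which is exactly your $M \otimes_k N$ with $M = {}_{\Lambda}I$ and $N = I_{\Lambda}$. Your version just spells out the characters explicitly and verifies the finite projective dimension of the two one-sided factors, which the paper leaves implicit.
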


\begin{proof}
Since $I$ is $1$-dimensional, we have $I \cong I \otimes_k I$ as bimodules. Thus the claim follows from Corollary~\ref{cor.ideal is tensor} above.
\end{proof}

\begin{example}
\label{firstexample}
Let $k$ be a field and $\Lambda = k[\!\!\!
\begin{tikzpicture}[baseline=-1mm]
 \matrix (m) [matrix of math nodes, column sep = .6cm] {
  1 & 2 \\ };
  \draw [->, bend right] (m-1-1) to node [below] {$\scriptstyle a$} (m-1-2);
  \draw [->, bend right] (m-1-2) to node [above] {$\scriptstyle b$} (m-1-1);
\end{tikzpicture}
\!\!\!] / (ab)^n$ for some $n > 0$.

Consider the ideal $I = (ba)^n$. Then $I$ is isomorphic to the simple at vertex $1$ both as left and as right module, and moreover these simples have projective dimension $1$. Thus Corollary~\ref{cor.1-dim ideal} applies, and we obtain fully faithful functors
\[
-\otimes_{\Lambda}^{\mathbb L} \Lambda/I \colon\Dsg(\Lambda)\to\Dsg(\Lambda / I) \text{ and }
-\otimes_{\Lambda}\Lambda/I \colon\Gstable \Lambda \to \Gstable \Lambda / I.
\]
In fact, in this example both $\Lambda$ and $\Lambda / I$ are Gorenstein, so the singularity categories coincide with the respective stable categories of Gorenstein projective modules. Moreover, $\Lambda / I$ is even self-injective, so $\Dsg( \Lambda / I) = \underline{\modules}\,\Lambda / I$. One may observe that
\[
\Ker( \res_{\Dsg} ) = \{ M \in \underline{\modules}(\Lambda / I) \mid \pdim M_{\Lambda}<\infty \} = \add \{ S_1, P_2 / S_1 \}.
\]
It now follows from Corollary~\ref{cor:twotorsionpair} that
\[
\Dsg(\Lambda)  = \prescript{\perp}{}{\Ker( \res_{\Dsg} )} = \add \{ P_2 / \rad^2 P_2, P_2 / \rad^4 P_2, \ldots, P_2 / \rad^{2n-2} P_2 \}.
\]
We illustrate the subcategories inside the Auslander--Reiten quiver for $n=3$:
\[ \begin{tikzpicture}
 \node at (.8, 7) {$\modules \Lambda$};
 \node [draw] (1) at (0,0) {$1$};
 \node (2) at (2,0) {$2$};
 \node (1+) at (4,0) {$1$};
 \node (12) at (3,1) {$\begin{smallmatrix} 1 \\ 2 \end{smallmatrix}$};
 \node (21) [draw, ellipse] at (1,1) {$\begin{smallmatrix} 2 \\ 1 \end{smallmatrix}$};
 \node (121) at (2,2) {$\begin{smallmatrix} 1 \\ 2 \\ 1 \end{smallmatrix}$};
 \node (212) at (0,2) {$\begin{smallmatrix} 2 \\ 1 \\ 2 \end{smallmatrix}$};
 \node (212+) at (4,2) {$\begin{smallmatrix} 2 \\ 1 \\ 2 \end{smallmatrix}$};
 \node (1212) at (1,3) {$\begin{smallmatrix} 1 \\ 2 \\ 1 \\ 2 \end{smallmatrix}$};
 \node (2121) [draw, ellipse] at (3,3) {$\begin{smallmatrix} 2 \\ 1 \\ 2 \\ 1 \end{smallmatrix}$};
 \node (12121) at (0,4) {$\begin{smallmatrix} 1 \\ 2 \\ 1 \\ 2 \\ 1 \end{smallmatrix}$};
 \node (21212) [draw] at (2,4) {$\begin{smallmatrix} 2 \\ 1 \\ 2 \\ 1 \\ 2 \end{smallmatrix}$};
 \node (12121+) at (4,4) {$\begin{smallmatrix} 1 \\ 2 \\ 1 \\ 2 \\ 1 \end{smallmatrix}$};
 \node (121212) [draw] at (3,5) {$\begin{smallmatrix} 1 \\ 2 \\ 1 \\ 2 \\ 1 \\ 2 \end{smallmatrix}$};
 \node (212121) [draw] at (1,5) {$\begin{smallmatrix} 2 \\ 1 \\ 2 \\ 1 \\ 2 \\ 1 \end{smallmatrix}$};
 \node (1212121) [draw] at (2,6) {$\begin{smallmatrix} 1 \\ 2 \\ 1 \\ 2 \\ 1 \\ 2 \\ 1 \end{smallmatrix}$};
 \draw [->] (1) to (21);
 \draw [->] (21) to (2);
 \draw [->] (2) to (12);
 \draw [->] (12) to (1+);
 \draw [->] (212) to (21);
 \draw [->] (21) to (121);
 \draw [->] (121) to (12);
 \draw [->] (12) to (212+);
 \draw [->] (212) to (1212);
 \draw [->] (1212) to (121);
 \draw [->] (121) to (2121);
 \draw [->] (2121) to (212+);
 \draw [->] (12121) to (1212);
 \draw [->] (1212) to (21212);
 \draw [->] (21212) to (2121);
 \draw [->] (2121) to (12121+);
 \draw [->] (12121) to (212121);
 \draw [->] (212121) to (21212);
 \draw [->] (21212) to (121212);
 \draw [->] (121212) to (12121+);
 \draw [->] (212121) to (1212121);
 \draw [->] (1212121) to (121212);
 \draw [dotted] (1) to (2);
 \draw [dotted] (2) to (1+);
 \draw [dotted] (0,1) to (21);
 \draw [dotted] (21) to (12);
 \draw [dotted] (12) to (4,1);
 \draw [dotted] (212) to (121);
 \draw [dotted] (121) to (212+);
 \draw [dotted] (0,3) to (1212);
 \draw [dotted] (1212) to (2121);
 \draw [dotted] (2121) to (4,3);
 \draw [dotted] (12121) to (21212);
 \draw [dotted] (21212) to (12121+);
 \draw [dotted] (212121) to (121212);
 \draw [dashed] (0,7) -- (12121);
 \draw [dashed] (12121) -- (212);
 \draw [dashed] (212) -- (1);
 \draw [dashed] (1) -- (0,-.5);
 \draw [dashed] (4,7) -- (12121+);
 \draw [dashed] (12121+) -- (212+);
 \draw [dashed] (212+) -- (1+);
 \draw [dashed] (1+) -- (4,-.5);
\end{tikzpicture}
\qquad
\begin{tikzpicture}
 \node at (.8, 7) {$\modules \Lambda/I$};
 \node (1) [draw, ellipse] at (0,0) {$1$};
 \node (2) [draw, ellipse] at (2,0) {$2$};
 \node (1+) [draw, ellipse] at (4,0) {$1$};
 \node (12) [draw, ellipse] at (3,1) {$\begin{smallmatrix} 1 \\ 2 \end{smallmatrix}$};
 \node (21) [draw, ellipse] at (1,1) {$\begin{smallmatrix} 2 \\ 1 \end{smallmatrix}$};
 \node (121) [draw, ellipse] at (2,2) {$\begin{smallmatrix} 1 \\ 2 \\ 1 \end{smallmatrix}$};
 \node (212) [draw, ellipse] at (0,2) {$\begin{smallmatrix} 2 \\ 1 \\ 2 \end{smallmatrix}$};
 \node (212+) [draw, ellipse] at (4,2) {$\begin{smallmatrix} 2 \\ 1 \\ 2 \end{smallmatrix}$};
 \node (1212) [draw, ellipse] at (1,3) {$\begin{smallmatrix} 1 \\ 2 \\ 1 \\ 2 \end{smallmatrix}$};
 \node (2121) [draw, ellipse] at (3,3) {$\begin{smallmatrix} 2 \\ 1 \\ 2 \\ 1 \end{smallmatrix}$};
 \node (12121) [draw, ellipse] at (0,4) {$\begin{smallmatrix} 1 \\ 2 \\ 1 \\ 2 \\ 1 \end{smallmatrix}$};
 \node (21212) [draw, ellipse] at (2,4) {$\begin{smallmatrix} 2 \\ 1 \\ 2 \\ 1 \\ 2 \end{smallmatrix}$};
 \node (12121+) [draw, ellipse] at (4,4) {$\begin{smallmatrix} 1 \\ 2 \\ 1 \\ 2 \\ 1 \end{smallmatrix}$};
 \node (121212) [draw] at (3,5) {$\begin{smallmatrix} 1 \\ 2 \\ 1 \\ 2 \\ 1 \\ 2 \end{smallmatrix}$};
 \node (212121) [draw] at (1,5) {$\begin{smallmatrix} 2 \\ 1 \\ 2 \\ 1 \\ 2 \\ 1 \end{smallmatrix}$};
 \draw [->] (1) to (21);
 \draw [->] (21) to (2);
 \draw [->] (2) to (12);
 \draw [->] (12) to (1+);
 \draw [->] (212) to (21);
 \draw [->] (21) to (121);
 \draw [->] (121) to (12);
 \draw [->] (12) to (212+);
 \draw [->] (212) to (1212);
 \draw [->] (1212) to (121);
 \draw [->] (121) to (2121);
 \draw [->] (2121) to (212+);
 \draw [->] (12121) to (1212);
 \draw [->] (1212) to (21212);
 \draw [->] (21212) to (2121);
 \draw [->] (2121) to (12121+);
 \draw [->] (12121) to (212121);
 \draw [->] (212121) to (21212);
 \draw [->] (21212) to (121212);
 \draw [->] (121212) to (12121+);
 \draw [dotted] (1) to (2);
 \draw [dotted] (2) to (1+);
 \draw [dotted] (0,1) to (21);
 \draw [dotted] (21) to (12);
 \draw [dotted] (12) to (4,1);
 \draw [dotted] (212) to (121);
 \draw [dotted] (121) to (212+);
 \draw [dotted] (0,3) to (1212);
 \draw [dotted] (1212) to (2121);
 \draw [dotted] (2121) to (4,3);
 \draw [dotted] (12121) to (21212);
 \draw [dotted] (21212) to (12121+);
 \draw [dashed] (0,7) -- (12121);
 \draw [dashed] (12121) -- (212);
 \draw [dashed] (212) -- (1);
 \draw [dashed] (1) -- (0,-.5);
 \draw [dashed] (4,7) -- (12121+);
 \draw [dashed] (12121+) -- (212+);
 \draw [dashed] (212+) -- (1+);
 \draw [dashed] (1+) -- (4,-.5);
\end{tikzpicture} \]
In both cases, the dashed lines are identified. The rectangles mark modules of finite projective dimension (including the projective modules at the top), and the ellipses mark non-projective Gorenstein projective modules. In the picture clearly the ellipses on the left are a subset of those on the right.
\end{example}

\begin{corollary} \label{cor.trivialext}
Let $\Lambda$ be module finite over a regular commutative noetherian ring $k$. Assume $M$ and $N$ are left and right $\Lambda$-modules of finite projective dimension, respectively. Set
\[ \Gamma = \Lambda \ltimes (M \otimes_k N). \]
Then there are fully faithful functors
\[
-\otimes_{\Lambda}^{\mathbb L}\Gamma\colon\Dsg(\Lambda)\to\Dsg(\Gamma) \text{ and } -\otimes_{\Lambda}\Gamma\colon\Gstable \Lambda \to \Gstable \Gamma.
\]
\end{corollary}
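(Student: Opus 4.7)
The plan is to apply Proposition~\ref{prop:coneperfectimpliesfullyfaithful} to the canonical inclusion $f\colon\Lambda\hookrightarrow\Gamma$ of $k$-algebras. Two hypotheses must be verified: first, the standing assumption of this section that $\Gamma$ has finite projective dimension both as a left and as a right $\Lambda$-module; second, the cone condition $\Cone(f)\in\perf\Lambdae$. Once these are in place, the proposition directly yields the fully faithfulness of both $-\otimes_\Lambda^{\mathbb L}\Gamma$ and $-\otimes_\Lambda\Gamma$ asserted in the corollary.

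For the first hypothesis, note that $\Gamma=\Lambda\oplus(M\otimes_k N)$ as left (respectively right) $\Lambda$-module, so it suffices that $M\otimes_k N$ has finite $\Lambda$-projective dimension on each side. This should follow from $\pdim{}_\Lambda M<\infty$ (respectively $\pdim N_\Lambda<\infty$) together with regularity of $k$, which forces the finitely generated $k$-module $N$ (respectively $M$) to have finite $k$-flat dimension and thereby controls the interaction with the tensor factor.

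For the second hypothesis, $f$ is a split injection of $\Lambda$-bimodules with cokernel $M\otimes_k N$, so $\Cone(f)\simeq(M\otimes_k N)[1]$ in $\Db(\modules\Lambdae)$. Hence what must be shown is $M\otimes_k N\in\perf\Lambdae$. This is exactly the observation already used in the proof of Corollary~\ref{cor.ideal is tensor}: choosing finite projective resolutions $P^\bullet\to M$ over $\Lambda$ and $Q^\bullet\to N$ over $\Lambda^{\op}$, the total complex $P^\bullet\otimes_k Q^\bullet$ has terms $P^i\otimes_k Q^j$ that are summands of $\Lambda^a\otimes_k\Lambda^b=\Lambdae^{ab}$, hence projective over $\Lambdae$; regularity of $k$ then ensures this total complex is quasi-isomorphic to $M\otimes_k N$.

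The crux, and the step requiring the most care, is this perfectness verification: the delicate point is that projective $\Lambda$-modules need not be $k$-flat, so the quasi-isomorphism $P^\bullet\otimes_k Q^\bullet\simeq M\otimes_k N$ is not automatic. Regularity of $k$, which provides finite flat dimension for every finitely generated $k$-module and thus bounds the obstructing derived tensor products over $k$, is the essential input that makes this step go through.
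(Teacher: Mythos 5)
Your strategy matches the paper's exactly: invoke Proposition~\ref{prop:coneperfectimpliesfullyfaithful} for the split inclusion $f\colon\Lambda\hookrightarrow\Gamma=\Lambda\ltimes(M\otimes_k N)$, and check that the standing finite--pd assumption and the cone condition both hold. You are right that the content is really the perfectness of the bimodule $M\otimes_k N$, and your reduction is the same one used in the proof of Corollary~\ref{cor.ideal is tensor}. You are also right to flag the subtle point --- the paper's one-line proof suppresses it entirely.

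Where your proposal goes astray is in the final paragraph. You write that regularity of $k$ ``bounds the obstructing derived tensor products over $k$'' and that this ``is the essential input that makes this step go through.'' But bounding the flat (or projective) dimension of $N$ over $k$ only guarantees that $\Tor_i^k(M,N)$ vanishes for $i$ large; it does \emph{not} make the groups $\Tor_i^k(M,N)$ vanish for the small positive $i$ that actually obstruct the quasi-isomorphism. If $\Tor_1^k(M,N)\neq 0$, then $\Tot(P^\bullet\otimes_k Q^\bullet)$ has nonzero homology off degree zero, and so is not a projective resolution of $M\otimes_k N$. The same phenomenon threatens the standing assumption: resolving $M$ by $\Lambda$-projectives and applying $-\otimes_k N$ need not stay exact unless the terms are $k$-flat, which (since $\Lambda$ itself need not be $k$-flat) is not guaranteed. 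What actually closes the gap is an extra hypothesis such as $\Lambda$ being flat over $k$ --- automatic when $k$ is a field, which is the setting of all the paper's concrete examples, but not implied by $k$ being regular noetherian. So your appeal to regularity alone is not a valid argument; either supply the flatness (or $\Tor$-vanishing) hypothesis explicitly, or restrict to $k$ a field. The paper's proof is equally silent on this point, so you are not alone, but the step as you wrote it does not go through.
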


\begin{proof}
This is an immediate application of Proposition~\ref{prop:coneperfectimpliesfullyfaithful} above.
\end{proof}

\begin{example}
\label{secondexample}
Let $k$ be a field and $\Lambda = k\Bigg[\!\!\!
\begin{tikzpicture}[baseline=-0mm]
 \matrix (m) [matrix of math nodes, column sep = .2cm] {
  1 && 2 \\ & 3 & \\};
  \draw [->] (m-1-1) to node [above] {$\scriptstyle a$} (m-1-3);
  \draw [->] (m-1-3) to node [right, shift={(-.05,-.2)}] {$\scriptstyle b$} (m-2-2);
  \draw [->] (m-2-2) to node [left, shift={(.1,-.2)}] {$\scriptstyle c$} (m-1-1);
\end{tikzpicture}\!\!\!\Bigg] \big/ (cb, bac)$.

We see that the simple at vertex $2$ has projective dimension $1$ as right $\Lambda$-module, and so does the simple at vertex $1$ as left $\Lambda$-module. Therefore, for
\[
\Gamma = \Lambda \ltimes (S_1^{\mathsf{left}} \otimes_k S_2) = k\Bigg[\!\!\!
\begin{tikzpicture}[baseline=-0mm]
 \matrix (m) [matrix of math nodes, column sep = .2cm] {
  1 && 2 \\ & 3 & \\};
  \draw [->] (m-1-1) to node [above] {$\scriptstyle a$} (m-1-3);
  \draw [->] (m-1-3) to node [right, shift={(-.05,-.2)}] {$\scriptstyle b$} (m-2-2);
  \draw [->] (m-2-2) to node [left, shift={(.1,-.2)}] {$\scriptstyle c$} (m-1-1);
  \draw [->,bend right=60] (m-1-3) to node [above] {$\scriptstyle x$} (m-1-1);
\end{tikzpicture}\!\!\!\Bigg] \big/ (cb, bac,ax, xa),
\]
Corollary~\ref{cor.trivialext} applies and we obtain fully faithful functors
\[
-\otimes_{\Lambda}^{\mathbb L} \Gamma  \colon\Dsg(\Lambda)\to\Dsg(\Gamma) \text{ and }
-\otimes_{\Lambda}\Gamma  \colon\Gstable \Lambda \to \Gstable \Gamma.
\]
As before, we illustrate the relevant categories inside the Auslander--Reiten quivers in order to get an immediate visual confirmation of the result.
\[ \begin{tikzpicture}
 \node at (-2.5, 2) {$\modules \Lambda$};
 \node (1) at (0,0) {$1$};
 \node (2) [draw] at (2,0) {$2$};
 \node (3) [draw, ellipse] at (4,0) {$3$};
 \node (1+) at (6,0) {$1$};
 \node (13-) [draw] at (-1,1) {$\begin{smallmatrix} 1 \\ 3 \end{smallmatrix}$};
 \node (32) at (3,1) {$\begin{smallmatrix} 3 \\ 2 \end{smallmatrix}$};
 \node (21) [draw, ellipse] at (1,1) {$\begin{smallmatrix} 2 \\ 1 \end{smallmatrix}$};
 \node (13) [draw] at (5,1) {$\begin{smallmatrix} 1 \\ 3 \end{smallmatrix}$};
 \node (321) [draw] at (2,2) {$\begin{smallmatrix} 3 \\ 2 \\ 1 \end{smallmatrix}$};
 \node (213) [draw] at (0,2) {$\begin{smallmatrix} 2 \\ 1 \\ 3 \end{smallmatrix}$};
 \node (213+) [draw] at (6,2) {$\begin{smallmatrix} 2 \\ 1 \\ 3 \end{smallmatrix}$};
 \draw [->] (13-) to (1);
 \draw [->] (1) to (21);
 \draw [->] (21) to (2);
 \draw [->] (2) to (32);
 \draw [->] (32) to (3);
 \draw [->] (3) to (13);
 \draw [->] (13) to (1+);
 \draw [->] (13-) to (213);
 \draw [->] (213) to (21);
 \draw [->] (21) to (321);
 \draw [->] (321) to (32);
 \draw [->] (13) to (213+);
 \draw [dotted] (1) to (2);
 \draw [dotted] (2) to (3);
 \draw [dotted] (3) to (1+);
 \draw [dotted] (13-) to (21);
 \draw [dotted] (21) to (32);
 \draw [dashed] (-1.5, 2.5) rectangle (0.5, -.5);
 \draw [dashed] (4.5, 2.5) rectangle (6.5, -.5);
\end{tikzpicture} \]
\[ \begin{tikzpicture}
 \node at (-2.5, 2) {$\modules \Gamma$};
 \node (1) at (0,0) {$1$};
 \node (2) [draw, ellipse, double] at (2,0) {$2$};
 \node (3213) at (4,0) {$\begin{smallmatrix} 3 \: 1 \; \\ \; 2 \: 3 \end{smallmatrix}$};
 \node (1+) at (6,0) {$1$};
 \node (13-) [draw, ellipse, double] at (-1,1) {$\begin{smallmatrix} 1 \\ 3 \end{smallmatrix}$};
 \node (32) at (3,1) {$\begin{smallmatrix} 3 \\ 2 \end{smallmatrix}$};
 \node (21) [draw, ellipse] at (1,1) {$\begin{smallmatrix} 2 \\ 1 \end{smallmatrix}$};
 \node (13) [draw, ellipse, double] at (5,1) {$\begin{smallmatrix} 1 \\ 3 \end{smallmatrix}$};
 \node (321) [draw] at (2,2) {$\begin{smallmatrix} 3 \\ 2 \\ 1 \end{smallmatrix}$};
 \node (213) [draw] at (0,2) {$\begin{smallmatrix} 2 \\ 1 \\ 3 \end{smallmatrix}$};
 \node (213+) [draw] at (6,2) {$\begin{smallmatrix} 2 \\ 1 \\ 3 \end{smallmatrix}$};
 \node (132-) at (-1,-1) {$\begin{smallmatrix} 1 \: 3 \\ 2 \end{smallmatrix}$};
 \node (123) [draw] at (3,-1) {$\begin{smallmatrix} 1 \\ 2 \: 3 \end{smallmatrix}$};
 \node (132) at (5,-1) {$\begin{smallmatrix} 1 \: 3 \\ 2 \end{smallmatrix}$};
 \node (3) [draw, ellipse] at (1,-2) {$3$};
 \node (12) at (4,-2) {$\begin{smallmatrix} 1 \\ 2 \end{smallmatrix}$};
 \node (3+) [draw, ellipse] at (7, -2) {$3$};
 \draw [->] (13-) to (1);
 \draw [->] (1) to (21);
 \draw [->] (21) to (2);
 \draw [->] (2) to (32);
 \draw [->] (32) to (3213);
 \draw [->] (3213) to (13);
 \draw [->] (13) to (1+);
 \draw [->] (13-) to (213);
 \draw [->] (213) to (21);
 \draw [->] (21) to (321);
 \draw [->] (321) to (32);
 \draw [->] (13) to (213+);
 \draw [->] (132-) to (1);
 \draw [->] (2) to (123);
 \draw [->] (123) to (3213);
 \draw [->] (3213) to (132);
 \draw [->] (132) to (1+);
 \draw [->] (132-) to (3);
 \draw [->] (3) to (123);
 \draw [->] (123) to (12);
 \draw [->] (12) to (132);
 \draw [->] (132) to (3+);
 \draw [dotted] (1) to (2);
 \draw [dotted] (2) to (3213);
 \draw [dotted] (3213) to (1+);
 \draw [dotted] (13-) to (21);
 \draw [dotted] (21) to (32);
 \draw [dotted] (32) to (13);
 \draw [dotted] (123) to (132);
 \draw [dotted] (3) to (12);
 \draw [dotted] (12) to (3+);
 \draw [dashed] (-1.5, 2.5) to (0.5,2.5) [bend right = 20] to (2, -2.5) [bend right=0] to (-1.5,-2.5) to cycle;
 \draw [dashed] (4.5, 2.5) to (6.5,2.5) [bend right = 20] to (8, -2.5) [bend right=0] to (4.5,-2.5) to cycle;
\end{tikzpicture} \]
The dashed boxes indicate which parts of the quivers are identified. The rectangles indicate modules of finite projective dimension --- for $\Gamma$ these are only the projective modules. In particular, one sees that not all injective modules are inside rectangles, that is the algebras are not Gorenstein. The modules inside ellipses are representatives of the indecomposable objects in the singularity category. (For both algebras, all projective resolutions are eventually periodic, so the singularity categories are Krull--Schmidt with indecomposable objects being represented by the infinite syzygies. However, the singularity category of $\Gamma$ is not a subcategory of the stable module category: Contrary to what the picture indicates, there is no non-zero morphism from $\begin{smallmatrix} 2 \\ 1 \end{smallmatrix}$ to $2$ in the singularity category of $\Gamma$, and this singularity category is in fact semisimple.) The doubly circled modules are the non-projective Gorenstein projective modules. One immediately verifies that the circles in the upper picture are a subcollection of the circles in the lower one, and similarly --- albeit vacuously --- for double circles.
\end{example}

\section{Big singularity categories} \label{sec:bigcat}

We now investigate to what extent change of rings gives rise to functors between the corresponding homotopy categories of projective or injective modules, and prominent subcategories thereof. In particular, the results of this section show that the diagram in Theorem~\ref{thm finite main} is in fact the restriction of a picture that even exists on the level of the compactly generated completions from Subsection~\ref{subsection:compactcompletions}.

The functors $\rho$ and $\lambda$ will occur frequently, and we write $\widetilde\Omega = \Omega \circ[1]$ and $\widetilde\mho = \mho\circ[-1]$ for the sake of brevity --- confer Proposition~\ref{prop nice proj resol}.

\subsection*{Full homotopy categories}

We start by demonstrating how from a morphism of rings subject only to rather lenient assumptions, an abundance of adjoints are induced on the level of full homotopy categories.

\begin{proposition} \label{prop adj for K(Inj)}
Let $f\colon \Lambda \to \Gamma$ be a morphism of rings. If $d = \pdim {}_{\Lambda}\Gamma < \infty$, then there is an adjoint triple of functors
\[ \begin{tikzpicture}
 \node (KIA) at (-1,1) {$\K(\Inj \Lambda)$};
 \node (KIB) at (4,1) {$\K(\Inj \Gamma).$};
 \draw [->,bend left=60] (KIA) to node [above] {$\scriptstyle \lambda (\widetilde\Omega^d - \otimes_{\Lambda} \Gamma)$} (KIB);
 \draw [->,bend right] (KIB) to node [above] {$\scriptstyle \lambda \res$} (KIA);
 \draw [->] (KIA) to node [above] {$\scriptstyle \Hom_{\Lambda}(\Gamma, -)$} (KIB);
\end{tikzpicture} \]

Moreover, if $d' = \pdim \Gamma_{\Lambda} < \infty$, then there is an adjoint triple of functors
\[ \begin{tikzpicture}
 \node (KPA) at (0,0) {$\K(\Proj \Lambda)$};
 \node (KPB) at (5,0) {$\K(\Proj \Gamma).$};
 \draw [->] (KPA) to node [above] {$\scriptstyle - \otimes_{\Lambda} \Gamma$} (KPB);
 \draw [->,bend left] (KPB) to node [above] {$\scriptstyle \rho \res$} (KPA);
 \draw [->,bend right=60] (KPA) to node [above=.5mm] {$\scriptstyle \rho \Hom_{\Lambda}(\Gamma, \widetilde\mho^{d'} - )$} (KPB);
\end{tikzpicture} \]
\end{proposition}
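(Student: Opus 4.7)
The plan is to transport the classical adjoint triple $(-\otimes_\Lambda \Gamma, \res, \Hom_\Lambda(\Gamma, -))$ between the big module categories onto the subcategories of complexes of injectives (respectively projectives) by composing with the reflections $\lambda$ and $\rho$ and with the iterated (co)syzygy functors $\widetilde\Omega^d$ and $\widetilde\mho^{d'}$. I describe the injective triple; the projective one is formally dual. First, $\Hom_\Lambda(\Gamma, -)$ preserves injectives, being right adjoint to the exact functor $\res$, and so restricts to a functor $\K(\Inj\Lambda)\to\K(\Inj\Gamma)$. The rightmost adjunction $(\lambda \res, \Hom_\Lambda(\Gamma, -))$ is then a direct formal consequence: the defining property of $\lambda$ reduces $\Hom_{\K(\Inj\Lambda)}(\lambda\res Y, X)$ to $\Hom_{\K(\Modules \Lambda)}(\res Y, X)$, and the classical $\res$--$\Hom$ adjunction relates this to $\Hom_{\K(\Inj \Gamma)}(Y, \Hom_\Lambda(\Gamma, X))$.

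The leftmost adjunction requires a termwise dimension control, and this is where I expect the main technical obstacle to lie: one needs the key lemma that under $\pdim{}_\Lambda \Gamma \leq d$, every injective right $\Gamma$-module $J$ has injective dimension at most $d$ as a right $\Lambda$-module. Indeed, the standard derived adjunction gives $\RHom_\Lambda(M, \res J) \cong \RHom_\Gamma(M \otimes^{\mathbb L}_\Lambda \Gamma, J)$ for any $\Lambda$-module $M$, and because $J$ is $\Gamma$-injective the right-hand side is the termwise $\Hom_\Gamma(-, J)$ of a complex representing $M \otimes^{\mathbb L}_\Lambda \Gamma$, which has cohomological amplitude at most $d$.

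Granted this lemma, the chain
\begin{align*}
\Hom_{\K(\Inj \Gamma)}(\lambda(\widetilde\Omega^d X \otimes_\Lambda \Gamma), Y)
&\cong \Hom_{\K(\Modules \Lambda)}(\widetilde\Omega^d X, \res Y) \\
&\cong \Hom_{\K(\Modules \Lambda)}(X, \widetilde\mho^d \res Y) \\
&\cong \Hom_{\K(\Inj \Lambda)}(X, \lambda \res Y)
\end{align*}
establishes the remaining adjunction by invoking in turn the universal properties of $\lambda$ and the classical tensor--restriction adjunction, the syzygy--cosyzygy identification of Proposition~\ref{prop syzygy-cosyzygy} (whose termwise Ext-vanishing hypothesis is precisely secured by the dimension lemma, since the terms of $X$ are $\Lambda$-injective and those of $\res Y$ have injective dimension at most $d$), and finally Proposition~\ref{prop nice proj resol} to identify $\widetilde\mho^d \res Y$ with $\lambda \res Y$. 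The projective triple is proved by the dual argument; there the analogous dimension bound is immediate, since every projective $\Gamma$-module is a summand of a coproduct of copies of $\Gamma$ and therefore has projective dimension at most $d' = \pdim \Gamma_\Lambda$ over $\Lambda$. The remaining bookkeeping will be to insert the shift in $\widetilde\Omega^d$ and $\widetilde\mho^{d'}$ exactly where Propositions~\ref{prop nice proj resol} and~\ref{prop syzygy-cosyzygy} each apply exactly once.
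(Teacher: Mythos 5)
Your proof is correct and follows essentially the same route as the paper's: the preservation of injectives by $\Hom_\Lambda(\Gamma,-)$, the formal adjunction with $\lambda\res$, the key observation that $\res Y$ has termwise injective dimension at most $d$ when $Y$ is a complex of injective $\Gamma$-modules, and the resulting chain of isomorphisms via Propositions~\ref{prop nice proj resol} and~\ref{prop syzygy-cosyzygy}. (One very small remark: the hypothesis of Proposition~\ref{prop syzygy-cosyzygy} is secured solely by the injective dimension bound on the terms of $\res Y$; the fact that the terms of $X$ are injective is not needed for the Ext-vanishing.)
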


\begin{proof}
We only discuss the first claim. The second one is dual, and in fact easier since it avoids the slight extra problem of translating left projective to right injective dimension.

First observe that $\Hom_{\Lambda}(\Gamma, -)$ maps injective $\Lambda$-modules to injective $\Gamma$-modules. Indeed $\Hom_{\Gamma}(-, \Hom_{\Lambda}(\Gamma, I)) \cong \Hom_{\Lambda}(\res -, I)$ is exact for any injective $\Lambda$-module $I$. Therefore the bottom functor is well-defined.

Since $\res$ is left adjoint to $\Hom_{\Lambda}( \Gamma, - )$ on the level of homotopy categories of all modules, and $\lambda$ is left adjoint to the inclusion of the homotopy category of injectives into that category, $\lambda \res$ is left adjoint to $\Hom_{\Lambda}(\Gamma, -)$ on the level of homotopy categories of injectives.

Next we observe that the right injective dimension of $\res I$ for injective $\Gamma$-modules $I$ is bounded by $d$. Indeed, since $-\otimes_\Lambda \Gamma$ is left adjoint to $\res$ we have an isomorphism $\Hom_{\Lambda}(-, \res I) \cong \Hom_{\Gamma}(- \otimes_{\Lambda} \Gamma, I)$, and the latter has at most $d$ derived functors.

We can thus apply Propositions~\ref{prop nice proj resol} and \ref{prop syzygy-cosyzygy} in the first two lines below, and obtain the following isomorphisms$\colon$
\begin{align*}
\Hom_{\K(\Inj \Lambda)}( X, \lambda \res Y) & = \Hom_{\K(\Inj \Lambda)}( X, \widetilde\mho^d \res Y) \\
& =  \Hom_{\K(\Lambda)}( \widetilde\Omega^d X , \res Y) \\
& = \Hom_{\K(\Gamma)}( \widetilde\Omega^d X \otimes_{\Lambda} \Gamma , Y) \\
& = \Hom_{\K(\Inj \Gamma)}( \lambda ( \widetilde\Omega^d X \otimes_{\Lambda} \Gamma ), Y). 
\end{align*}
We infer that $\lambda ( \widetilde\Omega^d - \otimes_{\Lambda} \Gamma )$ is left adjoint to $\lambda \res$.
\end{proof}

\begin{proposition}
\label{propfivetupleadj}
Let $f\colon \Lambda \to \Gamma$ be a morphism of noetherian algebras with $d = \pdim {}_{\Lambda}\Gamma < \infty$. Then there is an adjoint quadruple as indicated by the solid arrows in the following diagram. If also $\pdim \Gamma_{\Lambda} < \infty $, then we even have a five-tuple of adjoint functors.
\[ \begin{tikzpicture}
 \node (KIA) at (-1,1) {$\K(\Inj \Lambda)$};
 \node (KIB) at (4,1) {$\K(\Inj \Gamma)$};
 \draw [->,bend left=60] (KIA) to node [above] {$\scriptstyle \lambda (\widetilde\Omega^d - \otimes_{\Lambda} \Gamma)$} (KIB);
 \draw [->,bend right] (KIB) to node [above] {$\scriptstyle \lambda \res$} (KIA);
 \draw [->] (KIA) to node [above] {$\scriptstyle \Hom_{\Lambda}(\Gamma, -)$} (KIB);
 \draw [->,bend left] (KIB) to (KIA);
 \draw [->,bend right=60,dashed] (KIA) to (KIB);
\end{tikzpicture} \]
\end{proposition}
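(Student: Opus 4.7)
The plan is to invoke Brown representability (Theorem~\ref{thm:neemanthreeadjoints}) on the compactly generated triangulated categories $\K(\Inj\Lambda)$ and $\K(\Inj\Gamma)$ --- compact generation being provided by Theorem~\ref{thm:compactcompletions}(i) --- and extend the adjoint triple of Proposition~\ref{prop adj for K(Inj)} one step at a time.

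For the fourth adjoint, a right adjoint of $\Hom_\Lambda(\Gamma,-)$, Theorem~\ref{thm:neemanthreeadjoints}(i) reduces us to checking that $\Hom_\Lambda(\Gamma,-)$ preserves coproducts. Since coproducts of injective modules over the noetherian ring $\Lambda$ remain injective, coproducts in $\K(\Inj\Lambda)$ are computed termwise. Moreover $\Gamma$ is finitely generated as a $\Lambda$-module --- the intended meaning of a morphism of noetherian algebras being that both are module finite over a common commutative noetherian base --- so $\Hom_\Lambda(\Gamma,-)$ commutes with arbitrary coproducts of modules and hence with coproducts in $\K(\Inj\Lambda)$. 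Brown representability then supplies the desired right adjoint $G_4\colon\K(\Inj\Gamma)\to\K(\Inj\Lambda)$.

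For the fifth adjoint, now also assuming $d'=\pdim\Gamma_\Lambda<\infty$, we seek a right adjoint $G_5$ of $G_4$. By Theorem~\ref{thm:neemanthreeadjoints}(i), $G_5$ exists precisely when $G_4$ preserves coproducts, and by Theorem~\ref{thm:neemanthreeadjoints}(iii) applied to the adjunction $\Hom_\Lambda(\Gamma,-)\dashv G_4$ the latter is equivalent to $\Hom_\Lambda(\Gamma,-)$ preserving compact objects. Invoking the identification $\K(\Inj\Lambda)^{\cc}\cong\Db(\modules\Lambda)$ from Theorem~\ref{thm:compactcompletions}(i), we must check $\RHom_\Lambda(\Gamma,M)\in\Db(\modules\Gamma)$ for all $M\in\Db(\modules\Lambda)$. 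Boundedness is immediate from the vanishing of $\Ext^i_\Lambda(\Gamma,-)$ for $i>d'$; the step I expect to require the most care is the finite generation of each $\Ext^i_\Lambda(\Gamma,M)$ as a right $\Gamma$-module. The right $\Gamma$-action arises functorially from the left $\Gamma$-action on $\Gamma$, and finite generation should follow by resolving $\Gamma$ as a $\Gamma$-$\Lambda$-bimodule through terms that are finitely generated projective as right $\Lambda$-modules, keeping the resolution simultaneously compatible with the bimodule structure and with $\pdim\Gamma_\Lambda<\infty$.
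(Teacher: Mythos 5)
Your proof is correct, and for the fifth adjoint it coincides with the paper's. On the fourth adjoint, however, you take a genuinely different route: you verify directly that $\Hom_\Lambda(\Gamma,-)$ preserves coproducts (using noetherianness of $\Lambda$, so that coproducts of injectives are termwise, together with the finite generation of $\Gamma$ over $\Lambda$), and then apply Theorem~\ref{thm:neemanthreeadjoints}(i). The paper instead observes that the \emph{left} adjoint $\lambda\res$ preserves compact objects --- since $\lambda\res(\lambda_\Gamma X)\cong\lambda_\Lambda(\res X)$ for $X\in\Db(\modules\Gamma)$ --- and then applies Theorem~\ref{thm:neemanthreeadjoints}(iii) followed by (i). Your version is more elementary and self-contained; the paper's is slightly slicker and runs exactly parallel to the argument it then gives for the fifth adjoint, so the two steps are formally uniform. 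Concerning the point you flag as needing care --- finite generation of $\Ext^i_\Lambda(\Gamma,M)$ as a right $\Gamma$-module --- there is a shortcut that avoids the bimodule-resolution bookkeeping you sketch: resolve $\Gamma_\Lambda$ by finitely generated projective $\Lambda$-modules, observe that each $\Hom_\Lambda(P_i,M)$ is a finitely generated module over the common commutative noetherian base $k$, hence so is each $\Ext^i_\Lambda(\Gamma,M)$, and a finitely generated $k$-module carrying a $\Gamma$-action is automatically finitely generated over the module-finite $k$-algebra $\Gamma$.
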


\begin{proof}
Consider first the functor $\Hom_{\Lambda}(\Gamma, -)$. We observe that its left adjoint $\lambda \res$ restricts to the respective subcategories of compacts. Therefore $\Hom_{\Lambda}(\Gamma, -)$ has a right adjoint by Theorem~\ref{thm:neemanthreeadjoints}.

Similarly, for the bottom solid functor, its left adjoint $\Hom_{\Lambda}(\Gamma, -)$ preserves compact objects if and only if $\Gamma_{\Lambda}$ has finite projective dimension. Thus, in this case we obtain the additional dashed adjoint.
\end{proof}

Unfortunately, the two new functors in the above proposition are not explicit. However, in the case that our rings are module finite over some commutative Gorenstein ring, we get a much more explicit picture.

\begin{proposition} \label{prop:commutativeoverkwithadjoints}
Let $f\colon \Lambda \to \Gamma$ be a morphism of module finite algebras over a commutative noetherian Gorenstein ring. Assume that $d = \pdim {}_{\Lambda}\Gamma < \infty$ and that $d' = \pdim \Gamma_{\Lambda} < \infty$.
Then we have the following diagram of adjoints, with a commutative square in the middle.
\[ \begin{tikzpicture}
 \node (KIA) at (0,1.5) {$\K(\Inj \Lambda)$};
 \node (KIB) at (5,1.5) {$\K(\Inj \Gamma)$};
 \node (KPA) at (0,0) {$\K(\Proj \Lambda)$};
 \node (KPB) at (5,0) {$\K(\Proj \Gamma)$};
 \draw [<-] (KIA) to node [left] {$\approx$} node [right] {$\scriptstyle - \otimes_{\Lambda} D_\Lambda$} (KPA);
 \draw [<-] (KIB) to node [left] {$\approx$} node [right] {$\scriptstyle - \otimes_{\Gamma} D_\Gamma$} (KPB);
 \draw [->,bend left=60] (KIA) to node [above] {$\scriptstyle \lambda(\widetilde\Omega^d - \otimes_{\Lambda} \Gamma)$} (KIB);
 \draw [->,bend right] (KIB) to node [above] {$\scriptstyle \lambda \res$} (KIA);
 \draw [->] (KIA) to node [above] {$\scriptstyle \Hom_{\Lambda}(\Gamma, -)$} (KIB);
 \draw [->] (KPA) to node [above] {$\scriptstyle - \otimes_{\Lambda} \Gamma$} (KPB);
 \draw [->,bend left] (KPB) to node [above] {$\scriptstyle \rho \res$} (KPA);
 \draw [->,bend right=60] (KPA) to node [above=.5mm] {$\scriptstyle \rho \Hom_{\Lambda}(\Gamma, \widetilde\mho^{d'} - )$} (KPB);
\end{tikzpicture} \]
\end{proposition}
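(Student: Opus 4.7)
The plan is to assemble the claim entirely from results already established earlier in the paper. Under the hypotheses, Example~\ref{ex over Gorenstein} supplies dualizing complexes $D_\Lambda = \Hom_k(\Lambda, i_k)$ and $D_\Gamma = \Hom_k(\Gamma, i_k)$ over the Gorenstein base ring $k$, and Theorem~\ref{thm:iyengarkrause} then produces the two vertical equivalences displayed in the diagram.

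Next, the two horizontal adjoint triples are given directly by Proposition~\ref{prop adj for K(Inj)}: the top triple needs only the finiteness of $d = \pdim{}_\Lambda\Gamma$, and the bottom triple needs only the finiteness of $d' = \pdim\Gamma_\Lambda$, both of which are part of the standing assumptions. In particular, no new work is required to identify the displayed functors $\lambda(\widetilde\Omega^d-\otimes_\Lambda\Gamma)$, $\lambda\res$, $\Hom_\Lambda(\Gamma,-)$ and, dually, $-\otimes_\Lambda\Gamma$, $\rho\res$, $\rho\Hom_\Lambda(\Gamma,\widetilde\mho^{d'}-)$ as adjoints.

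The one step requiring actual verification is the commutativity of the middle square, namely the natural isomorphism
\[
 \bigl(-\otimes_\Lambda\Gamma\bigr)\otimes_\Gamma D_\Gamma \;\cong\; \Hom_\Lambda\bigl(\Gamma,\,-\otimes_\Lambda D_\Lambda\bigr)
\]
of functors $\K(\Proj\Lambda) \to \K(\Inj\Gamma)$. This is exactly the content of Proposition~\ref{prop duality commutes}, whose hypothesis $\Hom_\Lambda(\Gamma,D_\Lambda)\cong D_\Gamma$ as $\Lambda$-$\Gamma$-bimodule complexes is itself verified, for our specific dualizing complexes, in the ``in particular'' clause of that same proposition, where the required identification $\Hom_\Lambda(\Gamma,\Hom_k(\Lambda,i_k))\cong \Hom_k(\Gamma,i_k)$ is a straightforward Hom-tensor adjunction over $k$. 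Hence this is the only real step, and it has already been discharged; the rest is purely a matter of packaging prior results, so I do not anticipate any genuine obstacle.
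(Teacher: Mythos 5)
Your proposal is correct and follows exactly the paper's own argument: the two horizontal adjoint triples come from Proposition~\ref{prop adj for K(Inj)}, the vertical equivalences from Theorem~\ref{thm:iyengarkrause} via the dualizing complexes of Example~\ref{ex over Gorenstein}, and the commutativity of the middle square is precisely Proposition~\ref{prop duality commutes}. The paper states this as a one-line combination of those two propositions; you have simply unpacked it.
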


\begin{proof}
This is a combination of Proposition~\ref{prop adj for K(Inj)} (for the adjunctions) and Proposition~\ref{prop duality commutes} (for the commutative square).
\end{proof}

\subsection*{Acyclic and coacyclic complexes}

We now consider the subcategories of acyclic and coacyclic complexes. Our strategy is to investigate which of the above established functors restrict to this level under which conditions.

\begin{proposition} \label{prop restriction to acyclic}
Let $f\colon \Lambda \to \Gamma$ be a morphism of rings.
Assume that $d = \pdim \Gamma_{\Lambda}<\infty$.
Then we have the following.
\begin{enumerate}[label=\emph{(\roman*)}]
\item The functors $\Hom_{\Lambda}( \Gamma, - )$ and $\lambda \res$ restrict to functors between homotopy categories of acyclic complexes of injectives.
\item The functor $- \otimes_{\Lambda} \Gamma$ restricts to a functor between homotopy categories of coacyclic complexes of projectives.
\item If additionally for any projective $\Lambda$-module $Q$ the complex $\RHom_{\Lambda}(\Gamma, Q)$ is quasi-isomorphic to a bounded complex of projective $\Gamma$-modules, then the functor $\rho \res$ restricts to a functor between categories of coacyclic complexes of projectives.

If $\Gamma$ is perfect as $\Lambda$-module, the condition is satisfied when $\RHom_{\Lambda}(\Gamma, \Lambda)$ lies in $\perf \Gamma$.
\end{enumerate}
\end{proposition}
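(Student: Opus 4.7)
The statement has three parts. My plan is to handle (i) and (ii) by the same double-complex argument, grounded in the finite one-sided projective dimension of $\Gamma$, and to prove (iii) using the third adjoint from Proposition~\ref{prop adj for K(Inj)}.

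For (i), I would fix a projective resolution $P_\bullet \to \Gamma_{\Lambda}$ of length $d$ and, given $Y \in \K_{\ac}(\Inj\Lambda)$, study the bicomplex $\Hom_\Lambda(P_\bullet, Y)$. Its columns are acyclic because each $P_i$ is projective and $Y$ is acyclic, while its rows collapse to $\Hom_\Lambda(\Gamma, Y^j)$ in degree $0$ because each $Y^j$ is injective. A standard spectral sequence comparison forces $\Hom_\Lambda(\Gamma, Y)$ to be acyclic; that its terms are injective $\Gamma$-modules was already noted in the proof of Proposition~\ref{prop adj for K(Inj)}. For $\lambda\res$, starting from $Y \in \K_{\ac}(\Inj \Gamma)$, the restriction $\res Y$ is plainly acyclic as a complex of $\Lambda$-modules, and $\lambda$ preserves acyclicity (any acyclic $X$ satisfies $\Hom_{\K}(\lambda X, J)=\Hom_\K(X,J)=0$ for every $J\in\K(\Inj\Lambda)$ in the noetherian setting, so $\lambda X$ is contractible). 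Hence $\lambda\res Y$ is an acyclic complex of injective $\Lambda$-modules. The argument for (ii) is essentially identical: $X \otimes_\Lambda \Gamma$ manifestly lies in $\K(\Proj\Gamma)$, and for $P' \in \Proj\Gamma$ the restriction $\res P'$ has projective dimension at most $d$ over $\Lambda$, so an analogous bicomplex built from a finite projective resolution of $\res P'$ shows that $\Hom_\K(X \otimes_\Lambda \Gamma, P') = \Hom_\K(X, \res P')$ vanishes.

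For (iii), the hypothesis $\pdim \Gamma_{\Lambda} = d < \infty$ already provides, via Proposition~\ref{prop adj for K(Inj)}, that $\rho\Hom_\Lambda(\Gamma, \widetilde\mho^d -)$ is right adjoint to $\rho\res$ on $\K(\Proj)$. For $Y \in \K_{\coac}(\Proj\Gamma)$ and $Q \in \Proj\Lambda$ this converts the desired vanishing into
\[
\Hom_{\K(\Proj\Lambda)}(\rho\res Y, Q) \cong \Hom_{\K(\Proj\Gamma)}(Y, \rho\Hom_\Lambda(\Gamma, \widetilde\mho^d Q)).
\]
The crux is then to identify the target inside $\K(\Proj\Gamma)$ with a bounded complex of projective $\Gamma$-modules; once that is done, coacyclicity of $Y$ finishes the job by dévissage, since bounded complexes of projective $\Gamma$-modules lie in $\thick(\Proj\Gamma)$ inside $\K(\Proj\Gamma)$. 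For the identification I would use that $\widetilde\mho^d Q \simeq Q$ in $\D(\Modules\Lambda)$ and that every term of $\widetilde\mho^d Q$ is either injective or the top-degree cosyzygy module $\mho^d Q$; in the latter case the dimension shift yields $\Ext^i_\Lambda(\Gamma, \mho^d Q) = \Ext^{i+d}_\Lambda(\Gamma, Q) = 0$ for $i > 0$. Hence $\Hom_\Lambda(\Gamma, \widetilde\mho^d Q) \simeq \RHom_\Lambda(\Gamma, Q)$ in $\D(\Modules\Gamma)$, and by hypothesis the latter admits a bounded representative in $\K(\Proj\Gamma)$; since any two bounded complexes of projectives representing the same derived class are isomorphic in $\K(\Proj\Gamma)$, this identifies $\rho\Hom_\Lambda(\Gamma, \widetilde\mho^d Q)$ with that representative.

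The main obstacle is precisely this last identification — pushing the match from $\D(\Modules\Gamma)$ down to $\K(\Proj\Gamma)$, since it is the homotopy category in which coacyclicity of $Y$ delivers the vanishing. For the concluding parenthetical, I would write any $Q \in \Proj\Lambda$ as a summand of some $\Lambda^{(I)}$ and use that $\Gamma \in \perf\Lambda$ makes $\RHom_\Lambda(\Gamma, -)$ commute with arbitrary coproducts: this yields $\RHom_\Lambda(\Gamma, \Lambda^{(I)}) \simeq \RHom_\Lambda(\Gamma, \Lambda)^{(I)}$, still a bounded complex of projective $\Gamma$-modules, and closure of such complexes under summands gives the conclusion.
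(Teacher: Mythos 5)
Your treatment of (i)--first part, (ii), and (iii) follows the paper's line of reasoning quite closely (the paper computes $\HH^i(\Hom_\Lambda(\Gamma,I))=\Ext^{i+j}_\Lambda(\Gamma,\B^{-j}(I))$ directly rather than via a bicomplex/spectral sequence, and for (iii) it likewise runs the adjunction $\Hom_{\K(\Proj\Lambda)}(\rho\res Y, Q)\cong\Hom_{\K(\Proj\Gamma)}(Y,\rho\Hom_\Lambda(\Gamma,\widetilde\mho^d Q))$ and identifies the right-hand argument as a bounded complex of projectives). These parts are fine.

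There is, however, a genuine error in your justification that $\lambda$ preserves acyclicity. You assert that for $X$ acyclic one has $\Hom_{\K}(X,J)=0$ for every $J\in\K(\Inj\Lambda)$, and deduce that $\lambda X$ is contractible. This is false: acyclicity of $X$ only forces $\Hom_\K(X,J)=0$ when $J$ is bounded below (or of finite injective dimension), not for arbitrary unbounded complexes of injectives. Indeed, if it were true, then $\K_{\ac}(\Inj\Lambda)$ would consist only of contractible complexes, which already fails for a nonsemisimple selfinjective algebra (take $J=X$ to be a non-contractible acyclic complex of injectives; then $\Hom_\K(X,X)\neq 0$). The correct argument, which the paper uses, relies on the explicit description of $\lambda$ from Proposition~\ref{prop nice proj resol}: for a complex $Z$ whose terms have injective dimension at most $d$, one has $\lambda Z=\mho^d(Z)[-d]$, and since $\mho$ is built from short exact sequences of complexes with contractible middle term, $\mho$ (and hence $\mho^d$) carries acyclic complexes to acyclic complexes. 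Applied to $Z=\res Y$ (whose terms have bounded injective dimension over $\Lambda$ because $\pdim{}_\Lambda\Gamma<\infty$, exactly as established in the proof of Proposition~\ref{prop adj for K(Inj)}), this gives that $\lambda\res Y$ is an acyclic complex of injective $\Lambda$-modules. You should replace the erroneous parenthetical by this reasoning.

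One further cosmetic point on (iii): your description of $\widetilde\mho^d Q$ as having ``terms which are either injective or the top-degree cosyzygy module'' blurs the complex-level construction of $\mho$ with module-level cosyzygies. The cleaner route, as in the paper, is simply to note that the iterated short exact sequences $0\to\mho^{j-1}\to I_{\mho^{j-1}}\to\mho^{j}\to 0$, upon applying $\Hom_\Lambda(\Gamma,-)$ and using $\Ext^{>d}_\Lambda(\Gamma,-)=0$, exhibit $\Hom_\Lambda(\Gamma,\widetilde\mho^d Q)$ as a bounded complex computing $\RHom_\Lambda(\Gamma,Q)$, after which $\rho$ returns its projective resolution, which by hypothesis is homotopy equivalent to a bounded complex of projective $\Gamma$-modules.
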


\begin{proof}
For the first part of (i) observe that if $I \in \K_{\ac}( \Inj \Lambda)$, then
\[
\HH^i( \Hom_{\Lambda}(\Gamma, I)) = \Ext_{\Lambda}^{i+j}(\Gamma, \B^{-j}(I)) = 0 \text{ for } j \gg 0
\]
since $\pd \Gamma_{\Lambda}<\infty$. For the second part of (i) note that $\res$ preserves acyclicity, and so does $\lambda$ by the construction in Proposition~\ref{prop nice proj resol}.

For (ii), let $P \in \K_{\coac}(\Proj \Lambda)$. For $Q \in \Proj \Gamma$ we have
\[ \Hom_{\K(\Gamma)}(P \otimes_{\Lambda} \Gamma, Q) = \Hom_{\K(\Lambda)}(P, Q), \]
which vanishes since $\Hom_{\K(\Lambda)}(P, \Proj\Lambda[i])=0$, and $Q$ has a finite projective resolution over $\Lambda$.

For (iii), we let $P\in\K_{\coac}(\Proj \Gamma)$ and look at $\Hom_{\K(\Lambda)}( \rho \res P, Q)$. By adjunction on the level of full homotopy categories of projectives we have
\[ \Hom_{\K(\Lambda)}( \rho \res P, Q) = \Hom_{\K(\Gamma)}(P, \rho \Hom_{\Lambda}(\Gamma, \widetilde\mho^d Q)). \]
Note that the second argument in the $\Hom_{\K(\Gamma)}$ is just a projective resolution of $\RHom_{\Lambda}(\Gamma, Q)$. As this is finite by assumption, the $\Hom$-space vanishes since $\Hom_{\K(\Gamma)}(P, -)$ vanishes on all projectives.

Finally, if $\Gamma$ is perfect as a $\Lambda$-module then $\RHom_{\Lambda}\left(\Gamma, \Lambda^{(I)}\right)\cong \RHom_\Lambda\left(\Gamma, \Lambda\right)^{(I)}$, whence it suffices to consider the projective $\Lambda$-module $\Lambda$.
\end{proof}

Observe that there is a certain asymmetry in the above, in the sense that $\lambda \res$ automatically restricts, while for $\rho \res$ we resort to the peculiar extra condition. We collect this as yet another argument for prefering injective to projective objects in this context.

`Big' versions of Proposition~\ref{prop:coneperfectimpliesfullyfaithful} and its consequences now come for free.

\begin{corollary} \label{cor:fullyfaithfulonKcoac}
Let $\Lambda$ and $\Gamma$ be module finite over some regular commutative noetherian ring $k$, and the ring homomorphism $f$ be $k$-linear. Assume that $\Cone(f) \in \perf \Lambdae$.
Then the functor \[-\otimes_\Lambda \Gamma\colon \K_{\coac}(\Proj \Lambda)\to \K_{\coac}(\Proj \Gamma)\] is fully faithful. In particular, if additionally $\RHom_{\Lambda}(\Gamma, \Lambda) \in \perf \Gamma$ then the pair \[\left(\K_{\coac}(\Proj \Lambda), \Ker(\rho\res) \right)\] is a stable $t$-structure in $ \K_{\coac}(\Proj \Gamma)$.
\end{corollary}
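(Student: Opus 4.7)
The plan is to reduce both claims to results already established in the paper, invoking Lemma~\ref{lem:compgenff} to upgrade fully faithfulness from compacts to the full category.

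First I would assemble the ingredients for Lemma~\ref{lem:compgenff}. The category $\K_{\coac}(\Proj \Lambda)$ is compactly generated: combining the Iyengar--Krause restriction $\K_{\coac}(\Proj \Lambda) \cong \K_{\ac}(\Inj \Lambda)$ from Lemma~\ref{lem:restrictstothick}(iii) with Theorem~\ref{thm:compactcompletions}(ii), its subcategory of compacts is identified with $\Dsg(\Lambda)$. The functor $- \otimes_\Lambda \Gamma$ does restrict to $\K_{\coac}(\Proj \Lambda) \to \K_{\coac}(\Proj \Gamma)$ by Proposition~\ref{prop restriction to acyclic}(ii), and it preserves coproducts since on the ambient $\K(\Proj -)$ it is a left adjoint. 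Under the identification of compacts with singularity categories, the induced functor on compacts is $- \otimes^{\mathbb{L}}_\Lambda \Gamma \colon \Dsg(\Lambda) \to \Dsg(\Gamma)$, which is fully faithful by Proposition~\ref{prop:coneperfectimpliesfullyfaithful}(i). Lemma~\ref{lem:compgenff} now delivers the first claim.

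For the second claim, the additional hypothesis $\RHom_\Lambda(\Gamma, \Lambda) \in \perf \Gamma$ is exactly what Proposition~\ref{prop restriction to acyclic}(iii) needs in order to restrict $\rho \res$ to $\K_{\coac}(\Proj \Gamma) \to \K_{\coac}(\Proj \Lambda)$. The adjoint pair $(- \otimes_\Lambda \Gamma,\, \rho \res)$ on $\K(\Proj -)$ thus descends to this level. Since the left adjoint is now known to be fully faithful, a standard formal argument produces the stable $t$-structure: the essential image $\K_{\coac}(\Proj \Lambda)$ is the aisle, $\Ker(\rho \res)$ is the coaisle, orthogonality is immediate from the adjunction, and for $Z \in \K_{\coac}(\Proj \Gamma)$ the counit triangle $\rho\res(Z) \otimes_\Lambda \Gamma \to Z \to V \to$ has $V \in \Ker(\rho\res)$ because fully faithfulness of the left adjoint forces $\rho\res$ applied to the counit to be an isomorphism; closure under shifts is automatic.

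The one delicate point in this plan is the identification of the functor induced on compacts with $- \otimes^{\mathbb{L}}_\Lambda \Gamma$. This amounts to verifying that the embedding $\Dsg(\Lambda) \hookrightarrow \K_{\coac}(\Proj \Lambda)^{\cc}$ --- built using projective resolutions and coacyclization --- is compatible with tensoring by $\Gamma$, which is a naturality statement built into the framework of Section~\ref{sec:bigcat} in which Diagram~\ref{bigpicture} expresses Theorem~I as the restriction to compacts of the ``big'' picture. Once this compatibility is in hand, everything else is a clean application of the results cited.
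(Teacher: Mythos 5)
Your proposal is correct and follows essentially the same route as the paper: restrict $-\otimes_\Lambda\Gamma$ to coacyclics via Proposition~\ref{prop restriction to acyclic}(ii), note that $-\otimes^{\mathbb L}_\Lambda\Gamma$ is fully faithful on singularity categories by Proposition~\ref{prop:coneperfectimpliesfullyfaithful}(i), and upgrade via Lemma~\ref{lem:compgenff} using the identification $\K_{\coac}(\Proj\Lambda)^{\cc}\cong\Dsg(\Lambda)$. The paper's written proof only addresses the fully-faithfulness claim and leaves the stable $t$-structure implicit; your spelled-out counit-triangle argument for the $t$-structure and your flagging of the compatibility-on-compacts as the one point needing care are both accurate additions.
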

\begin{proof}
Under these assumptions, $-\otimes_\Lambda \Gamma$ does give a functor between the categories in question by Proposition~\ref{prop restriction to acyclic}. Moreover, $-\otimes^{\mathbb L}_\Lambda \Gamma$ is fully faithful on the level of singularity categories by Proposition~\ref{prop:coneperfectimpliesfullyfaithful}. By Lemma~\ref{lem:compgenff} this suffices since the singularity category embeds as the compact objects in the homotopy category of coacyclic complexes of projective modules.
\end{proof}

\subsection*{Totally acyclic complexes}
We now descend further to the homotopy categories of totally acyclic complexes. As before, we investigate which of the above functors exist on this level under which conditions.
\begin{proposition} \label{prop:restrictiontotac}
Let $f\colon \Lambda \to \Gamma$ be a morphism of rings. Assume that $d = \pdim {}_{\Lambda}\Gamma < \infty$ and that $d' = \pdim \Gamma_{\Lambda} < \infty$
Then we have the following.
\begin{enumerate}[label=\emph{(\roman*)}]
\item The functor $\Hom_{\Lambda}(\Gamma,-)$ restricts to a functor between homotopy categories of totally acyclic complexes of injectives.
\item The functor $- \otimes_{\Lambda} \Gamma$ restricts to a functor between homotopy categories of totally acyclic complexes of projectives.
\item If additionally $\RHom_{\Lambda}(\Gamma, Q) \in \K^{\bounded}( \Proj \Gamma)$ for any $Q \in \Proj \Lambda$, then the functor $\rho \res$ restricts to a functor between categories of totally acyclic complexes of projectives.

If $\Gamma$ is perfect as $\Lambda$-module, the condition is satisfied when $\RHom_{\Lambda}(\Gamma, \Lambda)$ lies in $\perf \Gamma$.
\item If additionally $J \otimes_{\Lambda}^{\mathbb{L}} \Gamma \in \K^{\bounded}(\Inj \Gamma)$ for any $J \in \Inj \Lambda$, then the functor $\lambda \res$ restricts to a functor between categories of totally acyclic complexes of injectives.
\end{enumerate}
\end{proposition}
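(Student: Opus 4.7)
Since $\Ktac = \K_{\ac}\cap\K_{\coac}$, each of the four parts asks for preservation of two conditions, and Proposition~\ref{prop restriction to acyclic} already delivers one of them in every case: its part~(i) gives acyclicity in our (i) and (iv), while its parts~(ii) and (iii) give coacyclicity in our (ii) and (iii). The plan is therefore to establish the complementary statement in each case.

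The two outstanding acyclicity claims are bookkeeping. In (ii), the dimension-shift argument used in Section~\ref{section:singularitycategories} to show that Gorenstein projectivity is preserved by $-\otimes_{\Lambda}\Gamma$ yields $\HH^i(P\otimes_{\Lambda}\Gamma) = \Tor^{\Lambda}_{j-i}(\B^{j+1}(P),\Gamma)$ for any $j>i$, which vanishes as soon as $j-i>d$. In (iii), $\res$ is exact, and the short exact sequence $0\to\Omega Y\to P_Y\to Y\to 0$ has contractible (hence exact) middle term, so the long exact sequence in cohomology shows that $\Omega$ preserves acyclicity; iterating yields acyclicity of $\rho\res P = \Omega^{d'}\res P[d']$.

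For the coacyclicity claim in (i), let $X\in\Ktac(\Inj\Lambda)$ and $J\in\Inj\Gamma$ be an injective stalk. The adjunction $\lambda\res\dashv\Hom_{\Lambda}(\Gamma,-)$ from Proposition~\ref{prop adj for K(Inj)} rewrites
\[
\Hom_{\K(\Inj\Gamma)}(J[n],\Hom_{\Lambda}(\Gamma,X)) \cong \Hom_{\K(\Inj\Lambda)}(\lambda\res J[n],X).
\]
As observed in the proof of Proposition~\ref{prop adj for K(Inj)}, $\res J$ has injective dimension at most $d$ over $\Lambda$, so Proposition~\ref{prop nice proj resol} identifies $\lambda\res J$ with the bounded injective complex $\mho^{d}\res J[-d]$. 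A triangle d\'evissage along this bounded complex then reduces the required vanishing to the defining property $X\in(\Inj\Lambda)^{\perp}$.

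Part (iv) is the technical heart, and this is where I expect the main obstacle to lie. The other adjunction of Proposition~\ref{prop adj for K(Inj)}, $\lambda(\widetilde\Omega^{d}(-)\otimes_{\Lambda}\Gamma)\dashv\lambda\res$, gives
\[
\Hom_{\K(\Inj\Lambda)}(J[n],\lambda\res X)\cong \Hom_{\K(\Inj\Gamma)}(\lambda(\widetilde\Omega^{d}J\otimes_{\Lambda}\Gamma)[n],X),
\]
so by the same d\'evissage as in (i) it suffices to show $\lambda(\widetilde\Omega^{d}J\otimes_{\Lambda}\Gamma)\in\K^{\bounded}(\Inj\Gamma)$. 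Now $\widetilde\Omega^{d}J = \Omega^{d}J[d]$ sits in degrees $[-d,0]$ with projective terms apart from the leftmost tail $\Omega_{d}J$, and $\pdim {}_{\Lambda}\Gamma = d$ forces $\Tor^{\Lambda}_{>0}(\Omega_{d}J,\Gamma)=0$, so every term of $\widetilde\Omega^{d}J$ is Tor-acyclic for $\Gamma$. Hence $\widetilde\Omega^{d}J\otimes_{\Lambda}\Gamma$ is a bounded complex that represents $J\otimes_{\Lambda}^{\mathbb L}\Gamma$ in $\D(\Gamma)$, and the extra hypothesis places it in the isomorphism class of a bounded complex of injective $\Gamma$-modules. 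The delicate point --- the principal obstacle --- is to identify $\lambda$ of this bounded complex, which is only quasi-isomorphic, not $\K$-isomorphic, to a bounded injective complex, with that bounded injective representative in $\K(\Inj\Gamma)$. The expected route is to invoke that bounded-below complexes of injectives are K-injective, so that both $\lambda(\widetilde\Omega^{d}J\otimes_{\Lambda}\Gamma)$ and the bounded injective representative are K-injective resolutions of the same object of $\D(\Gamma)$, which then forces them to be isomorphic in $\K(\Inj\Gamma)$.
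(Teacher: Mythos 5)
Your proof follows essentially the same route as the paper: the paper writes out (iv) in full and declares (i)--(iii) ``very similar'' to Proposition~\ref{prop restriction to acyclic}, while you make all four explicit (correctly: the dimension-shift $\HH^i(P\otimes_\Lambda\Gamma)=\Tor^\Lambda_{j-i}(\B^{j+1}P,\Gamma)$ for (ii), exactness of $\res$ plus Proposition~\ref{prop nice proj resol} for (iii), and the adjunction-plus-d\'evissage for (i)). You are also right to flag the identification $\lambda(\widetilde\Omega^d J\otimes_\Lambda\Gamma)\in\K^{\bounded}(\Inj\Gamma)$ as the delicate point of (iv), which the paper's proof takes for granted: the terms of $\widetilde\Omega^d J\otimes_\Lambda\Gamma$ need not have finite injective dimension over $\Gamma$, so Proposition~\ref{prop nice proj resol} does not apply directly.

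Your suggested finish, however, has a small circularity: you assert that $\lambda(\widetilde\Omega^d J\otimes_\Lambda\Gamma)$ is a K-injective resolution of $J\otimes_\Lambda^{\mathbb L}\Gamma$, but there is no a priori reason for the value of the left adjoint $\lambda$ on an arbitrary complex to be K-injective --- indeed that is close to what you want to conclude. The clean way to close the gap is direct. Write $X=\widetilde\Omega^d J\otimes_\Lambda\Gamma$ (a bounded complex) and let $Y$ be the bounded complex of injective $\Gamma$-modules to which $X$ is quasi-isomorphic. Since $Y$ is bounded below and degreewise injective, it is K-injective, so the isomorphism $X\to Y$ in $\D(\Gamma)$ is represented by an honest chain map, whose cone $C$ is bounded and acyclic. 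For any $I\in\K(\Inj\Gamma)$ one then has $\Hom_{\K(\Gamma)}(C,I)=0$: a null-homotopy of any chain map $C\to I$ is built inductively from the left, using at each stage that $C$ is exact and the relevant term of $I$ is injective. Consequently $\Hom_{\K(\Gamma)}(Y,I)\cong\Hom_{\K(\Gamma)}(X,I)$ for all $I\in\K(\Inj\Gamma)$, so $Y$ satisfies the universal property defining $\lambda X$, giving $\lambda X\cong Y$ in $\K(\Inj\Gamma)$. With this replacement your argument is complete and matches the paper's intent.
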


\begin{proof}
The first three points can be proved very similarly to Proposition~\ref{prop restriction to acyclic} above. We focus on (iv). Also as above, one sees that for a totally acyclic complex of injectives $I$, $\lambda \res I$ is acyclic again. For it to be totally acyclic, it remains to check that $\Hom_{\K(\Gamma)}(J, \lambda \res I) = 0$ for all injective $\Lambda$-modules $J$. By adjunction we have
\[ \Hom_{\K(\Lambda)}(J, \lambda \res I) = \Hom_{\K(\Gamma)}(\lambda( \widetilde\Omega^d J \otimes_{\Lambda} \Gamma ), I). \]
We observe that the left argument of the final $\Hom$-above is an injective resolution of $J \otimes_{\Lambda}^{\mathbb{L}} \Gamma$, so it is a bounded complex of injectives by assumption. Thus the total acyclicity of $I$ implies that these $\Hom$-groups are zero.
\end{proof}

By restriction of Corollary~\ref{cor:fullyfaithfulonKcoac} we immediately get the following. Notice that it is in fact not necessary to invoke Proposition~\ref{prop:coneperfectimpliesfullyfaithful} and Lemma~\ref{lem:compgenff} directly by assuming virtual Gorensteinness.

\begin{corollary}
\label{corsecondff}
Let $\Lambda$ and $\Gamma$ be module finite over some regular commutative noetherian ring $k$, and the ring homomorphism $f$ be $k$-linear. Assume that $\Cone(f) \in \perf \Lambdae$.
Then there is a fully faithful functor 
\[-\otimes_\Lambda \Gamma\colon \Ktac(\Proj \Lambda)\to \Ktac(\Proj \Gamma).\]
In particular, if additionally $\RHom_{\Lambda}(\Gamma, \Lambda) \in \perf \Gamma$ then the pair 
\[ \left(\Ktac(\Proj \Lambda), \Ker(\rho\res) \right)\]
is a stable $t$-structure in $ \Ktac(\Proj \Gamma)$.
\end{corollary}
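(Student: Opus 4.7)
The setup deliberately parallels Corollary~\ref{cor:fullyfaithfulonKcoac}, so the natural approach is to inherit everything from that result by restriction, with the necessary restriction statements provided by Proposition~\ref{prop:restrictiontotac}. The plan is therefore to (1) verify the relevant functors restrict to totally acyclic complexes, (2) obtain fully faithfulness for free as a subfunctor of the fully faithful functor on $\K_{\coac}$, and (3) transfer the stable $t$-structure triangle decomposition from $\K_{\coac}$ into $\Ktac$ by checking that all three terms stay totally acyclic.

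First I would note that the hypothesis $\Cone(f)\in\perf\Lambdae$ implies, via the triangle $\Lambda\to\Gamma\to\Cone(f)\to\Lambda[1]$, that $\Gamma$ has finite projective dimension as both left and right $\Lambda$-module, so we are in the setting where Proposition~\ref{prop:restrictiontotac} applies. Part~(ii) of that proposition gives the desired functor $-\otimes_\Lambda\Gamma\colon\Ktac(\Proj\Lambda)\to\Ktac(\Proj\Gamma)$. Fully faithfulness is then immediate, since by Corollary~\ref{cor:fullyfaithfulonKcoac} the same functor is already fully faithful on the ambient categories $\K_{\coac}(\Proj\Lambda)\to\K_{\coac}(\Proj\Gamma)$, and $\Ktac(\Proj\Lambda)$ is a full subcategory of $\K_{\coac}(\Proj\Lambda)$.

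For the stable $t$-structure, assume additionally $\RHom_\Lambda(\Gamma,\Lambda)\in\perf\Gamma$. Then Proposition~\ref{prop:restrictiontotac}(iii) ensures that $\rho\res$ restricts to $\Ktac(\Proj\Gamma)\to\Ktac(\Proj\Lambda)$, and the adjunction $(-\otimes_\Lambda\Gamma,\rho\res)$ from Corollary~\ref{cor:fullyfaithfulonKcoac} restricts accordingly. Given any $X\in\Ktac(\Proj\Gamma)\subseteq\K_{\coac}(\Proj\Gamma)$, the stable $t$-structure on $\K_{\coac}(\Proj\Gamma)$ supplies the counit triangle
\[ \rho\res(X)\otimes_\Lambda\Gamma\xrightarrow{\varepsilon_X} X\to \Cone(\varepsilon_X)\to \rho\res(X)\otimes_\Lambda\Gamma[1] \]
with $\Cone(\varepsilon_X)\in\Ker(\rho\res)$. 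The key point is that the left-hand term is already totally acyclic: $\rho\res(X)\in\Ktac(\Proj\Lambda)$ by part~(iii) of Proposition~\ref{prop:restrictiontotac}, and then $-\otimes_\Lambda\Gamma$ of it is in $\Ktac(\Proj\Gamma)$ by part~(ii). Since $\Ktac(\Proj\Gamma)$ is triangulated, the third term $\Cone(\varepsilon_X)$ lies there as well, establishing the triangle inside $\Ktac(\Proj\Gamma)$ with the required orthogonality. This gives the claimed stable $t$-structure.

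The only nontrivial ingredient is the verification in step (3) that the stable $t$-structure triangle decomposition really descends to $\Ktac$; this is where the perfectness of $\RHom_\Lambda(\Gamma,\Lambda)$ is essential, since without it $\rho\res$ need not preserve total acyclicity and the left term of the counit triangle could escape $\Ktac(\Proj\Gamma)$. Everything else is formal transfer from the $\K_{\coac}$ picture already proved.
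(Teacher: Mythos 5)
Your proof is correct and follows essentially the same route as the paper's: restrict the fully faithful functor and stable $t$-structure from $\K_{\coac}$ (Corollary~\ref{cor:fullyfaithfulonKcoac}) down to $\Ktac$ using the restriction statements of Proposition~\ref{prop:restrictiontotac}, with fully faithfulness inherited for free from the ambient full subcategory. The paper states this tersely as ``by restriction,'' whereas you have usefully spelled out the details of descending the decomposition triangle, and you correctly locate the role of $\RHom_\Lambda(\Gamma,\Lambda)\in\perf\Gamma$ as the hypothesis making $\rho\res$ preserve total acyclicity.
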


\subsection*{Summary} Restricted to the case where $f\colon \Lambda\to\Gamma$ is a morphism of module finite algebras over some commutative noetherian Gorenstein ring such that $d=\maxx\{ \pdim {}_\Lambda \Gamma, \pdim\Gamma_\Lambda\}<\infty$, the results of the current section can be brought together in Diagram~\ref{bigpicture} below. A few comments are in order.

The top level is just a restatement of Proposition~\ref{prop:commutativeoverkwithadjoints}.

Evidently, the two lower levels are comprised of subcategories of the top level, so we have the vertical inclusions. Moreover, by Observation~\ref{obs:leftandrightadjoints} these inclusions have both left and right adjoints, as indicated by the downward pointing arrows labeled $\LL$ and $\RR$, respectively. It should be noted that these functors are given explicitly in restricted setups: The left adjoints are constructed in Bousfield's localization lemma --- see Corollary~\ref{rem:explicitL}. For the right adjoints, we give an explicit description in the case that the two rings are Artin algebras in Corollary~\ref{prop:explicitRforArtin}.

Most of the arrows, i.e.\ those not involving any $\RR$ or $\LL$, in the two lower levels of Diagram~\ref{bigpicture} arise by observing that the corresponding arrows on the top level restrict to these subcategories. In Propositions~\ref{prop restriction to acyclic} (middle level) and \ref{prop:restrictiontotac} (lower level) we collected which functors automatically restrict. The same propositions also contain information on more functors restricting under certain additional hypothesis, which gives a stronger version of Diagram~\ref{bigpicture} in a restricted setup.

Finally, on each of the two lower levels, we obtain one `extra adjoint' by composing adjoints around the square above the functor in question. For instance, the functor $\lambda \res \colon \K_{\ac}(\Inj \Gamma) \to \K_{\ac}(\Inj \Lambda)$ may be written as the composition
\[ \K_{\ac}(\Inj \Gamma) \hookrightarrow \K(\Inj \Gamma) \xrightarrow{\lambda \res} \K(\Inj \Lambda) \xrightarrow{\RR} \K_{\ac}(\Inj \Lambda), \]
whence it has a left adjoint given by the composition
\[ \K_{\ac}(\Inj \Lambda) \hookrightarrow \K(\Inj \Lambda) \xrightarrow{\lambda(\widetilde\Omega^d - \otimes_{\Lambda} \Gamma)} \K(\Inj \Gamma) \xrightarrow{\LL} \K_{\ac}(\Inj \Gamma). \]
Similarly we obtain all the extra adjoints involving $\LL$ or $\RR$.

\begin{equation}
\label{bigpicture}
\begin{tikzpicture}[baseline=-4.6cm]
 \node (KIA) at (-1,1) {$\K(\Inj \Lambda)$};
 \node (KIB) at (4,1) {$\K(\Inj \Gamma)$};
 \node (KPA) at (0,0) {$\K(\Proj \Lambda)$};
 \node (KPB) at (5,0) {$\K(\Proj \Gamma)$};
 \node (KcIA) at (-1,-4) {$\K_{\ac}(\Inj \Lambda)$};
 \node (KcIB) at (4,-4) {$\K_{\ac}(\Inj \Gamma)$};
 \node (KcPA) at (0,-5) {$\K_{\coac}(\Proj \Lambda)$};
 \node (KcPB) at (5,-5) {$\K_{\coac}(\Proj \Gamma)$};
 \node (KtIA) at (-1,-9) {$\Ktac(\Inj \Lambda)$};
 \node (KtIB) at (4,-9) {$\Ktac(\Inj \Gamma)$};
 \node (KtPA) at (0,-10) {$\Ktac(\Proj \Lambda)$};
 \node (KtPB) at (5,-10) {$\Ktac(\Proj \Gamma)$};
 \draw [->] (KIA) to node (KA) [fill=white,inner sep=0mm,outer sep=1.5cm] {$\approx$} (KPA);
 \draw [->] (KIB) to node (KB) [fill=white,inner sep=0mm,outer sep=1.5cm] {$\approx$} (KPB);
 \draw [->] (KcIA) to node (KcA) [fill=white,inner sep=0mm,outer sep=1.5cm] {$\approx$} (KcPA);
 \draw [->] (KcIB) to node (KcB) [fill=white,inner sep=0mm,outer sep=1.5cm] {$\approx$} (KcPB);
 \draw [->] (KtIA) to node (KtA) [fill=white,inner sep=0mm,outer sep=1.5cm] {$\approx$} (KtPA);
 \draw [->] (KtIB) to node (KtB) [fill=white,inner sep=0mm,outer sep=1.5cm] {$\approx$} (KtPB);
 \draw [->,bend left=60] (KIA) to node [above] {$\scriptstyle \lambda(\widetilde\Omega^d - \otimes_{\Lambda} \Gamma)$} (KIB);
 \draw [->,bend right] (KIB) to node [above] {$\scriptstyle \lambda \res$} (KIA);
 \draw [->] (KIA) to node [above] {$\scriptstyle \Hom_{\Lambda}(\Gamma, -)$} (KIB);
 \draw [->] (KPA) to node [above] {$\scriptstyle - \otimes_{\Lambda} \Gamma$} (KPB);
 \draw [->,bend left] (KPB) to node [above] {$\scriptstyle \rho \res$} (KPA);
 \draw [->,bend right=60] (KPA) to node [above=.5mm] {$\scriptstyle \rho \Hom_{\Lambda}(\Gamma, \widetilde\mho^d - )$} (KPB);
 \draw [->,bend left=60] (KcIA) to node [above] {$\scriptstyle \LL \lambda (\widetilde\Omega^d - \otimes_{\Lambda} \Gamma)$} (KcIB);
 \draw [->,bend right] (KcIB) to node [above] {$\scriptstyle \lambda \res$} (KcIA);
 \draw [->] (KcIA) to node [above] {$\scriptstyle \Hom_{\Lambda}(\Gamma, -)$} (KcIB);
 \draw [->] (KcPA) to node [above] {$\scriptstyle - \otimes_{\Lambda} \Gamma$} (KcPB);
 \draw [->,bend left] (KcPB) to node [above] {$\scriptstyle \RR \rho \res$} (KcPA);
 \draw [->,bend right] (KtIB) to node [above] {$\scriptstyle \LL \lambda \res$} (KtIA);
 \draw [->] (KtIA) to node [above] {$\scriptstyle \Hom_{\Lambda}(\Gamma, -)$} (KtIB);
 \draw [->] (KtPA) to node [above] {$\scriptstyle - \otimes_{\Lambda} \Gamma$} (KtPB);
 \draw [->,bend left] (KtPB) to node [above] {$\scriptstyle \RR \RR \rho \res$} (KtPA);
 \draw [->, bend right=10] (KA) to node [left] {$\scriptstyle \LL$} (KcA);
 \draw [right hook->] (KcA) to (KA);
 \draw [->, bend left=10] (KA) to node [right] {$\scriptstyle \RR$} (KcA);
 \draw [->, bend right=10] (KB) to node [left] {$\scriptstyle \LL$} (KcB);
 \draw [right hook->] (KcB) to (KB);
 \draw [->, bend left=10] (KB) to node [right] {$\scriptstyle \RR$} (KcB);
 \draw [->, bend right=10] (KcA) to node [left] {$\scriptstyle \LL$} (KtA);
 \draw [right hook->] (KtA) to (KcA);
 \draw [->, bend left=10] (KcA) to node [right] {$\scriptstyle \RR$} (KtA);
 \draw [->, bend right=10] (KcB) to node [left] {$\scriptstyle \LL$} (KtB);
 \draw [right hook->] (KtB) to (KcB);
 \draw [->, bend left=10] (KcB) to node [right] {$\scriptstyle \RR$} (KtB);
\end{tikzpicture}
\end{equation}

\section{Approximations of modules} \label{section approximationsofmodules}

In \cite{MR1859047} and the subsequent \cite{MR2283103}, Jørgensen plays the following game in order to obtain right approximations by Gorenstein projective modules. Under rather forgiving hypotheses on $\Lambda$, he establishes the contravariant finiteness of $\Ktac(\Proj \Lambda)$ in $\K(\Proj \Lambda)$ before passing to $\Modules \Lambda$ by taking boundaries, a procedure which turns out to take a right $\Ktac(\Proj \Lambda)$-approximation of a projective resolution $\rho M$ of $M$ to a right $\GProj \Lambda$-approximation of $M$. We will now follow these ideas in two directions: We first observe that over an arbitrary ring $R$, any contravariantly finite subcategory of $\K(\Proj R)$ gives rise to a contravariantly finite subcategory of $\Modules R$ by taking cokernels. Next, an ever so slight expansion of the above Theorem~\ref{thm:jorgensenGProjcontfinite} is obtained.

\begin{lemma} \label{lem:preimageofcontfiniteiscontfinite}
Let $\A$ be a Frobenius exact category. If $\underline \B$ is a contravariantly finite subcategory in $\underline\A$, then its preimage $\B$ is contravariantly finite in $\A$.
\end{lemma}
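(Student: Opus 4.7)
The plan is to produce a right $\B$-approximation of an arbitrary $A \in \A$ by combining a lift of a right $\underline{\B}$-approximation of $\underline{A}$ with a deflation from a projective-injective object, using the latter to absorb the slack coming from the ambiguity in lifting morphisms.

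First I would note a preliminary observation: every projective-injective object of $\A$ becomes the zero object in $\underline{\A}$, hence trivially lies in $\underline{\B}$, so such objects are automatically members of $\B$. In particular $\B$ is closed under finite direct sums with projective-injective objects.

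Next, given $A \in \A$, I would fix a right $\underline{\B}$-approximation $\underline{\beta}\colon \underline{B} \to \underline{A}$ and lift it to a morphism $\beta \colon B \to A$ in $\A$ with $B \in \B$. Since $\A$ is Frobenius, it has enough projectives, so I can fix a deflation $\pi \colon P \twoheadrightarrow A$ with $P$ projective-injective. The candidate approximation is then
\[ \bigl(\beta,\,\pi\bigr) \colon B \oplus P \longrightarrow A, \]
and $B \oplus P \in \B$ by the preliminary observation.

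To verify this is a right $\B$-approximation, take any $g \colon X \to A$ with $X \in \B$. In $\underline{\A}$, the morphism $\underline{g}$ factors through $\underline{\beta}$ via some $\underline{h}\colon\underline{X}\to\underline{B}$, which I lift to $h \colon X \to B$ in $\A$. The difference $g - \beta\circ h$ vanishes in $\underline{\A}$, so it factors through some projective-injective object $Q$, say as $g - \beta\circ h = q\circ k$ with $k\colon X \to Q$ and $q\colon Q \to A$. Here is the step where the deflation $\pi$ does the real work: since $Q$ is projective and $\pi$ is a deflation, $q$ lifts to some $\ell \colon Q \to P$ with $\pi\circ\ell = q$, giving $g = \beta\circ h + \pi\circ\ell\circ k$, i.e.\ $g$ factors through $(\beta,\pi)$. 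The only substantive obstacle is this absorbing argument; the rest is bookkeeping about stable versus honest morphisms, and uses nothing beyond the defining properties of a Frobenius exact category.
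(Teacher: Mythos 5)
Your proof is correct and takes essentially the same approach as the paper: the paper constructs the same candidate approximation $B \oplus P_X \to X$, where $P_X \to X$ is a deflation from a projective and $B \to X$ lifts a stable approximation, and leaves the verification to the reader. Your write-up is slightly cleaner in one respect: the paper phrases $p$ as a projective cover, whereas you correctly observe that any deflation from a projective-injective object suffices (and always exists, since $\A$ is Frobenius), so the argument does not depend on the existence of projective covers.
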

\begin{proof}
Each $X \in \A$ admits a right $\B$-approximation
\[
B \oplus P_X \xrightarrow{\big(\begin{smallmatrix} \pi & p \end{smallmatrix}\big)} X
\]
where $p$ is a projective cover and $\pi$ is any lift of a right $\underline\B$-approximation.
\end{proof}

\begin{proposition} \label{prop:contfiniteboundaries}
Let $R$ be a ring, and suppose $\SSS$ is a contravariantly finite subcategory of $\K(\Proj R)$. Denote by $\T$ the preimage of $\SSS$ in $\C(\Proj R)$ and by $\Cok(\T)$ the subcategory of $\Modules R$ consisting of all the cokernels of the form $\Cok(T^{-1}\to T^0)$ for $T$ a complex belonging to $\T$. Then $\Cok(\T)$ is contravariantly finite in $\Modules R$.
\end{proposition}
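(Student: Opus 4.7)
The strategy is to transfer the contravariant finiteness from $\K(\Proj R)$ up to the Frobenius exact category $\C(\Proj R)$ itself, where the cokernel functor $X\mapsto\Cok(X^{-1}\to X^0)$ is a genuine functor on chain maps. First I would invoke Lemma~\ref{lem:preimageofcontfiniteiscontfinite} with $\A=\C(\Proj R)$ (the projective-injective objects being the contractible complexes of projectives, so that $\underline{\A}=\K(\Proj R)$) to conclude that $\T$ is contravariantly finite in $\C(\Proj R)$.

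Now for $M\in\Modules R$, fix a projective resolution $\rho M\in\C(\Proj R)$ (concentrated in non-positive degrees, with $\Cok(\rho M^{-1}\to\rho M^0)=M$), and pick a right $\T$-approximation $\phi\colon T_0\to\rho M$ in $\C(\Proj R)$. Setting $C_0:=\Cok(T_0^{-1}\to T_0^0)$ and letting $f\colon C_0\to M$ be the morphism induced by $\phi$ at positions $(-1,0)$, I claim $f$ is a right $\Cok(\T)$-approximation of $M$. To verify this, consider an arbitrary $g\colon C'\to M$ with $C'=\Cok(T'^{-1}\to T'^0)$ and $T'\in\T$. A chain map $\psi\colon T'\to\rho M$ with $\bar\psi=g$ can be built by the standard inductive procedure: take $\psi^0\colon T'^0\to\rho M^0$ to be a lift (using projectivity of $T'^0$) of $T'^0\to C'\xrightarrow{g}M$ through the surjection $\rho M^0\to M$; for $i<0$ construct the $\psi^i$ successively using projectivity of the $T'^i$ and exactness of $\rho M$ below degree zero; and put $\psi^i=0$ for $i>0$, which is consistent because $\rho M^i=0$ there.

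Applying the right $\T$-approximation property to $\psi$ yields a chain map $\alpha\colon T'\to T_0$ in $\C(\Proj R)$ satisfying the genuine equality $\phi\alpha=\psi$. Since $\Cok$ is a functor on $\C(\Proj R)$, applying it at positions $(-1,0)$ delivers $f\bar\alpha=\overline{\phi\alpha}=\bar\psi=g$, which is the required factorization. The main subtlety to be wary of is that $\Cok$ is not homotopy invariant and hence is not well-defined on $\K(\Proj R)$; this is precisely what makes Lemma~\ref{lem:preimageofcontfiniteiscontfinite} indispensable, as it upgrades contravariant finiteness to the chain-map level, where equality — rather than mere homotopy — is available.
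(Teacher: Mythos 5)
Your proof is correct and follows essentially the same route as the paper's: you invoke Lemma~\ref{lem:preimageofcontfiniteiscontfinite} to promote contravariant finiteness from $\K(\Proj R)$ to $\C(\Proj R)$, lift a given morphism $g\colon C'\to M$ to a chain map $T'\to\rho M$ by the classical comparison argument, factor it through the right $\T$-approximation of $\rho M$ at the chain-map level, and then apply $\Cok$. Your closing remark about $\Cok$ failing to be homotopy invariant is precisely the point that makes the detour through $\C(\Proj R)$ necessary, and it matches the paper's argument exactly.
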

\begin{proof}
For each $X\in\C(\Proj R)$ we denote by $\Cok(X)=\Cok(X^{-1}\to X^0)\in \Modules R$ and let $\pi_X\colon X\to \Cok(X)$ be the natural projection. Pick $M\in\Modules R$ and choose a projective resolution $\rho M$. In particular $M=\Cok(\rho M)$. Since $\T$ is contravariantly finite in $\C(\Proj R)$ by Lemma~\ref{lem:preimageofcontfiniteiscontfinite}, $\rho M$ admits a right $\T$-approximation $\alpha\colon t(\rho M) \to \rho M$. We claim that the induced morphism $a\colon\Cok(t(\rho M))\to M$ satisfying
\[
a \circ \pi_{t(\rho M)}=\pi_{\rho M} \circ \alpha
\]
is a right $\Cok(\T)$-approximation in $\Modules R$. What we need to show is that each morphism $b \colon \Cok(T) \to M$ in $\Modules R$ with $T\in\T$ factors through $a$. By `comparison', such a $b$ lifts to a chain map $\beta\colon T\to\rho M$ such that
\[
b \circ \pi_T = \pi_{\rho M} \circ \beta,
\]
and since $\alpha$ is a right $\T$-approximation, there is a chain map $\gamma\colon T \to t(\rho M)$ making
\[
\beta = \alpha \circ \gamma.
\]
Letting $c\colon \Cok(T)\to\Cok(t(\rho M))$ be the induced morphism in $\Modules R$ ensuring
\[
c \circ \pi_T = \pi_{t(\rho M)} \circ \gamma,
\]
we have by now established the diagram
\begin{center}
\begin{tikzpicture}[baseline=(current bounding box.center)]
   \matrix(a)[matrix of math nodes,
   row sep=2em, column sep=2em,
   text height=1.5ex, text depth=0.25ex]
   {t(\rho M) & \Cok(t(\rho M)) & \Cok(T) & T  \\
    \rho M & M && \rho M \\};
   \path[->,font=\scriptsize] (a-1-1) edge node[below]{$\pi_{t(\rho M)}$}(a-1-2)
   (a-1-3) edge node[above]{$c$}(a-1-2)
   (a-1-4) edge node[below]{$\pi_T$}(a-1-3)
   (a-2-1) edge node[above]{$\pi_M$}(a-2-2)
   (a-2-4) edge node[above]{$\pi_M$}(a-2-2)
   (a-1-1) edge node[left]{$\alpha$}(a-2-1)
   (a-1-2) edge node[left]{$a$}(a-2-2)
   (a-1-4) edge node[left]{$\beta$}(a-2-4)
   (a-1-3) edge node[above]{$b$}(a-2-2)
   (a-1-4) edge[out=160, in=20] node[above]{$\gamma$}(a-1-1)
   (a-2-4) edge[out=200, in=340] node[below]{$\id$}(a-2-1);
\end{tikzpicture}
\end{center}
with each square commutative. It follows that $b \circ \pi_T = a \circ c \circ \pi_T$ and hence the desired factorization $b = a\circ c$ since $\pi_T$ is an epimorphism.
\end{proof}

The alluded to existence of right approximations by Gorenstein projective modules is now immediate once one makes the following standard observation.

\begin{lemma} \label{lem:rightadjointimpiescontfinite}
Let $\A$ be a category and suppose $\B$ is a subcategory such that the inclusion $\mu \colon \B \hookrightarrow \A$ has a right adjoint $\mu_{\ast}$. Then $\B$ is contravariantly finite in $\A$.
\end{lemma}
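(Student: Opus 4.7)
The plan is to argue that for each object $A \in \A$, the counit $\varepsilon_A \colon \mu\mu_{\ast}(A) \to A$ of the adjunction $(\mu, \mu_{\ast})$ serves as a right $\B$-approximation. Since $\mu_{\ast}(A)$ lies in $\B$ and $\mu$ is just the inclusion, $\varepsilon_A$ is indeed a morphism from an object of $\B$ to $A$, so the only thing to verify is the factorization property.

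To check this, I would take any morphism $\beta \colon B \to A$ with $B \in \B$ and consider its transpose $\widetilde{\beta} \colon B \to \mu_{\ast}(A)$ under the adjunction isomorphism $\A(\mu B, A) \cong \B(B, \mu_{\ast}(A))$. The general naturality property of the counit then gives $\beta = \varepsilon_A \circ \mu(\widetilde{\beta})$, which is precisely the required factorization of $\beta$ through $\varepsilon_A$. Hence $\varepsilon_A$ is a right $\B$-approximation of $A$, and since $A$ was arbitrary, $\B$ is contravariantly finite in $\A$.

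There is no real obstacle here; the statement is essentially a direct unpacking of the definition of an adjoint and of an approximation. The only minor subtlety worth being explicit about is that since $\mu$ is a fully faithful inclusion, one may harmlessly identify $\mu(\widetilde{\beta})$ with $\widetilde{\beta}$ itself when viewing the factorization inside $\A$.
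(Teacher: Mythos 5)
Your proof is correct and matches the paper's argument: both identify the counit $\varepsilon_A\colon \mu\mu_{\ast}(A)\to A$ as the right $\B$-approximation, the only difference being that the paper phrases the factorization property as the statement that $\A(\mu(B),\varepsilon_A)$ is an epimorphism, while you make the factorization explicit via the transpose $\widetilde{\beta}$.
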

\begin{proof}
It suffices to show that, for each $A\in\A$, the counit $\varepsilon_A\colon \mu \circ\mu_{\ast}(A)\to A$ is a right $\B$-approximation. But the isomorphism of the adjunction is given, for each $B\in \B$, by
\[
\B(B, \mu_{\ast}(A)) \xrightarrow{\mu} \A(\mu(B), \mu\circ\mu_{\ast}(A)) \xrightarrow{\A(\mu(B),\varepsilon_A)} \A(\mu(B), A).
\]
In particular this means that $\A(\mu(B),\varepsilon_A)$ is an epimorphism.
\end{proof}

\begin{corollary} \label{cor:GProjcontfinite}
Let $\Lambda$ be a noetherian ring with a dualizing complex. Then $\GProj \Lambda$ is contravariantly finite in $\Modules \Lambda$.
\end{corollary}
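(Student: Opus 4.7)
The plan is to build the result from three ingredients already in place: the existence of adjoints to various inclusions established in Observation~\ref{obs:leftandrightadjoints}, the transfer principle of Proposition~\ref{prop:contfiniteboundaries}, and the standard identification of Gorenstein projective modules as boundaries of totally acyclic complexes.

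First I would verify that the inclusion $\Ktac(\Proj \Lambda)\hookrightarrow \K(\Proj \Lambda)$ admits a right adjoint. This inclusion factors as
\[ \Ktac(\Proj \Lambda)\hookrightarrow \K_{\coac}(\Proj \Lambda)\hookrightarrow \K(\Proj \Lambda), \]
and since $\Lambda$ admits a dualizing complex, both of these inclusions admit right adjoints by Observation~\ref{obs:leftandrightadjoints}. Composing these, we get a right adjoint to the full inclusion, which by Lemma~\ref{lem:rightadjointimpiescontfinite} implies that $\Ktac(\Proj \Lambda)$ is contravariantly finite in $\K(\Proj \Lambda)$.

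Next, I would apply Proposition~\ref{prop:contfiniteboundaries} with $R=\Lambda$ and $\SSS = \Ktac(\Proj \Lambda)$. The preimage $\T$ in $\C(\Proj \Lambda)$ consists precisely of the totally acyclic complexes of projective $\Lambda$-modules (viewed as honest chain complexes, not up to homotopy). The remaining task is to identify the subcategory $\Cok(\T)\subseteq \Modules \Lambda$ with $\GProj \Lambda$. For any $T\in\T$, acyclicity gives $\Cok(T^{-1}\to T^0) \cong \B^1(T)$, which is a Gorenstein projective module (being the $0$-boundaries of the totally acyclic complex $T[-1]$). Conversely, given $M\in \GProj \Lambda$, choose a totally acyclic complex $T'$ with $M = \B^0(T')$; then $T := T'[-1]$ lies in $\T$ and $M = \B^1(T) \cong \Cok(T^{-1}\to T^0)$. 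Thus $\Cok(\T) = \GProj \Lambda$, and Proposition~\ref{prop:contfiniteboundaries} yields the desired contravariant finiteness.

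There is essentially no hard step here once the preparatory machinery is in place; the only point requiring mild care is the bookkeeping in identifying $\Cok(\T)$ with $\GProj \Lambda$ via a degree shift, and confirming that the adjoints guaranteed by Observation~\ref{obs:leftandrightadjoints} really do compose to give the right adjoint used in the appeal to Lemma~\ref{lem:rightadjointimpiescontfinite}.
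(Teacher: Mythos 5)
Your proof is correct and follows exactly the same route as the paper: invoke Observation~\ref{obs:leftandrightadjoints} (composing the two right adjoints along $\Ktac(\Proj\Lambda)\hookrightarrow\K_{\coac}(\Proj\Lambda)\hookrightarrow\K(\Proj\Lambda)$), apply Lemma~\ref{lem:rightadjointimpiescontfinite} to obtain contravariant finiteness of $\Ktac(\Proj\Lambda)$ in $\K(\Proj\Lambda)$, and then use Proposition~\ref{prop:contfiniteboundaries}. The only difference is that you spell out the identification $\Cok(\T)=\GProj\Lambda$ via a shift, which the paper leaves implicit; just double-check your shift conventions there, since $\B^1(T)$ is $\B^0(T[1])$ rather than $\B^0(T[-1])$ in the cohomological indexing the paper uses, though this bookkeeping slip does not affect the substance of the argument.
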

\begin{proof}
A combination of Observation~\ref{obs:leftandrightadjoints} and Lemma~\ref{lem:rightadjointimpiescontfinite} reveals the contravariant finiteness of $\Ktac(\Proj \Lambda)$ in $\K(\Proj \Lambda)$, so Proposition~\ref{prop:contfiniteboundaries} settles the claim.
\end{proof}

\section{An explicit right adjoint} \label{section:rightadjoint}

The aim of this section is to provide a somewhat strengthened dual of Bousfield's localization lemma \cite{MR543337,MR728454}. We start by recalling the construction as presented in \cite{MR1191736}, see also \cite[Chapter III, Theorem 2.3]{MR2327478} and \cite[Theorem~4.3]{MR2927802}.

Here, for the sake of generality, we consider perpendicular categories with respect to only certain shifts, rather than all shifts. More precisely, we write
\begin{align*}
\X^{\perp_0} & = \{ T \in \T \mid \T(\X, T) = 0 \}, & \X^{\perp_{\leq 0}} & = \{ T \in \T \mid \forall i \leq 0 \colon \T(\X, T[i]) = 0 \}, \\
\prescript{\perp_0}{} \X & = \{ T \in \T \mid \T(T,\X) = 0 \}, & \prescript{\perp_{\leq 0}}{} \X & = \{ T \in \T \mid \forall i \leq 0 \colon  \T(T,\X[i]) = 0 \},
\end{align*}
and similar for \( \perp_{\geq 0} \).
With this convention we obviously have
\begin{equation} \label{eqallshifts}
\X^{\perp} = \{ X[i] \mid X \in \X, i \in \mathbb{Z} \}^{\perp_0}
\end{equation}
and similar for left perpendicular subcategories.

\begin{theorem} \label{thm:boudfieldslemma}
Let $\T$ be a triangulated category which has coproducts, and let $\X$ be a set of compact objects.
\begin{enumerate}[label=\emph{(\roman*)}]
\item The pair
\[ \left( \prescript{\perp_0}{}(\X^{\perp_0}), \X^{\perp_0}\right) \]
is a torsion pair on $\T$.
\item The pair
\[ \left( \prescript{\perp_{\leq 0}}{}(\X^{\perp_{\leq 0}}), \X^{\perp_{\leq 0}} \right) \] 
is a t-structure, and moreover \(  \prescript{\perp_{\leq 0}}{}(\X^{\perp_{\leq 0}}) \) is the smallest subcategory of $\T$ containing $\X$ and closed under \([1]\), coproducts, and extensions.
\end{enumerate}
\end{theorem}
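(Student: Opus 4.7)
The plan is to carry out the classical small-object argument of Bousfield, uniformly for both parts. Write $\mathcal{X}_0$ for $\X$ in part (i) and for $\{X[j] : X \in \X,\ j \geq 0\}$ in part (ii). Since compactness is preserved under shifts, $\mathcal{X}_0$ is in either case a set of compact objects, and by inspection the right-hand classes $\X^{\perp_0}$ and $\X^{\perp_{\leq 0}}$ are both of the form $\mathcal{X}_0^{\perp_0}$. First I would construct, for each $T = T_0 \in \T$, an inductive sequence $T_0 \to T_1 \to T_2 \to \cdots$ by taking at stage $i$ the set $\Phi_i$ of all morphisms $X \to T_i$ with $X \in \mathcal{X}_0$, forming the coproduct $I_i = \coprod_{\phi \in \Phi_i} X_\phi$, and completing the canonical morphism $\phi_i \colon I_i \to T_i$ to a triangle $I_i \to T_i \to T_{i+1} \to I_i[1]$. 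Set $T^+ := \hocolim T_i$, with canonical morphism $T \to T^+$.

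The target $T^+$ lies in $\mathcal{X}_0^{\perp_0}$ by a routine compactness argument: for $X \in \mathcal{X}_0$ one has $\T(X, T^+) = \colim_i \T(X, T_i)$, and every morphism $X \to T_i$ is killed upon composition with $T_i \to T_{i+1}$ because it factors through $\phi_i$. To place the fiber $A$ of $T \to T^+$ in the relevant left perpendicular, I would denote by $F_i$ the fiber of $T \to T_i$, so that $F_0 = 0$ and the octahedral axiom yields triangles $F_i \to F_{i+1} \to I_i \to F_i[1]$. By induction each $F_i$ lies in the smallest subcategory $\mathcal{S}$ of $\T$ containing $\mathcal{X}_0$ and closed under coproducts and extensions. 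The left perpendicular $\prescript{\perp_0}{}(\mathcal{X}_0^{\perp_0})$ contains $\mathcal{X}_0$ tautologically and is closed under coproducts (since $\T(-,Z)$ sends coproducts to products) and under extensions (five lemma), whence $\mathcal{S} \subseteq \prescript{\perp_0}{}(\mathcal{X}_0^{\perp_0})$. The standard fact that homotopy colimits of compatible systems of triangles are again triangles identifies $A$ with $\hocolim F_i$, sitting in a triangle $\coprod F_i \to \coprod F_i \to A \to (\coprod F_i)[1]$. For $Z \in \mathcal{X}_0^{\perp_0}$, applying $\T(-,Z)$ yields $\T(A, Z) \hookrightarrow \T(\coprod F_i, Z) = \prod \T(F_i, Z) = 0$, so $A \in \prescript{\perp_0}{}(\mathcal{X}_0^{\perp_0})$.

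This already furnishes the decomposition required for the torsion pair in (i), as Hom-orthogonality holds by definition. For (ii), the upgrade to a t-structure requires only that the left-hand class is closed under $[1]$, which in turn reduces to closure of $\X^{\perp_{\leq 0}}$ under $[-1]$ --- immediate from the definition. For the final assertion of (ii), let $\mathcal{S}'$ denote the smallest subcategory containing $\X$ and closed under $[1]$, coproducts, and extensions. Since $\mathcal{X}_0 = \{X[j] : X \in \X,\ j \geq 0\}$ is already stable under $[1]$, and the operations of coproduct and extension commute with $[1]$, the subcategory $\mathcal{S}$ built above coincides with $\mathcal{S}'$; hence $\mathcal{S}' \subseteq \prescript{\perp_{\leq 0}}{}(\X^{\perp_{\leq 0}})$. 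Conversely, if $T \in \prescript{\perp_{\leq 0}}{}(\X^{\perp_{\leq 0}})$, then $\T(T, T^+) = 0$, so the triangle $A \to T \to T^+$ splits to give $T \cong A \oplus T^+[-1]$. The summand $T^+[-1]$ lies in the aisle (which is closed under direct summands) and in the coaisle (closed under $[-1]$), hence vanishes, and $T \cong A \in \mathcal{S}'$.

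The main technical delicacy is the identification $A \cong \hocolim F_i$, which rests on the exactness of $\hocolim$ on compatible systems of triangles and on the fact that the homotopy colimit of the constant sequence at $T$ is $T$ itself. With this in hand, the remainder reduces to routine manipulations with perpendicular classes, coproducts, and compactness.
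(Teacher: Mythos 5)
The paper does not give a proof of Theorem~\ref{thm:boudfieldslemma}; it only cites Neeman \cite{MR1191736} and recalls the construction of the triangle $T_{\X}\to T\to T_{\X^{\perp_0}}$. Your construction ($T_0\to T_1\to T_2\to\cdots$ via $\Add\mathcal X_0$-approximations, $T^+=\hocolim T_i$, and the fibers $F_i$) matches that outline exactly. Most of the verification you give is correct: the compactness computation showing $T^+\in\mathcal X_0^{\perp_0}$, the octahedral triangles $F_i\to F_{i+1}\to I_i$, the closure properties of $\prescript{\perp_0}{}(\mathcal X_0^{\perp_0})$, and the final direct-summand argument for the minimality claim in (ii) (modulo a harmless typo: the split triangle gives $A\cong T\oplus T^+[-1]$, not the other way around).

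The genuine gap is the line ``applying $\T(-,Z)$ yields $\T(A,Z)\hookrightarrow\T(\coprod F_i,Z)$.'' This injectivity is \emph{not} automatic. Applying $\T(-,Z)$ to the triangle $\coprod F_i\xrightarrow{1-t}\coprod F_i\to A\to(\coprod F_i)[1]$ gives the exact sequence
\[
\prod_i\T(F_i[1],Z)\xrightarrow{(1-t)[1]^*}\prod_i\T(F_i[1],Z)\to\T(A,Z)\to\prod_i\T(F_i,Z),
\]
and since the rightmost term vanishes, one only gets $\T(A,Z)\cong\operatorname{coker}\bigl((1-t)[1]^*\bigr)$, which is precisely $\varprojlim^1\T(F_i[1],Z)$. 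Your claimed injection would require this $\varprojlim^1$ to vanish, which you do not establish. And it is not vacuous: for part (i), $F_i[1]$ need not lie in $\prescript{\perp_0}{}(\X^{\perp_0})$, so the groups $\T(F_i[1],Z)$ are generally nonzero, and one cannot skip the argument. The fix is to observe that the triangles $F_{i+1}\to I_i\to F_i[1]\to F_{i+1}[1]$ together with $\T(I_i,Z)=0$ force the transition maps $\T(F_{i+1}[1],Z)\to\T(F_i[1],Z)$ to be surjective; the inverse system therefore satisfies Mittag--Leffler, so $\varprojlim^1=0$ and hence $\T(A,Z)=0$. This is not a cosmetic omission --- controlling exactly this $\varprojlim^1$ term is the entire content of the ``dual Mittag--Leffler'' hypothesis the paper builds into its notion of $0$-cocompactness when proving the dual statement, Theorem~\ref{theorem.t_from_left_perp}. (A secondary, smaller point: in the final step of (ii) you invoke ``$T\cong A\in\mathcal S'$,'' but you only established $A\in\prescript{\perp_0}{}(\mathcal X_0^{\perp_0})$; to get $A\in\mathcal S'$ one should note that $\mathcal S'$, being closed under $[1]$ and extensions, is closed under cones, so $A=\Cone(1-t)$ lies in $\mathcal S'$.)
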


From the proof of this theorem, we only recall the following explicit construction of the triangle
\[
T_{\X} \to T \to T_{\X^{\perp_0}} \to T_{\X}[1]
\]
with $T_{\X} \in \prescript{\perp_0}{}(\X^{\perp_0})$ and $T_{\X^{\perp_0}} \in \X^{\perp_0}$:

Let $T_0=T$. Pick a right $\Add \X$-approximation $X_0 \to T_0$ of $T_0$ --- this approximation exists since $\X$ is a set and $\T$ has coproducts --- and complete it to a triangle $X_0 \to T_0 \to T_1 \to X_0[1]$. Pick a right $\Add \X$-approximation $X_1 \to T_1$ of $T_1$ and complete it to a triangle $X_1 \to T_1 \to T_2 \to X_1[1]$. Then $T_{\X^{\perp_0}} = \hocolim T_i$, and $T_{\X}$ is obtained by completing the natural map $T \to T_{\X^{\perp_0}}$ to a triangle.

This construction covers each functor $\LL$ appearing in Diagram~\ref{bigpicture} in the Summary of Section~\ref{sec:bigcat}. In that setup we consider subcategories $X^{\perp}$ orthogonal to a given object with respect to all extensions as specified in Proposition~\ref{prop:dualizingtestscoacyclicity}. By \eqref{eqallshifts} this is a special case of the setup of Theorem~\ref{thm:boudfieldslemma}, and in this case we obtain a stable t-structure $( \prescript{\perp}{}(\X^{\perp}), \X^{\perp} )$. Thus we obtain the following descriptions of the left adjoints.


\begin{corollary} \label{rem:explicitL}
Let $\Lambda$ be a noetherian ring with a dualizing complex $D_\Lambda$.
Then the following hold.
\begin{enumerate}[label=\emph{(\roman*)}]
\item The left adjoint of the inclusion $\K_{\coac}(\Proj \Lambda) \hookrightarrow \K(\Proj \Lambda)$ is given by
\[
T \mapsto T_{\left(\rho\RHom_\Lambda(D_\Lambda,\Lambda)\right)^{\perp}};
\]
\item The left adjoint of the inclusion $\K_{\ac}(\Inj \Lambda) \hookrightarrow \K(\Inj \Lambda)$ is given by
\[
T \mapsto T_{(\lambda\Lambda)^{\perp}};
\]
\item The left adjoint of the inclusion $\Ktac(\Proj \Lambda) \hookrightarrow \K(\Proj \Lambda)$ is given by
\[
T \mapsto T_{\left((\rho\RHom_\Lambda(D_\Lambda,\Lambda)) \oplus \Lambda\right)^{\perp}};
\]
\item The left adjoint of the inclusion $\Ktac(\Inj \Lambda) \hookrightarrow \K(\Inj \Lambda)$ is given by
\[
T \mapsto T_{((\lambda\Lambda) \oplus D_\Lambda)^{\perp}}.
\]
\end{enumerate}
\end{corollary}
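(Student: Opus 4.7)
The plan is to recognise each of the four inclusions as the coaisle inclusion of a stable $t$-structure supplied by Bousfield's localisation lemma. Invoking Proposition~\ref{prop:dualizingtestscoacyclicity}, each of the four subcategories in question can be written as $\X^{\perp}$ for a finite, hence small, set $\X$ of compact objects in the ambient homotopy category: in (i) take $\X=\{\rho\RHom_\Lambda(D_\Lambda,\Lambda)\}$, in (ii) take $\X=\{\lambda\Lambda\}$, and for (iii) and (iv) take the two-element sets $\{\rho\RHom_\Lambda(D_\Lambda,\Lambda),\Lambda\}$ and $\{\lambda\Lambda,D_\Lambda\}$, respectively, using the identity $(A\oplus B)^{\perp}=\{A,B\}^{\perp}$.

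Theorem~\ref{thm:boudfieldslemma} is phrased in terms of $\perp_0$ whereas the corollary uses the full $\perp$, so I would first pass to the shift closure $\widetilde\X=\{X[i]\mid X\in\X,\,i\in\mathbb Z\}$. This is still a set of compact objects and, by equation~\eqref{eqallshifts}, satisfies $\widetilde\X^{\perp_0}=\X^{\perp}$. Since $\K(\Proj\Lambda)$ and $\K(\Inj\Lambda)$ admit coproducts, applying Theorem~\ref{thm:boudfieldslemma}(i) to $\widetilde\X$ yields a torsion pair $\bigl(\prescript{\perp}{}(\X^{\perp}),\X^{\perp}\bigr)$ in the ambient category; both classes are manifestly closed under all shifts, so this torsion pair is in fact a stable $t$-structure. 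The explicit construction recalled immediately before the corollary then realises the coaisle truncation as the functor $T\mapsto T_{\X^{\perp}}$, namely the homotopy colimit of the sequence obtained by iterated cones of right $\Add\widetilde\X$-approximations.

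The conclusion now follows from the general fact, recorded in Subsection~\ref{lastsubsection}, that for a stable $t$-structure $(\mathcal U,\mathcal V)$ on $\T$ the inclusion $\mathcal V\hookrightarrow\T$ admits a left adjoint given by $T\mapsto T_{\mathcal V}$, the third term of the functorial triangle $T_{\mathcal U}\to T\to T_{\mathcal V}\to T_{\mathcal U}[1]$. Applied to our four setups this produces exactly the four formulas (i)--(iv). There is no serious obstacle; the one point that requires attention is the cosmetic gap between $\perp$ and $\perp_0$ in the hypothesis of Theorem~\ref{thm:boudfieldslemma}, which is absorbed cleanly by replacing $\X$ with its shift closure $\widetilde\X$ and appealing to equation~\eqref{eqallshifts}.
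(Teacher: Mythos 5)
Your proposal is correct and is essentially the paper's own argument, which is given compactly in the paragraph immediately preceding the corollary: use Proposition~\ref{prop:dualizingtestscoacyclicity} to express each subcategory as a right perpendicular of a compact object, pass through \eqref{eqallshifts} to match the hypotheses of Theorem~\ref{thm:boudfieldslemma}, obtain the stable $t$-structure $\bigl(\prescript{\perp}{}(\X^{\perp}),\X^{\perp}\bigr)$, and read off the left adjoint $T\mapsto T_{\X^{\perp}}$ from the explicit construction and the adjoint-functor discussion in Subsection~\ref{lastsubsection}. Your extra remarks about the shift closure $\widetilde{\X}$, the identity $(A\oplus B)^{\perp}=\{A,B\}^{\perp}$, and the observation that both orthogonal classes are shift-closed simply make explicit steps the paper leaves implicit.
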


Unfortunately, the naive dual of Theorem~\ref{thm:boudfieldslemma} is not as useful as the original in the situations we are typically interested in. Indeed, triangulated categories which somehow derive from module categories rarely have any non-zero cocompact objects. However, as we will point out in the sequel, we can get away with the weaker notion of $0$-cocompactness which is not as elusive in concrete situations.

This section consists of two essentially independent parts. First, in Theorem~\ref{theorem.t_from_left_perp} we provide the desired dual. Then, in Theorem~\ref{theorem.is_0_cocomp} and in particular Corollary~\ref{corollary.artinina_0-cocompact} we show that the prerequisites of our dual are met in reasonable situations.

We start by introducing our notion of $0$-cocompactness.

\begin{definition}
A sequence
\[ A_0 \xrightarrow{f_0} A_1 \xrightarrow{f_1} A_2 \to \cdots \]
of objects and morphisms in an abelian category is \emph{dual Mittag-Leffler} if for each $i$ the increasing sequence
\[ 0 \subset \Ker f_i \subset \Ker f_{i+1} f_i \subset \cdots \]
stabilizes.

An object $X$ in a triangulated category $\T$ is \emph{$0$-cocompact} if $\T(\holim Y_i,X)=0$ for each sequence
\[ \cdots \to Y_2 \to Y_1 \to Y_0  \]
in $\T$ with the property that the induced sequence
\[ \T(Y_0,X[1]) \to \T(Y_1,X[1]) \to \T(Y_2,X[1]) \to \cdots \]
is dual Mittag-Leffler and $\colim \T(Y_i,X)=0$.
\end{definition}

\begin{remark}
Let us motivate the name ``0-cocompact'' and explain in what way it is a weak version of cocompact.

Recall that in a triangulated category $\T$ with set-indexed coproducts, an object $X$ is \emph{compact} if the functor $\T(X,-)\colon \T\to \Ab$ commutes with coproducts. If $\T$ has set-indexed products, then the dual notion should be formulated as follows. An object $X\in\T$ is `cocompact' if the functor $\T(-,X)\colon \T^{\op}\to \Ab$ turns products to coproducts, i.e.\ for any set of objects $\left(Y_i \ | \ i\in I\right)$ in $\T$ we have an isomorphism
\begin{align}\label{align cocompact isomorphism}
\Hom_{\T}(\prod_{i\in I}Y_i,X)\cong \coprod_{i\in I}\Hom_{\T}(Y_i,X).
\end{align}
Consider now a finite dimensional algebra $\Lambda$. Clearly, $\Lambda$ is a compact object in the derived category $\D(\Modules \Lambda)$. On the other hand, the dual $D\Lambda$ awkwardly fails to be cocompact in $\D(\Modules \Lambda)$. However, we show in Theorem~\ref{theorem.is_0_cocomp} that $D\Lambda$ is in fact $0$-cocompact in the homotopy category $\K(\Modules \Lambda)$.

Replacing products by homotopy limits and coproducts by colimits, $X$ being $0$-cocompact simply means that if the right hand side of (\ref{align cocompact isomorphism}) is zero and some extra conditions hold, then the left hand side of (\ref{align cocompact isomorphism}) is also zero. As it turns out, this property suffices for our purposes. Let us revisit the cocomplete case and consider a sequence $X_0 \to X_1 \to X_2 \to \cdots $ in $\T$. Then by Neeman~\cite{MR1191736}, we know that there is the following isomorphism as functors on $\T^{\mathsf{c}}$.
\begin{equation}
\label{isomcolimcompactcase}
\colim \T(-, X_i) \cong \T(-, \hocolim{X_i})
\end{equation}
From this point of view, the $0$-cocompactness of $X$ means that the vanishing of the dual left side of $(\ref{isomcolimcompactcase})$ implies the vanishing of the dual right side of (\ref{isomcolimcompactcase}).
\end{remark}

\begin{remark}
The above definition might seem quite unnatural: Why would one want to require the dual Mittag-Leffler condition at all? And why for shifts?

Consider the case that \( \T = \D(\mathcal{A}) \), where \( \mathcal{A} \) is an abelian category with exact products. For a sequence
\[ \cdots \to Y_2 \to Y_1 \to Y_0 \]
of complexes, we see that the defining triangle
\[ \holim Y_i \to \prod Y_i \to \prod Y_i \to \]
gives rise to the exact sequence
\[ \begin{tikzpicture}[xscale=1.3]
\node (A) at (0,0) {\(\prod \HH^0(Y_i[-1])\)};
\node (B) at (2,0) {\(\prod \HH^0(Y_i[-1])\)}; 
\node (C) at (4,0) {\( \HH^0(\holim Y_i) \)}; 
\node (D) at (6,0) {\(\prod \HH^0(Y_i)\)}; 
\node (E) at (8,0) {\(\prod \HH^0(Y_i)\)};
\node (L1) at (3,-1) {\( \varprojlim^1 \HH^0(Y_i[-1])\)}; 
\node (L) at (5,-1) {\( \varprojlim \HH^0(Y_i) \) };
\draw [->] (A) to (B);
\draw [->] (B) to (C);
\draw [->] (C) to (D);
\draw [->] (D) to (E);
\draw [->>] (B) to (L1);
\draw [>->] (L1) to (C);
\draw [->>] (C) to (L);
\draw [>->] (L) to (D);
\end{tikzpicture} \]
Thus we see that, even if a \( \Hom \)-functor behaves as nicely as homology, we can only get a result on \( \holim Y_i \) provided we have a condition on the derived projective limit of the shifted sequence.

To make this more explicit, take \( \mathcal{A} \) to be the category of vector spaces over a field \( k \), and choose \( X = k \). Then \( \Hom_{\D(k)}(-, k) = D \HH^0 \). Thus we have the short exact sequence
\[ 0 \to \Hom_{\D(k)}( \varprojlim Y_i, k) \to \Hom_{\D(k)} ( \holim Y_i, k) \to  \Hom_{\D(k)}( {\varprojlim}^1 Y_i, k[1]) \to 0 \]
showing that we need to make both end terms vanish in order to conclude that the middle also vanishes.
\end{remark}

We are now ready to state and prove the main result of this section.

\begin{theorem} \label{theorem.t_from_left_perp}
Let $\T$ be a triangulated category which has products. Let $\X$ be a set of $0$-cocompact objects.
\begin{enumerate}[label=\emph{(\roman*)}]
\item The pair
\[ 
\left(\prescript{\perp_{\geq 0}}{}{\X}, (\prescript{\perp_{\geq 0}}{}{\X})^{\perp_{\geq 0}}\right) 
\]
is a co-t-structure in $\T$.

\item The pair
\[ 
\left(\prescript{\perp}{}{\X}, (\prescript{\perp}{}{\X})^{\perp}\right) 
\]
is a stable t-structure, and moreover $(\prescript{\perp}{}{\X})^{\perp}$ is the smallest triangulated subcategory of \( \T \) which contains \( \X \) and is closed under products.
\end{enumerate}
\end{theorem}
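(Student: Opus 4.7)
The plan is to dualize the construction behind Bousfield's lemma (\cite{MR1191736}): rather than building up a target via $\hocolim$ and right $\Add\X$-approximations, we will build a fiber via $\holim$ and left $\Prod\X'$-approximations, where $\X' = \{X[i] : X \in \X, i \in \mathbb{Z}\}$. Given $T \in \T$, we set $T^0 = T$, and inductively choose a left $\Prod\X'$-approximation $T^i \to Y^i$ (which exists by the dual of Lemma~\ref{lem:covariantlyfinite}), completing it to a triangle $T^{i+1} \to T^i \to Y^i \to T^{i+1}[1]$. This yields a tower $\cdots \to T^2 \to T^1 \to T^0 = T$, and we set $\tilde T = \holim T^i$ and $T^{\X} = \Cone(\tilde T \to T)$.

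To place $\tilde T$ in $\prescript{\perp}{}{\X}$, fix $X \in \X$ and $n \in \mathbb{Z}$. Since $X[n] \in \Prod\X'$, every morphism $T^i \to X[n]$ factors through $T^i \to Y^i$ and hence becomes zero upon precomposition with $T^{i+1} \to T^i$. Therefore each transition map in the sequence $\T(T^i, X[n])$ is zero, so $\colim_i \T(T^i, X[n]) = 0$, and the same reasoning for $\T(T^i, X[n+1])$ ensures the dual Mittag--Leffler condition is vacuously met. The $0$-cocompactness of $X[n]$ then yields $\T(\tilde T, X[n]) = 0$, placing $\tilde T$ in $\prescript{\perp}{}{\X}$.

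For the cone, we invoke the octahedral axiom on $T^{n+1} \to T^n \to T^0$ to obtain triangles $Y^n \to C_{n+1} \to C_n$, where $C_n = \Cone(T^n \to T^0)$, together with a compatible inverse system $\cdots \to C_2 \to C_1 \to C_0 = 0$. Taking $\holim$ of the compatible family of triangles $T^n \to T^0 \to C_n$ (whose middle tower is the constant $T^0$) produces a triangle $\tilde T \to T^0 \to \holim C_n$, so $T^{\X} \cong \holim C_n$. Each $C_n$ is an iterated extension of the $Y^j \in \Prod\X'$ for $j < n$, hence lies in the smallest triangulated subcategory $\mathcal Y$ of $\T$ that contains $\X$ and is closed under products; and $\mathcal Y$ is also closed under $\holim$, since that operation is obtained from products via a single triangle. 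Thus $T^{\X} \in \mathcal Y \subseteq (\prescript{\perp}{}{\X})^{\perp}$.

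The triangle $\tilde T \to T \to T^{\X}$ thereby witnesses that $(\prescript{\perp}{}{\X}, (\prescript{\perp}{}{\X})^{\perp})$ is a stable $t$-structure. The identification $(\prescript{\perp}{}{\X})^{\perp} = \mathcal Y$ then follows by a splitting argument: for $T \in (\prescript{\perp}{}{\X})^{\perp}$ the map $\tilde T \to T$ vanishes, the triangle splits into $T^{\X} \cong T \oplus \tilde T[1]$, and closure under summands of the right-hand class (which is automatic from its definition as a vanishing locus) forces $\tilde T$ into $\prescript{\perp}{}{\X} \cap (\prescript{\perp}{}{\X})^{\perp} = 0$, whence $T \cong T^{\X} \in \mathcal Y$. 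Part (i) proceeds analogously via left $\Prod\{X[i] : X \in \X, i \geq 0\}$-approximations; the principal obstacle we foresee is verifying that $T^{\X}$ lies in $(\prescript{\perp_{\geq 0}}{}{\X})^{\perp_{\geq 0}}$, since the restricted range of shifts makes the $\holim$ step delicate at the boundary degree and requires a more careful analysis of the associated Milnor-type exact sequence.
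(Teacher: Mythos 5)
Your construction (the dual Bousfield tower via left $\Prod$-approximations, the observation that the transition maps in $\T(T^i, X[n])$ vanish, and the appeal to $0$-cocompactness) is exactly the one the paper uses, and your splitting argument for the minimality claim is sound as far as it goes. However, there is a genuine gap in the step where you pass to the cone: you assert that "taking $\holim$ of the compatible family of triangles $T^n \to T^0 \to C_n$ produces a triangle $\tilde T \to T^0 \to \holim C_n$." In a plain triangulated category with products, homotopy limits of a compatible inverse system of triangles need \emph{not} form a triangle --- this is precisely the failure of functoriality of cones, and the paper even contains a remark pointing out that the "holim of triangles" argument is only available when $\T$ is enhanced (e.g.\ sits at the base of a stable derivator). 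The paper works around this by first forming the genuine triangle $\prod T_i \to \prod T \to \prod Y_i \to (\prod T_i)[1]$ (a product of triangles \emph{is} a triangle), writing the vertical morphisms $1-s$ and $1-s'$ on the first two columns, and then invoking the triangulated $3\times 3$-lemma to complete the diagram. This produces \emph{some} object fitting into the right triangles, from which one reads off that $T_{\X}$ is an extension of $(\prod Y_i)[-1]$ and $\prod Y_i$ and hence lies in $\widehat{\X}$; no identification of that object with a canonical $\holim C_n$ is ever claimed or needed. Your argument should be recast in these terms to be valid for an arbitrary triangulated category with products.

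You also flag, without resolving, the verification that $T^{\X} \in (\prescript{\perp_{\geq 0}}{}{\X})^{\perp_{\geq 0}}$ in part (i). This is indeed the hard point of (i); the paper addresses it by following steps (ii)--(v) of the proof of Proposition 4.5 in \cite{MR2927802} almost verbatim, and then deduces (ii) from (i) by passing to $\X[\leq 0]$ or $\X[\mathbb Z]$. As it stands your proposal has this step missing, so it does not yet establish the co-t-structure claim.
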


\begin{proof}
In (i), the pair is clearly $\Hom$-orthogonal. It is also immediate that \( \prescript{\perp_{\geq 0}}{}{\X} \) is closed under \( [-1] \) and that \(  (\prescript{\perp_{\geq 0}}{}{\X})^{\perp_{\geq 0}} \) is closed under \( [1] \). So it remains to show that each $T\in\T$ appears in a triangle
\begin{equation}\label{equation functorial triangle}
  T_{\prescript{\perp_{\geq 0}}{}{\X}} \to T \to T_{\X} \to T_{\prescript{\perp_{\geq 0}}{}{\X}}[1]
\end{equation}
with $T_{\prescript{\perp_{\geq 0}}{}{\X}}\in\prescript{\perp_{\geq 0}}{}{\X} $ and $T_{\X}\in(\prescript{\perp_{\geq 0}}{}{\X})^{\perp_{\geq 0}}$.

Consider the following construction: Write $T_0 = T$, and \( \X[\geq 0] = \{ X[i] \mid X \in \X \text{ and } i \geq 0 \} \). Pick a left $\Prod \X[\geq 0]$-approxima\-tion $T_0 \to X_0$ of $T_0$ --- which exists by Lemma~\ref{lem:covariantlyfinite} --- and complete it to a triangle
\begin{equation}
\label{firsttriangle}
T_1 \to T_0 \to X_0 \to T_1[1].
\end{equation}
Pick a left $\Prod \X[\geq 0]$-approximation $T_1 \to X_1$ of $T_1$, complete it to a triangle
\begin{equation}
\label{secondtriangle}
T_2 \to T_1 \to X_1 \to T_2[1],
\end{equation}
and iterate in this fashion.
Then put
\[
T_{\prescript{\perp_{\geq 0}}{}{\X}} = \holim T_i \text{ and } T_{\X} = \Cone[ T_{\prescript{\perp_{\geq 0}}{}{\X}} \to T].
\]
It remains for us to show that these objects $T_{\prescript{\perp_{\geq 0}}{}{\X}}$ and $T_{\X}$ belong to the subcategories $\prescript{\perp_{\geq 0}}{}{\X}$ and $(\prescript{\perp_{\geq 0}}{}{\X})^{\perp_{\geq 0}}$, respectively.

It follows by construction that each map in the sequence
\[ \T(T_0, \X[i]) \to \T(T_1, \X[i]) \to \T(T_2, \X[i]) \to \cdots \]
vanishes for all positive \( i \). Indeed, for any map $T_j\to X'[i]$ with $X'$ in $\X$, the composition $T_{j+1} \to T_j \to X'[i]$ is zero since the map $T_j \to X'[i]$ factors through $X_j$ and by the triangle $(\ref{firsttriangle})$ the composition $T_{j+1} \to T_j \to X_j$ is zero. This implies that the above sequences are dual Mittag-Leffler and have vanishing colimit. Since the objects in $\X$ are $0$-cocompact, it follows that $T_{\prescript{\perp_{\geq 0}}{}{\X}} = \holim T_i$ lies in $\prescript{\perp_{\geq 0}}{}{\X}$.

The proof that $T_{\X} \in (\prescript{\perp_{\geq 0}}{}{\X})^{\perp_{\geq 0}}$ is almost verbatim the same as steps (ii) to (v) in the proof of \cite[Proposition~4.5]{MR2927802}. This completes the proof of (i).

The fact that the pair in (ii) is a stable t-structure follows immediately from (i): Either apply (i) to the set of \(0\)-cocompacts \( \X[\leq 0] = \{ X[i] \mid X \in \X \text{ and } i \leq 0 \} \), or replace any occurrence of \( X[\geq 0] \) in the proof of (i) by \( X[\mathbb{Z}] = \{ X[i] \mid X \in \X \text{ and } i \in \mathbb{Z} \} \).

For the proof of the final claim let us denote by $\widehat{\X}$ the smallest triangulated subcategory of $\T$ containing $\X$ and closed under products. It is immediate that $\widehat{\X} \subseteq (\prescript{\perp}{}{\X})^{\perp}$, since the latter category it triangulated, contains $\X$, and is closed under products.

To see the converse inclusion, if suffices to show that $T_{\X} \in \widehat{\X}$ for any $T \in \T$. We write $Y_i$ for the cone of the composed map $T_i \to T$. In particular $Y_0 = 0$ and $Y_1 = X_0$. By the octahedral axiom the $Y_i$ also appear (iteratively) in triangles
\[
X_i \to Y_{i+1} \to Y_i \to X_i[1],
\]
and in particular $Y_i$ is in the smallest triangulated subcategory containing $\X$.

Now we take the product of all the triangles defining the $Y_i$, that is the triangle
\[ \prod_{i = 0}^{\infty} T_i \to \prod_{i=0}^{\infty} T \to \prod_{i=0}^{\infty} Y_i \to \prod_{i = 0}^{\infty} T_i [1]. \]
Observe that the solid square in the following diagram commutes, where the vertical maps between products are the ones defining the homotopy limits. (That is the left vertical map is $1 - s$, where $s$ is given by the morphisms connecting the $T_i$, and the middle vertical map is $1-s'$, where $s'$ just sends each factor $T$ identically to the previous one.). Hence, by the triangulated $3 \times 3$-lemma the diagram may be completed to a commutative one with triangles in all rows and columns.
\[ \begin{tikzpicture}
 \matrix (M) [matrix of math nodes, row sep=2em, column sep=2em, text height=1.5ex, text depth=0.25ex] {
   T_{\prescript{\perp_0}{}{\X}} & T & T_{\X} & T_{\prescript{\perp_0}{}{\X}}[1] \\
   \prod_{i = 0}^{\infty} T_i  & \prod_{i=0}^{\infty} T &  \prod_{i=0}^{\infty} Y_i &  \prod_{i = 0}^{\infty} T_i [1] \\
   \prod_{i = 0}^{\infty} T_i  & \prod_{i=0}^{\infty} T &  \prod_{i=0}^{\infty} Y_i &  \prod_{i = 0}^{\infty} T_i [1] \\
   T_{\prescript{\perp_0}{}{\X}}[1] & T[1] & T_{\X}[1] \\
  };
  \draw [dashed,->] (M-1-1) to (M-1-2);
  \draw [dashed,->] (M-1-2) to (M-1-3);
  \draw [dashed,->] (M-1-3) to (M-1-4);
  \draw [->] (M-2-1) to (M-2-2);
  \draw [->] (M-2-2) to (M-2-3);
  \draw [->] (M-2-3) to (M-2-4);
  \draw [->] (M-3-1) to (M-3-2);
  \draw [->] (M-3-2) to (M-3-3);
  \draw [->] (M-3-3) to (M-3-4);
  \draw [dashed,->] (M-4-1) to (M-4-2);
  \draw [dashed,->] (M-4-2) to (M-4-3);
  \draw [->] (M-1-1) to (M-2-1);
  \draw [->] (M-1-2) to (M-2-2);
  \draw [->,dashed] (M-1-3) to (M-2-3);
  \draw [->] (M-1-4) to (M-2-4);
  \draw [->] (M-2-1) to node [left] {$1-s$} (M-3-1);
  \draw [->] (M-2-2) to node [left] {$1-s'$} (M-3-2);
  \draw [->,dashed] (M-2-3) to (M-3-3);
  \draw [->] (M-2-4) to (M-3-4);
  \draw [->] (M-3-1) to (M-4-1);
  \draw [->] (M-3-2) to (M-4-2);
  \draw [->,dashed] (M-3-3) to (M-4-3);
\end{tikzpicture} \]
Note that the triangle $T \to \prod_{i=0}^{\infty} T \overset{1-s'}{\to} \prod_{i=0}^{\infty} T \to T[1]$ splits. Therefore the first dashed horizontal map is the natural map $\holim T_i \to T$, and the top triangle is the one defining $T_{\X}$. Now the vertical dashed triangle shows that $T_{\X}$ is an extension of $\prod_{i = 0}^{\infty}Y_i[-1]$ and $\prod_{i = 0}^{\infty}Y_i$, hence in $\widehat{\X}$.
\end{proof}

\begin{remark}
There is an arguably more conceptual proof of (ii) above in case $\T$ has an enhancement: Assume $\T$ sits at the base of a stable derivator. Then one may consider the triangles
\[ T_i \to T \to Y_i \to T_i \]
and obtain a new triangle taking the degree-wise homotopy limit
\[ \holim T_i \to T \to \holim Y_i \to \holim T_i [1]. \]
This is true by dual of \cite[Proposition A.5, Corollary A.6]{MR3031826} --- note that one may have to adjust the maps connecting the $Y_i$. Now $\holim Y_i$ is easily seen to lie in $\widehat{\X}$.
\end{remark}

We now provide examples of $0$-cocompact objects.

\begin{theorem} \label{theorem.is_0_cocomp}
Let $\Lambda$ be an algebra over a commutative artinian ring $k$ and denote by $D = \Hom_{k}(-, E)$ the standard duality. For any finite complex of finitely generated left $\Lambda$-modules $X$, the complex $DX$ of right $\Lambda$-modules is $0$-cocompact in $\K(\Modules \Lambda)$.
\end{theorem}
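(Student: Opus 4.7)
The plan is to reduce the claim, via the tensor--hom adjunction, to a computation of cohomology of a homotopy limit in $\K(\Modules k)$, where classical Mittag-Leffler reasoning applies.

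First, I would exploit the natural isomorphism of complexes
\[ \Hom_\Lambda(W, DX) \cong D(W \otimes_\Lambda X), \]
valid for any complex $W$ of right $\Lambda$-modules, to translate the problem. Since $D = \Hom_k(-,E)$ is exact, this gives $\K(\Modules\Lambda)(W, DX[n]) \cong DH^{-n}(W \otimes_\Lambda X)$; and because $E$ is a faithful cogenerator of $\Modules k$, proving $\K(\holim Y_i, DX) = 0$ is equivalent to proving $H^0(\holim Y_i \otimes_\Lambda X) = 0$.

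Next, since $X$ is a finite complex of finitely generated (hence, in the Artin algebra situation which the subsequent corollary has in mind, finitely presented) left $\Lambda$-modules, the functor $- \otimes_\Lambda X$ commutes with products. Consequently, tensoring the defining triangle of $\holim Y_i$ with $X$ yields a triangle
\[ \holim Y_i \otimes_\Lambda X \to \prod(Y_i \otimes_\Lambda X) \xrightarrow{1-s} \prod(Y_i \otimes_\Lambda X) \]
in $\K(\Modules k)$. Since products in $\Modules k$ are exact, the resulting long exact sequence in cohomology collapses into the Milnor-type short exact sequence
\[ 0 \to {\varprojlim}^1 H^{-1}(Y_i \otimes_\Lambda X) \to H^0(\holim Y_i \otimes_\Lambda X) \to \varprojlim H^0(Y_i \otimes_\Lambda X) \to 0. \]

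It thus suffices to show that both outer terms vanish, which is where the two hypotheses enter. For the right-hand term, a nonzero compatible family $(a_i) \in \varprojlim H^0(Y_i \otimes X)$ with $a_{i_0} \neq 0$ would, by the cogenerator property of $E$, admit a nonzero $\phi \colon H^0(Y_{i_0} \otimes X) \to E$; but $\colim_i DH^0(Y_i \otimes X) = 0$ forces some pullback of $\phi$ along $H^0(Y_j \otimes X) \to H^0(Y_{i_0} \otimes X)$ to vanish, contradicting $\phi(a_{i_0}) \neq 0$ via compatibility. For the left-hand term, setting $B_i = H^{-1}(Y_i \otimes X)$ and identifying $\ker(DB_i \to DB_j)$ with $D(B_i / \Imm(B_j \to B_i))$, the faithful exactness of $D$ turns the dual Mittag-Leffler hypothesis on the direct system $(DB_i)$ into the classical Mittag-Leffler condition on the inverse system $(B_i)$, whence $\varprojlim^1 B_i = 0$ by the standard result.

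The most delicate point, and where I expect the main bookkeeping obstacle, is making this last translation precise: showing that the ascending chain of kernels $\ker(DB_i \to DB_j) \subseteq DB_i$ stabilizes precisely when the descending chain of images $\Imm(B_j \to B_i) \subseteq B_i$ does. Once this equivalence is established, the argument concludes by faithfulness of $D$ applied to the vanishing of the middle term of the Milnor sequence.
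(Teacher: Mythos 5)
Your proposal is correct and follows essentially the same route as the paper: translate via $\Hom_\Lambda(-,DX)\cong D(-\otimes_\Lambda X)$, use exactness of $D$ and commutation with products (where, as you correctly note in passing, one really wants the terms of $X$ to be finitely \emph{presented}, which holds in the noetherian case targeted by the corollary), and then control both ends of the resulting Milnor-type exact sequence by dualizing the $\colim$ condition and by converting the dual Mittag--Leffler hypothesis into the ordinary Mittag--Leffler condition on the system of homologies. The only difference from the paper's write-up is cosmetic: the paper argues directly with the five-term exact sequence coming from the homotopy-limit triangle rather than naming the $\varprojlim^1$ short exact sequence, and it packages both reductions through the single functor $F=\HH^0(-\otimes_\Lambda^{\tot}X)$, but the content is identical, including the ``delicate point'' you isolate about matching kernels of $D(-)$ with cokernels/images under the duality.
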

\begin{proof}
We denote by $F$ the functor given by taking zeroth homology of the total tensor product with $X$, i.e.\ $F = \HH^0( - \otimes_{\Lambda}^{\tot} X)$, and observe that
\begin{align*}
\Hom_{\K}(-, DX) & = \HH^0( \HOM_{\Lambda}( -, DX)) = \HH^0( D(- \otimes_{\Lambda}^{\tot} X) ) \\
& = D \HH^0 (- \otimes_{\Lambda}^{\tot} X) = D F.
\end{align*}
Moreover, since $X$ is finitely generated, the tensor product and hence also $F$ commute with products.
%
%

After these preparations we are ready to verify the $0$-cocompactness of $DX$. Let
\[ \cdots \to T_2 \to T_1 \to T_0 \]
be a sequence of morphisms and objects in $\K(\Modules \Lambda)$ such that
\[ \Hom_{\K}(T_0, DX[1]) \to \Hom_{\K}(T_1, DX[1]) \to \Hom_{\K}(T_2, DX[1]) \to \cdots \]
is dual Mittag-Leffler and that \( \colim \Hom_{\K}(T_i, DX) = 0 \).

Dualizing this second assumption, and using the fact that $D$ turns colimits into limits, we obtain that
\[ \cdots \to D \Hom_{\K}(T_2, DX) \to D \Hom_{\K}(T_1, DX) \to D \Hom_{\K}(T_0, DX) \]
has vanishing limit, i.e.\ $\limit D^2 F(T_i) = 0$. Since limits are left exact it follows that also $\limit F(T_i) = 0$.  In particular the morphism
\[  \prod F(T_i) \xrightarrow{1 - (f_i)} \prod F(T_i) \]
is monic.

On the other hand, note that the sequence in the first assumption is the dual of
\[ \cdots \to F(T_2[-1]) \to F(T_1[-1]) \to F(T_0[-1]). \]
It follows that the latter sequence satisfies the Mittag-Leffler condition, and in particular that $\limit^1 F(T_i[-1])=0$. Thus the map
\[ \prod F(T_i[-1]) \xrightarrow{1 - (f_i)} \prod F(T_i[-1]) \]
is epic.

Since $F$ commutes with products, the exact sequence
\[  F(\prod T_i[-1]) \to F(\prod T_i[-1])  \to F(\holim T_i) \to  F(\prod T_i) \to F(\prod T_i) \]
starts with an epimorphism and ends with a monomorphism, showing that the middle term vanishes. Thus
\[ \Hom_{\K}(\holim T_i, DX) = D F(\holim T_i) = 0. \qedhere \]
\end{proof}

\begin{remark}
In the above theorem, it is not relevant that $\Lambda$ is concentrated in degree $0$, i.e.\ the theorem still holds for $\Lambda$ a dg $k$-algebra.
\end{remark}

However, here we are actually most interested in an even more special case:

\begin{corollary} \label{corollary.artinina_0-cocompact}
Let $\Lambda$ be an Artin algebra. Then any finite complex of finitely generated $\Lambda$-modules is $0$-cocompact in $\K(\Modules \Lambda)$.
\end{corollary}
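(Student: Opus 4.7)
The plan is to deduce the corollary directly from Theorem~\ref{theorem.is_0_cocomp} by invoking the standard duality of the Artin algebra $\Lambda$. Recall that for an Artin $k$-algebra, the functor $D = \Hom_k(-, E)$ restricts to a duality between finitely generated left and right $\Lambda$-modules, and the double dual $D^2$ is naturally isomorphic to the identity on such modules. These properties extend termwise to finite complexes of finitely generated modules.

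Given a finite complex $Y$ of finitely generated right $\Lambda$-modules, I would set $X = DY$. Then $X$ is a finite complex of finitely generated left $\Lambda$-modules, so Theorem~\ref{theorem.is_0_cocomp} applies to $X$ and yields that $DX$ is $0$-cocompact in $\K(\Modules \Lambda)$. Since $DX = D(DY) \cong Y$ by the double-duality isomorphism (applied termwise), we conclude that $Y$ itself is $0$-cocompact.

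The argument is essentially a one-line reduction; the only thing to check is that Theorem~\ref{theorem.is_0_cocomp} is formulated symmetrically enough to cover both sides, which it is (an Artin algebra being an algebra over a commutative artinian ring $k$, so the theorem applies to $\Lambda^{\op}$ as well if one ever needed to switch sides). No genuine obstacle appears here; the work was already done in establishing Theorem~\ref{theorem.is_0_cocomp}.
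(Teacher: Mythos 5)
Your proof is correct and matches the paper's own argument: the paper deduces the corollary from Theorem~\ref{theorem.is_0_cocomp} by the single observation that the duals of finitely generated left $\Lambda$-modules are exactly the finitely generated right $\Lambda$-modules, which is precisely your $D^2\cong\mathrm{id}$ reduction. The aside about whether one might need to invoke the opposite algebra is unnecessary (Theorem~\ref{theorem.is_0_cocomp} already applies as stated), but it does no harm.
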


\begin{proof}
This follows directly from Theorem~\ref{theorem.is_0_cocomp}, using only the oberservation that the duals of finitely generated left $\Lambda$-modules are precisely the finitely generated right $\Lambda$-modules.
\end{proof}

What is more, for Artin algebras, Corollary~\ref{prop:explicitRforArtin} below says that each functor $\RR$ appearing in Diagram~\ref{bigpicture} in the Summary of Section~\ref{sec:bigcat} is covered by the explicit construction in (the proof of) Theorem~\ref{theorem.t_from_left_perp}. As usual, $D_\Lambda$ denotes the dualizing complex for $\Lambda$ --- see Example~\ref{ex:dualizingforartin}. 

By Propostion~\ref{prop:leftperpforartin}, the relevant categories are given as left orthogonal to certain objects. Our first task is to make sure that these objects are $0$-cocompact.

\begin{lemma} \label{lem:four0cocompactsforArtin}
Let $\Lambda$ be an Artin algebra. Then
\begin{enumerate}[label=\emph{(\roman*)}]
\item the complex $\Lambda$ is $0$-cocompact in $\K(\Proj \Lambda)$;
\item the complex $D_\Lambda$ is $0$-cocompact in $\K(\Inj \Lambda)$;
\item the complex $\rho D_\Lambda$ is $0$-cocompact in $\K(\Proj \Lambda)$;
\item the complex $D_\Lambda \otimes_\Lambda^{\mathbb L} D_\Lambda$ is $0$-cocompact in $\K(\Inj \Lambda)$.
\end{enumerate}
\end{lemma}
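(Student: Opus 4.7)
The plan is to reduce all four claims to the \(0\)-cocompactness of bounded complexes of finitely generated \(\Lambda\)-modules in \(\K(\Modules \Lambda)\), which is Corollary~\ref{corollary.artinina_0-cocompact}. The observation underpinning the whole argument is that the inclusions
\(
\K(\Proj \Lambda) \hookrightarrow \K(\Modules \Lambda)
\)
and
\(
\K(\Inj \Lambda) \hookrightarrow \K(\Modules \Lambda)
\)
both preserve products, and hence homotopy limits. For injectives this is automatic, since \(\Hom_\Lambda(-,\prod I_\alpha) = \prod \Hom_\Lambda(-,I_\alpha)\) is a product of exact functors, and products are exact in \(\Modules \Lambda\). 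For projectives this is the only mildly technical point: an Artin algebra is both left coherent and right perfect, so by Chase's theorem products of flat right modules are flat, and by Bass' theorem flat right modules are projective, yielding that products of projectives are projective.

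Granted this, (i) and (ii) are immediate. Indeed, \(\Lambda\) and \(D_\Lambda\) are stalk complexes of finitely generated modules and are therefore \(0\)-cocompact in \(\K(\Modules \Lambda)\) by Corollary~\ref{corollary.artinina_0-cocompact}; every Hom-group and homotopy limit entering the definition of \(0\)-cocompactness in \(\K(\Proj \Lambda)\) or \(\K(\Inj \Lambda)\) is inherited unchanged from the ambient category, so the property transfers.

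For (iii) the plan is to pass through the adjunction \((\iota,\rho)\) of Subsection~\ref{subsection:projres}. Given any sequence \(\cdots \to P_1 \to P_0\) in \(\K(\Proj \Lambda)\), adjunction supplies natural isomorphisms
\[
\Hom_{\K(\Proj \Lambda)}(P_i, \rho D_\Lambda[k]) \cong \Hom_{\K(\Modules \Lambda)}(P_i, D_\Lambda[k])
\]
for every \(k\), so the dual Mittag-Leffler and colimit-vanishing conditions with respect to \(\rho D_\Lambda\) in \(\K(\Proj \Lambda)\) translate verbatim into the analogous conditions with respect to \(D_\Lambda\) in \(\K(\Modules \Lambda)\). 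Since the homotopy limit coincides in the two categories by the preliminary observation, the \(0\)-cocompactness of \(D_\Lambda\) established in (ii) now forces \(\Hom_{\K(\Proj\Lambda)}(\holim P_i,\rho D_\Lambda) = \Hom_{\K(\Modules\Lambda)}(\holim P_i, D_\Lambda) = 0\).

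Finally, (iv) should follow from (iii) by transport along the Iyengar--Krause equivalence \(T = -\otimes_\Lambda D_\Lambda \colon \K(\Proj \Lambda) \to \K(\Inj \Lambda)\) of Theorem~\ref{thm:iyengarkrause}. Since \(\rho D_\Lambda\) is a complex of projectives, \(T(\rho D_\Lambda) = (\rho D_\Lambda) \otimes_\Lambda D_\Lambda = D_\Lambda \otimes_\Lambda^{\mathbb L} D_\Lambda\), and any triangle equivalence between categories with products preserves Hom-groups, products, and homotopy limits, hence preserves \(0\)-cocompactness. The only non-formal point in the whole plan is the product-preservation for projective modules; everything else is bookkeeping with the adjunction \((\iota,\rho)\) and the equivalence \(T\).
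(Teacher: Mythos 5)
Your proof is correct and follows essentially the same route as the paper: (i) and (ii) are immediate once one observes that $\K(\Proj\Lambda)$ and $\K(\Inj\Lambda)$ are full, product-closed subcategories of $\K(\Modules\Lambda)$; (iii) transfers the dual Mittag-Leffler and colimit conditions along the adjunction isomorphism $\Hom_{\K(\Proj\Lambda)}(-,\rho D_\Lambda[k])\cong\Hom_{\K(\Modules\Lambda)}(-,D_\Lambda[k])$ and uses product-closure of $\Proj\Lambda$ to identify the homotopy limits; and (iv) transports (iii) along the equivalence $-\otimes_\Lambda D_\Lambda$. One small imprecision worth fixing: in step (iii) you invoke ``the $0$-cocompactness of $D_\Lambda$ established in (ii),'' but since $\holim P_i$ lives in $\K(\Proj\Lambda)\subset\K(\Modules\Lambda)$ rather than in $\K(\Inj\Lambda)$, what your argument actually uses is the $0$-cocompactness of $D_\Lambda$ in $\K(\Modules\Lambda)$, i.e.\ Corollary~\ref{corollary.artinina_0-cocompact} directly, which is precisely what the paper cites.
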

\begin{proof}
(i) and (ii) are immediate from Corollary~\ref{corollary.artinina_0-cocompact}. In order to show (iii), let
\[
 \cdots \to X_2 \to X_1 \to X_0
\]
be a sequence in $\K(\Proj \Lambda)$ such that the induced sequence
\[
\Hom_{\K(\Proj\Lambda)}(X_0, \rho D_\Lambda[1]) \to \Hom_{\K(\Proj\Lambda)}(X_1, \rho D_\Lambda[1]) \to \cdots 
\]
is dual Mittag-Leffler and $\colim\Hom_{\K(\Proj\Lambda)}(X_i, \rho D_\Lambda)=0$.  
Since $\rho$ is right adjoint to the inclusion $\K(\Proj\Lambda)\hookrightarrow\K(\Modules \Lambda)$, the sequence
\[
\Hom_{\K(\Modules\Lambda)}(X_0, D_\Lambda[1]) \to \Hom_{\K(\Modules\Lambda)}(X_1, D_\Lambda[1]) \to \cdots 
\]
is again dual Mittag-Leffler and the colimit of $\Hom_{\K(\Modules\Lambda)}(X_i, D_\Lambda)$ vanishes.
The $0$-cocompactness of $D_\Lambda$ in $\K(\Modules \Lambda)$ thus implies the $0$-cocompactness of $\rho D_\Lambda$ in $\K(\Proj \Lambda)$. Indeed,
\[
\Hom_{\K(\Proj \Lambda)}(\holim X_i, \rho D_\Lambda) \cong \Hom_{\K(\Modules \Lambda)}(\holim X_i, D_\Lambda) =0
\]
since $\Proj \Lambda$ is closed under products. Claim (iv) now follows from the fact that the equivalence $-\otimes_\Lambda D_\Lambda \colon\K(\Proj\Lambda)\to\K(\Inj\Lambda)$ preserves $0$-cocompactness.
\end{proof}


As in Corollary~\ref{rem:explicitL}, we consider the perpendiculars to these objects with respect to all extensions, that is we consider the stable t-structures $(\prescript{\perp}{}{\X}, (\prescript{\perp}{}{\X})^{\perp})$. We obtain the following descriptions of the right adjoints.

\begin{corollary}
\label{prop:explicitRforArtin}
Let $\Lambda$ be an Artin algebra. Then
\begin{enumerate}[label=\emph{(\roman*)}]
\item the right adjoint of the inclusion $\K_{\coac}(\Proj \Lambda) \hookrightarrow \K(\Proj \Lambda)$ is given by
\[
T \mapsto T_{\prescript{\perp}{}{\left(\Lambda\right)}};
\]
\item the right adjoint of the inclusion $\K_{\ac}(\Inj \Lambda) \hookrightarrow \K(\Inj \Lambda)$ is given by
\[
T \mapsto T_{\prescript{\perp}{}{\left(D_\Lambda\right)}};
\]
\item the right adjoint of the inclusion $\Ktac(\Proj \Lambda) \hookrightarrow \K(\Proj \Lambda)$ is given by
\[
T \mapsto T_{\prescript{\perp}{}{\left(\Lambda\oplus(\rho D_\Lambda) \right)}};
\]
\item the right adjoint of the inclusion $\Ktac(\Inj \Lambda) \hookrightarrow \K(\Inj \Lambda)$ is given by
\[
T \mapsto T_{\prescript{\perp}{}{\left(D_\Lambda\oplus(D_\Lambda\otimes_\Lambda^{\mathbb L}D_\Lambda)\right)}}.
\]
\end{enumerate}
\end{corollary}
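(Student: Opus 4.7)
The plan is to read the statement off directly from our previous results, with no real work beyond bookkeeping. First I would use Proposition~\ref{prop:leftperpforartin} to identify each of the four target subcategories as a left perpendicular class: for (i) we have $\K_{\coac}(\Proj\Lambda) = \prescript{\perp}{}{(\Lambda)}$, for (ii) $\K_{\ac}(\Inj\Lambda) = \prescript{\perp}{}{(D_\Lambda)}$, and for (iii) and (iv) I would combine the identities in Proposition~\ref{prop:leftperpforartin} using $\prescript{\perp}{}{\X} \cap \prescript{\perp}{}{\Y} = \prescript{\perp}{}{(\X \cup \Y)}$ to obtain
\[ \Ktac(\Proj\Lambda) = \prescript{\perp}{}{\{\Lambda, \rho D_\Lambda\}} \quad\text{and}\quad \Ktac(\Inj\Lambda) = \prescript{\perp}{}{\{D_\Lambda, D_\Lambda \otimes_\Lambda^{\mathbb L} D_\Lambda\}}. \]
The shorthand $\prescript{\perp}{}{(A \oplus B)}$ used in the statement is just another name for $\prescript{\perp}{}{\{A,B\}}$.

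Next I would invoke Lemma~\ref{lem:four0cocompactsforArtin} to certify that each of the four objects $\Lambda$, $D_\Lambda$, $\rho D_\Lambda$, $D_\Lambda \otimes_\Lambda^{\mathbb L} D_\Lambda$ is $0$-cocompact in the respective ambient homotopy category ($\K(\Proj\Lambda)$ or $\K(\Inj\Lambda)$).

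With these two inputs in hand, the final step is to apply Theorem~\ref{theorem.t_from_left_perp}(ii) with $\X = \{\Lambda\}$, $\{D_\Lambda\}$, $\{\Lambda, \rho D_\Lambda\}$, $\{D_\Lambda, D_\Lambda \otimes_\Lambda^{\mathbb L} D_\Lambda\}$ in turn. In each case the theorem produces a stable $t$-structure $\bigl(\prescript{\perp}{}{\X}, (\prescript{\perp}{}{\X})^{\perp}\bigr)$, whose left aisle is exactly the subcategory targeted by the inclusion in question. The right adjoint of the inclusion $\prescript{\perp}{}{\X} \hookrightarrow \T$ is then the functor that picks out the $\prescript{\perp}{}{\X}$-part of the functorial triangle from the proof of Theorem~\ref{theorem.t_from_left_perp}, namely $T \mapsto T_{\prescript{\perp}{}{\X}}$; this is precisely the formula stated.

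I expect no serious obstacle: all the real content lies in Proposition~\ref{prop:leftperpforartin}, Lemma~\ref{lem:four0cocompactsforArtin} and Theorem~\ref{theorem.t_from_left_perp}. The only point worth making explicit is that Theorem~\ref{theorem.t_from_left_perp} is stated for a \emph{set} of $0$-cocompact objects, so items (iii) and (iv) need no separate verification that a direct sum of two $0$-cocompact objects is again $0$-cocompact --- we simply feed in the two-element set.
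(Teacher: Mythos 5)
Your proposal is correct and matches the paper's own argument: identify the four subcategories as left perpendicular classes via Proposition~\ref{prop:leftperpforartin}, verify $0$-cocompactness via Lemma~\ref{lem:four0cocompactsforArtin}, then read the right adjoint off the stable $t$-structure furnished by Theorem~\ref{theorem.t_from_left_perp}(ii). The only cosmetic remark is that you explicitly unwind $\prescript{\perp}{}{(A\oplus B)} = \prescript{\perp}{}{\{A,B\}}$, which the paper leaves silent, but this is a triviality of the $\Hom$-orthogonal and does not change the argument.
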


Observe that if $\Lambda$ is an Artin algebra, then the right $\GProj \Lambda$-approximations in $\Modules\Lambda$ of Corollary~\ref{cor:GProjcontfinite} are also explicitly described by Proposition~\ref{prop:explicitRforArtin}. Indeed, a right $\GProj \Lambda$-approximation of $M$ appears as $\Cok(X^{-1}\to X^0) \to M$ for a right $\Ktac(\Proj \Lambda)$-approximation $X\to \rho M$ of $\rho M$.

\section*{Appendix}

We provide the two proofs that are missing from Section~\ref{section:preliminaries}.

\begin{lemmaappendix1}
   Let $\T$ and $\T'$ be triangulated categories with coproducts and suppose $\T$ is compactly generated. Let $F \colon\T\to\T'$ be exact and coproduct preserving. If $F$ restricts to a fully faithful functor $\T^{\cc}\to \T'^{\cc}$, then $F$ is fully faithful.
\end{lemmaappendix1}

\begin{proof}
   Suppose $F$ restricts to a fully faithful functor $\T^{\cc}\to\T'^{\cc}$. Fix $X \in \T^{\cc}$ and let
   \[
   X_{\ast} = \{B \in \T \vert \,F \colon \T (X,\Sigma^i B) \to \T' (F X, F \Sigma^i B)\text{ is bijective } \forall \text{ } i \in \mathbb Z\}.
   \]
   First, observe that $X_{\ast}$ is a triangulated subcategory of $\T$. Indeed, given a triangle
   \[
   Y \to Y' \to Y'' \to \Sigma Y
   \]
   in $\T$ with $Y, Y'' \in X_{\ast}$, we get the triangle
   \[
   F Y \to F Y' \to F Y'' \to \Sigma F Y
   \]
   in $\T'$ and hence the following diagram with exact rows, revealing $Y' \in X_{\ast}$.
   \begin{center}
   \begin{tikzpicture}[baseline=(current bounding box.center)]
      \matrix(a)[matrix of math nodes,
      row sep=2em, column sep=2em,
      text height=1.5ex, text depth=0.25ex]
      {\cdots & \T(X,Y) & \T(X,Y') & \T(X,Y'') & \cdots  \\
      \cdots & \T'(F X,F Y) & \T'(F X,F Y') & \T'(F X,F Y'') & \cdots   \\};
      \path[->,font=\scriptsize](a-1-1) edge (a-1-2)
      (a-1-2) edge (a-1-3)
      (a-1-3) edge (a-1-4)
      (a-1-4) edge (a-1-5)

      (a-2-1) edge (a-2-2)
      (a-2-2) edge (a-2-3)
      (a-2-3) edge (a-2-4)
      (a-2-4) edge (a-2-5)

      (a-1-2) edge node[right]{$\cong$}(a-2-2)
      (a-1-3) edge (a-2-3)
      (a-1-4) edge node[right]{$\cong$}(a-2-4);
   \end{tikzpicture}
   \end{center}
   Second, $X_{\ast}$ is closed under coproducts. Namely, for each family $\left(B_i \vert\,i\in I\right)$ of objects in $X_{\ast}$ we have
   \[
   \T(X, \coprod_{i\in I} B_i) \cong  \coprod_{i\in I}\T(X,B_i)
   \]
   since $X$ is compact, while
   \[
   \T'(F X, F \coprod_{i\in I} B_i) \cong \T'(F X, \coprod_{i\in I} F B_i) \cong \coprod_{i\in I} \T'(F X, F B_i)
   \]
   as $F$ preserves coproducts and $F X$ is compact, and hence $\coprod_{i\in I} B_i \in X_{\ast}$. Since $\T^{\cc}\subset X_{\ast}$ by assumption, this implies $X_{\ast}=\T$.

   Fix now an arbitrary $B \in \T$ and consider the subcategory
   \[
   B^{\ast} = \{A \in \T \vert \, F \colon \T (\Sigma^i A,B) \to \T' (F \Sigma^i A, F B)\text{ is bijective } \forall \text{ } i \in \mathbb Z \}.
   \]
   We claim that $B^{\ast}=\T$, which would clearly suffice. As above, it suffices to show that $B^{\ast}$ contains $\T^{\cc}$ and is closed under extensions and coproducts. First of all, the above obtained $X_{\ast} = \T$ for each compact $X$ means that $\T^{\cc} \subset B^{\ast}$. Further, $B^{\ast}$ is closed under extensions by an argument similar to the above. Finally, $B^{\ast}$ is closed under coproducts, since for each family $\left( A_i \vert i\in I\right)$ of objects in $B^{\ast}$, we have
   \[
   \T(\coprod_{i\in I} A_i, B) \cong \prod_{i\in I} \T( A_i, B)
   \]
   and on the other hand
   \[
   \T'(F \coprod_{i\in I} A_i, F B) \cong \T'(\coprod_{i\in I} F A_i, F B) \cong \prod_{i\in I} \T'( F A_i, F B). \qedhere
   \]
\end{proof}

\begin{lemmaappendix2}
If $\Lambda$ is an Artin algebra, then the natural monomorphism $\Lambda^{(I)} \to \Lambda^I$ is split for any index set $I$.
\end{lemmaappendix2}
\begin{proof}
We first recall a few notions. Let $R$ be a ring. A monomorphism $X \to Y$ in $\Modules R$ is called \emph{pure} if $X \otimes_R N \to Y \otimes_R N$ remains a monomorphism for each finitely presented left $\Lambda$-module $N$. An $R$-module $M$ is called \emph{pure-injective} if it is injective with respect to pure monomorphisms, i.e.\ given a pure monomophism $f\colon X\to Y$ in $\Modules R$, each morphism $g\colon X\to M$ factors through $f$:
\begin{center}
\begin{tikzpicture}[baseline=(current bounding box.center)]
   \matrix(a)[matrix of math nodes,
   row sep=2em, column sep=2em,
   text height=1.5ex, text depth=0.25ex]
   {X & Y \\
   M & \\};
   \path[->,font=\scriptsize](a-1-1) edge node[above]{$f$}(a-1-2)
   (a-1-1) edge node[left]{$g$}(a-2-1);
   \path[dotted,->,font=\scriptsize](a-1-2) edge (a-2-1);
\end{tikzpicture}
\end{center}

Observe now that the natural monomorphism $\mu_\Lambda\colon \Lambda^{(I)} \to \Lambda^I$ is in fact pure for any indexing set $I$. Indeed, let $N$ be a finitely presented left $R$-module. Then $\Lambda^I \otimes_{\Lambda} N  = N^I$, and we always have $\Lambda^{(I)} \otimes_{\Lambda} N = N^{(I)}$. Hence the induced $\mu_\Lambda\otimes N \colon \Lambda^{(I)}\otimes_\Lambda N \to \Lambda^I \otimes_\Lambda N$ is nothing but the monomorphism $\mu_N \colon N^{(I)} \to N^I$. Since $\Lambda$ is an Artin algebra it has finite length when viewed as a module over its endomorphism ring, which means that $\Lambda^{(I)}$ is pure-injective --- see e.g.\ \cite{MR2792229}.  In particular this means that we have
 \begin{center}
 \begin{tikzpicture}[baseline=(current bounding box.center)]
    \matrix(a)[matrix of math nodes,
    row sep=2em, column sep=2em,
    text height=1.5ex, text depth=0.25ex]
    {\Lambda^{(I)} & \Lambda^I \\
    \Lambda^{(I)} & \\};
    \path[->,font=\scriptsize](a-1-1) edge node[above]{$\mu_\Lambda$}(a-1-2)
    (a-1-1) edge node[left]{$\id$}(a-2-1);
    \path[dotted,->,font=\scriptsize](a-1-2) edge (a-2-1);
 \end{tikzpicture}
 \end{center}
i.e.\ a splitting of $\mu_\Lambda$.
\end{proof}

\bibliographystyle{amsplain}
\bibliography{paperiii}
\end{document}